\documentclass[a4paper,11pt]{amsart}

\pdfoutput=1

\usepackage[text={400pt,660pt},centering]{geometry}

\usepackage{amsthm, amssymb, amsmath, amsfonts, mathrsfs}
\usepackage{mathtools}
\usepackage{scalerel} 
\usepackage{stackrel}
\usepackage[colorlinks=true, pdfstartview=FitV, linkcolor=blue, citecolor=blue, urlcolor=blue,pagebackref=false]{hyperref}
\usepackage{esint} % contains strokedint
\usepackage{MnSymbol} % contains notin inverted (which is \nowns)
\usepackage{booktabs} % nice tables

\usepackage{microtype}
\usepackage{cleveref}

\parskip= 2pt

\setcounter{tocdepth}{2}

%\usepackage[notcite,notref,color]{showkeys}
%\definecolor{labelkey}{gray}{.8}
%\definecolor{refkey}{gray}{.8}

%\definecolor{darkgreen}{rgb}{0,0.5,0}
%\definecolor{darkblue}{rgb}{0,0,0.7}
%\definecolor{darkred}{rgb}{0.9,0.1,0.1}

%\newcommand{\jccomment}[1]{\marginpar{\raggedright\scriptsize{\textcolor{darkgreen}{#1}}}}

%\newcommand{\agcomment}[1]{\marginpar{\raggedright\scriptsize{\textcolor{darkblue}{#1}}}}

%\newcommand{\gucomment}[1]{\marginpar{\raggedright\scriptsize{\textcolor{darkred}{#1}}}}

%\newcommand{\mncomment}[1]{\marginpar{\raggedright\scriptsize{\textcolor{magenta}{#1}}}}

%\newcommand{\jc}[1]{{\color{darkgreen}{#1}}}

%\newcommand{\ag}[1]{{\color{darkblue}{#1}}}

%\newcommand{\gu}[1]{{\color{darkred}{#1}}}

%\newcommand{\mn}[1]{{\color{magenta}{#1}}}

\newtheorem{proposition}{Proposition}
\newtheorem{theorem}[proposition]{Theorem}
\newtheorem{lemma}[proposition]{Lemma}

\theoremstyle{remark}
\newtheorem{remark}[proposition]{Remark}

\theoremstyle{definition}

\numberwithin{equation}{section}
\numberwithin{proposition}{section}
\numberwithin{table}{section}

\renewcommand{\le}{\leqslant}
\renewcommand{\ge}{\geqslant}
\renewcommand{\leq}{\leqslant}
\renewcommand{\geq}{\geqslant}

\renewcommand{\subset}{\subseteq}
\newcommand{\mcl}{\mathcal}

\newcommand{\msc}{\mathscr}
\newcommand{\G}{\mathcal{G}}
\newcommand{\E}{\mathbb{E}}

\renewcommand{\a}{\mathbf{a}}
\newcommand{\aone}{\mathbf{a}^{\{1\}}}
\newcommand{\ab}{{\overbracket[1pt][-1pt]{\a}}}
\newcommand{\ahom}{{\overbracket[1pt][-1pt]{\a}}}

\newcommand{\Ll}{\left}
\newcommand{\Rr}{\right}
\newcommand{\lhs}{left-hand side}
\newcommand{\rhs}{right-hand side}
\newcommand{\1}{\mathbf{1}}
\newcommand{\R}{\mathbb{R}}

\newcommand{\N}{\mathbb{N}}

\renewcommand{\P}{\mathbb{P}}

\renewcommand{\tilde}{\widetilde}
\newcommand{\al}{\alpha}
\newcommand{\de}{\delta}

\renewcommand{\epsilon}{\varepsilon}
\renewcommand{\d}{{\mathrm{d}}}

\newcommand{\cu}{{\scaleobj{1.2}{\square}}}
\newcommand{\vb}{\ \big \vert \ }

\renewcommand{\fint}{\strokedint}
\newcommand{\Rd}{{\mathbb{R}^d}}

\newcommand{\mmd}{\mathcal{M}_\delta}
\DeclareMathOperator{\dist}{dist}
\DeclareMathOperator{\supp}{supp}

%%% 
\newcommand{\cD}{\msc{D}}   % Smooth function in configuration space
\newcommand{\cC}{\msc{C}}   % Smooth function in configuration space
 % Smooth, and finitely generated function in configuration space
\newcommand{\cL}{\msc{L}}   % L2 integrable for Poisson measure
\newcommand{\cH}{\msc{H}}   % Sobolev space in configuration space

\newcommand{\Ind}[1]{\mathbf{1}_{\left\{#1\right\}}}
\newcommand{\id}{\mathsf{Id}}
\newcommand{\norm}[1]{\left\Vert{#1}\right\Vert}

\newcommand{\dbint}[1]{[\![#1]\!]}

%%%
%\newcommand{\red}{\color{red}}

\newcommand{\mres}{\mathbin{\vrule height 1.4ex depth 0pt width
0.13ex\vrule height 0.13ex depth 0pt width 1ex}}

% The averaged sum notation

%\newcommand{\avsum}{\mathop{\mathpalette\avsuminner\relax}\displaylimits}

%\makeatletter
%\newcommand\avsuminner[2]{%
%  {\sbox0{$\m@th#1\sum$}%
%   \vphantom{\usebox0}%
%   \ooalign{%
%     \hidewidth
%     \smash{\rule[.5ex]{1.5ex}{.8pt} \relax}%
%     \hidewidth\cr
%     $\m@th#1\sum$\cr
%   }%
%  }%
%}
%\makeatother

\title[Smoothness of the diffusion coefficients]{Smoothness of the diffusion coefficients for particle systems in continuous space}
\author[A.\ Giunti, C.\ Gu, 
J.-C.\ Mourrat, M.\ Nitzschner]{Arianna Giunti, Chenlin Gu, \\
Jean-Christophe Mourrat, Maximilian Nitzschner}

\address[Arianna Giunti]{Department of Mathematics, Imperial College London, London, United Kingdom}

\address[Chenlin Gu]{DMA, Ecole normale sup\'erieure, PSL University, Paris, France; Mathematics Department, NYU Shanghai \& NYU-ECNU Institute of Mathematical Sciences,  China}

\address[Jean-Christophe Mourrat]{Ecole Normale Sup\'erieure de Lyon and CNRS, Lyon, France; Courant Institute of Mathematical Sciences, New York University, New York NY, USA}

\address[Maximilian Nitzschner]{Courant Institute of Mathematical Sciences, New York University, New York NY, USA}

\begin{document}

\begin{abstract}

For a class of particle systems in continuous space with local interactions, we show that the asymptotic diffusion matrix is an infinitely differentiable function of the density of particles. Our method allows us to identify relatively explicit descriptions of the derivatives of the diffusion matrix in terms of correctors. 

\bigskip

\noindent \textsc{MSC 2010:} 82C22, 35B27, 60K35.

\medskip

\noindent \textsc{Keywords:} interacting particle system, hydrodynamic limit, bulk diffusion matrix.

\end{abstract}
\maketitle

%\tableofcontents

%
%
%
%
%%%%%%%%%%%%%%%%%%%%%%%%%%%%%%%%%%%%%%%%%%%%%%%%%%%%%%%%%%%%%%
%%%%%%%%%%%%%%%%%%%%%%%%%%%%%%%%%%%%%%%%%%%%%%%%%%%%%%%%%%%%%%
%
%
%
%
\section{Introduction}

We study a class of interacting particle systems with local interactions in continuous space. These are systems of interacting Brownian particles, where each particle evolves in $\R^d$ according to a diffusion matrix that depends on the locations of the particles nearby. The models we study are reversible with respect to the Poisson measures with constant density, uniformly elliptic, and of non-gradient type. In the limit of large scales, the evolution of the empirical distribution of particles is expected to be captured by a nonlinear diffusion equation. Such a result is usually called a \emph{hydrodynamic limit}, and the matrix appearing in this diffusion equation is often called the \emph{bulk diffusion matrix}. The qualifier ``bulk'' underlines that this matrix is meant to describe the collective evolution of the cloud of particles. We refer to the monographs \cite{kipnis1998scaling, komorowski2012fluctuations, spohn2012large} for thorough expositions on the topic. 

The purpose of this work is to show that the bulk diffusion matrix is an infinitely differentiable function of the density of particles. That the bulk diffusion matrix is sufficiently regular as a function of the density of particles is a necessary ingredient in the proof of the hydrodynamic limit of the model, see for instance \cite{fuy}. We hope that, when combined with \cite{bulk}, the present work will allow for the establishment of a quantitative version of the statement of hydrodynamic limit.

Similar results on the smoothness of the effective diffusion matrix have already been derived for a number of other models of particle systems \cite{beltran, bernardin,lov-reg,lov-reg2,naga1,naga2,naga3, sued}. Those works all rely on the approach introduced in \cite{lov-reg} to show the regularity of the self-diffusion matrix of a tagged particle in the symmetric simple exclusion process on $\mathbb{Z}^d$. This approach relies on certain duality properties of the process under consideration. 

The approach we employ here seems different and more direct. In particular, we end up with relatively explicit expressions for the derivatives of the bulk diffusion matrix in terms of correctors, which are natural objects that already appear in the description of the bulk diffusion matrix itself. 

Our method takes inspiration from works on the homogenization of elliptic equations with random coefficients \cite{ AL2, AL1, dg1, dl-diff}; see also \cite{almog1, almog2, almog3, BM, kozlov}. One can for instance consider a setting in which the random coefficients of the equation are a local function of a Poisson point process with constant density, and ask whether the homogenized matrix depends smoothly on the density of the point process. This question has been answered positively in \cite[Theorem~5.A.1]{Mitia-thesis}, following the suggestion in \cite[Remark~2.7]{dg1} to rely on precise quantitative estimates on the corrector and the Green function (the results of \cite{dg1} require the perturbative point process to have a uniformly bounded number of points in a given bounded region of space, a property that does not hold for Poisson point processes). These precise estimates are currently not available in the context of interacting particle systems, and we show here that they are not necessary for the proof of smoothness of the homogenized coefficients. 
Outside of the present paper, we are unaware of results concerning interacting particle systems for which the number of particles in a bounded region of space is not uniformly bounded.

In a previous version of this paper, we stated: ``We believe that the method used here could be adapted to the case of elliptic equations and yield a simpler proof of \cite[Theorem~5.A.1]{Mitia-thesis} that does not rely on quantitative homogenization theory.'' That this is indeed the case has been rigorously established recently in~\cite{brusselssprouts}.

It is asserted in~\cite{brusselssprouts} that the problem considered here is a particular case of the corresponding problem for elliptic equations with random coefficients. This is however not the case, and indeed, the two problems have a different mathematical structure. In the context of particle systems, one \emph{cannot} keep the set of all the other particles frozen in place, as the relevant operator must encode the movement of the entire cloud of particles at once. In other words, in the case of elliptic equations with random coefficients, for each fixed realization of the randomness, we can write down the relevant $d$-dimensional equations, which will feature random coefficients. Instead, for interacting particle systems, the relevant equations must be deterministic and $Nd$-dimensional, if the number of particles is fixed at $N$. In particular, the bulk diffusion matrix is \emph{different} from the asymptotic diffusivity matrix of the diffusion in the random environment obtained by freezing the cloud of all the other particles.

One basic difficulty when studying homogenized coefficients is that they are defined as infinite-volume quantities. As in all other works on the topic, we will therefore start by studying a localized version of the homogenized coefficients. In the context of elliptic equations with random coefficients, this is usually achieved by the introduction of a zero-order term in the equation solved by the corrector. Through this procedure, one obtains a localized corrector that is also stationary under the action of translations. 

In the context of particle systems, the correctors are deterministic functions of the cloud of particles. Stationarity thus boils down to invariance under translations, and by the ergodic theorem, a function that is invariant under translations must be constant. A different approach is therefore necessary. We will rely on the finite-volume quantities introduced in \cite{bulk}, which are inspired by the analogous quantities introduced in \cite{armstrong2016quantitative} for PDEs (see also the monograph \cite{AKMbook}, and \cite{informal} for a gentle introduction). As will be explained below, there are two such quantities; in the PDE setting, one relates to the imposition of a constant tangential gradient on the boundary, while the other relates to the imposition of a constant normal flux on the boundary. In the context of particle systems, we will rely on this second ``flux'' quantity for our proofs, and thus first obtain the smoothness of the inverse of the homogenized matrix. Perhaps surprisingly, significant additional technical difficulties seem to arise if one tries to devise an argument that would rely instead on the ``gradient'' quantity; we refer to Subsection~\ref{subsec:elementary} for more on this point.

Once our quantity is localized in finite volume, it becomes smooth and we can compute its derivatives at arbitrarily high order using classical formulas such as \eqref{e.classical.formula} (this formula is called a chaos expansion in \cite{bookPoisson}, and is called a cluster expansion in~\cite{dg1}). 
The difficulty that remains is to find suitable estimates in order to pass to the limit of infinite volume. The estimates we need have a similar form as those appearing in \cite{AL2, AL1, dg1} in the PDE setting; the authors of \cite{dg1} also single out \cite[Proposition~3.4]{anantharaman-thesis} as an important source of inspiration. However, several adaptations are needed, due to the nature of our localization procedure as well as to the differences inherent to particle systems mentioned above; see for instance the discussion below the statement of Proposition~\ref{cor:var.formulation}.

Another setting in which an expansion for the homogenized coefficients is studied are colloidal particle suspensions \cite{DG-E, GerardVaret2019, GerardVaretHoefer_Einstein,  Haines2012APO, Niethammer2020ALV}. Under a suitable limit of many small particles, the homogenized equation for the fluid is a Stokes system having an effective viscosity. The latter admits an expansion in terms of the asymptotic volume fraction occupied by the particles.

We finally mention that questions similar to the ones contained in this paper were also investigated in the context of the $\nabla \phi$ model (a Gibbs measure modeling a fluctuating interface) \cite{gradphi2}, and non-linear elliptic equations with random coefficients \cite{ferg2,ferg1, fisneu}. In these settings, the goal is to show that the homogenized coefficients depend smoothly on the slope of the limit homogenized solution. This is not a situation in which the varying parameter can be nicely encoded by random fields with short-range correlations. As a consequence, a different, more quantitative approach is then mandatory.

%
%
%
%
%%%%%%%%%%%%%%%%%%%%%%%%%%%%%%%%%%%%%%%%%%%%%%%%%%%%%%%%%%%%%%
%%%%%%%%%%%%%%%%%%%%%%%%%%%%%%%%%%%%%%%%%%%%%%%%%%%%%%%%%%%%%%
%
%
% 
%
\section{Precise statement of the main results}
\label{s.statements}

We start by introducing some notation. We view a cloud of particles in $\R^d$ as an element of $\mmd(\Rd)$, the space of $\sigma$-finite measures that are sums of Dirac masses on $\Rd$. The dynamics of the particles is encoded by a mapping $\a_{\circ} : \mmd(\Rd) \to \R^{d\times d}_{\mathrm{sym}}$ taking values in the space of symmetric $d$-by-$d$ matrices. We assume that this mapping satisfies the following properties.

\begin{enumerate}

\item[$\bullet$] \emph{Uniform ellipticity}: there exists $\Lambda < \infty$ such that for every $\mu \in \mmd(\Rd)$,
\begin{align}\label{a.elliptic}
\mathrm{Id} \leq \a_\circ(\mu)  \leq \Lambda \, \mathrm{Id}.
\end{align}

\item[$\bullet$] \emph{Finite range of dependence}: for every $\mu \in \mmd(\Rd)$, we have that
\begin{align}\label{a.local} 
\a_\circ(\mu) = \a_\circ(\mu \mres B_{1/2}),
\end{align}
where $B_{1/2}$ denotes the Euclidean ball of unit diameter centered at the origin, and~$\mres$ is the restriction operator defined in \eqref{e.def.mres}.

\end{enumerate}
In \eqref{a.elliptic} and throughout the paper, whenever $a$ and $b$ are symmetric matrices, we write $a \le b$ to mean that $b-a$ is a positive semidefinite matrix. 

Roughly speaking, we want a particle sitting at the origin and surrounded by a cloud of particles $\mu$ to undergo an instantaneous diffusion driven by the matrix~$\a_\circ(\mu)$. We extend the mapping $\a_\circ$ by stationarity by setting, for every $x \in \Rd$ and $\mu \in \mmd(\Rd)$, 
\begin{equation*}  %\label{e.}
\a(\mu,x) := \a_\circ(\tau_{-x} \mu),
\end{equation*}
where $\tau_{-x} \mu$ is the measure $\mu$ translated by the vector $-x$; in other words, for every Borel set $U$, we have $\tau_{-x} \mu(U) = \mu(x + U)$. 
For every $\rho_0 \ge 0$, we denote by $\P_{\rho_0}$ the law of a Poisson point process over $\Rd$ with constant intensity $\rho_0$. We denote by $\E_{\rho_0}$ the associated expectation, and use $\mu$ for the canonical random variable on this probability space. The interacting particle system we aim to study is associated with the formal Dirichlet form
\begin{equation*}  %\label{e.}
f \mapsto \E_{\rho_0} \Ll[ \int_\Rd \nabla f \cdot \a \nabla f \, \d \mu \Rr] .
\end{equation*}
We refer to \eqref{e.def.deriv} below for the definition of the gradient of a sufficiently smooth function defined on $\mmd(\Rd)$, and \cite{gu2020decay} for a rigorous construction of the stochastic process.

We expect the evolution of this particle system to be described by a ``homogenized'' or ``hydrodynamic'' equation over large scales. Indeed, this has been shown for discrete models similar to the continuous one studied here, see in particular \cite{fuy}. In order to justify this rigorously, it is very useful to know about the regularity of the homogenized matrix, usually called the \emph{bulk diffusion matrix}, that enters into the equation. The aim of the present work is to show that this matrix is indeed an infinitely differentiable function of the particle density.

For our purposes, it will be convenient to identify the bulk diffusion matrix as a limit of finite-volume approximations. In finite volume, there are in fact two natural approximations to the bulk diffusion matrix, which were introduced in \cite{bulk} and are inspired by \cite{AKMbook, armstrong2016quantitative, informal}. They are based on the following subadditive quantities: for every bounded domain $U$, $p, q \in \Rd$, and $\rho_0 > 0$, we define
\begin{equation}\label{eq:defNu}
\begin{split}
\nu(U,p, \rho_0) &:= \inf_{v \in \cH^1_0(U)}\E_{\rho_0} \Ll[ \frac{1}{\rho_0 \vert U \vert} \int_{U} \frac{1}{2} (p+\nabla v) \cdot \a (p+\nabla v) \, \d \mu \Rr], \\
\nu_*(U,q, \rho_0) &:=  \sup_{u \in \cH^1(U)} \E_{\rho_0} \Ll[\frac{1}{\rho_0 \vert U \vert}\int_{U} \Ll( -\frac 1 2 \nabla u \cdot \a \nabla u + q \cdot \nabla u \Rr) \, \d \mu \Rr],
\end{split}
\end{equation}
where $|U|$ denotes the Lebesgue measure of $U$. Recall that $\mu$ is a sum of Dirac masses; for any function $F$, the integral $\int_U F \, \d \mu = \int_U F(z) \, \d \mu(z)$ therefore stands for the summation of $F(z)$ over every point $z$ in the intersection of $U$ and the support of $\mu$. 
The precise definitions of the function spaces $\cH^1(U)$ and $\cH^1_0(U)$ are given in Section~\ref{s.prelim} below. Informally, the functions in $\cH^1(U)$ are those whose squared gradient have finite integral over $U$; the functions in $\cH^1_0(U)$ must in addition respond continuously to the exit from $U$ or the entrance into~$U$ of a particle.
One can check (see \cite[Proposition~4.1]{bulk} or Subsection~\ref{subsec:elementary} below) that there exist symmetric $d$-by-$d$ matrices $\ab(U, \rho_0), \ab_*(U, \rho_0)$ that satisfy the bound \eqref{a.elliptic} and such that, for every $p,q \in \Rd$,
\begin{equation}\label{eq:NuMatrix}
\nu(U, p, \rho_0) = \frac{1}{2} p \cdot \ab(U, \rho_0) p \quad \text{and} \quad  \nu_*(U, q, \rho_0) = \frac{1}{2} q \cdot \ab_*^{-1}(U, \rho_0) q.
\end{equation}
For every $m \in \N$, we denote by $\cu_m := (- 3^m/2, 3^m/2)^d$ the cube of side-length $3^m$ centered at the origin. We also have that the sequence $(\ab(\cu_m, \rho_0))_{m \in \N}$ is decreasing, and the sequence $(\ab_*(\cu_m, \rho_0))_{m \in \N}$ is increasing. We define the bulk diffusion matrix as  the limit of the latter sequence:
\begin{align}\label{eq:defab}
\ab(\rho_0) := \lim_{m \to \infty} \ab_*(\cu_m, \rho_0).
\end{align}
It was shown in \cite{bulk} that the sequence $(\ab(\cu_m,\rho_0))_{m \in \N}$ converges to the same limit, and moreover, that there exists an exponent $\al > 0$ and a constant $C < \infty$ such that for every $m \ge 1$,
\begin{equation}\label{eq:rate}
\Ll| \ab(\cu_m, \rho_0) - \ab(\rho_0) \Rr| + \Ll| \ab_*(\cu_m, \rho_0) - \ab(\rho_0)\Rr|  \leq C 3^{-\alpha m}.
\end{equation}
The results of the present paper do not rely on this quantitative information. Indeed, to show our main results, we only appeal to \eqref{eq:defab} as the definition of the limit diffusion matrix. This definition coincides with the more classical one based on full-space stationary correctors, as explained in \cite[Appendix~B]{bulk}. 

Throughout the paper, we fix $q \in \Rd$, and denote by $\psi_m \in \cH^1(\cu_m)$ the optimizer in the definition of $\nu_*(\cu_m, q, \rho_0)$, see \eqref{eq:defNu}. The optimizer for $\nu_*(\cu_m, q, \rho_0)$ is unique provided that we impose the condition in \eqref{e.fix.constants} (the formulas derived throughout the paper only involve gradients of $\psi_m$, and are therefore insensitive to the precise way we ``fix the constants'').

For reasons that will be clarified below, we prefer to work with $\psi_m$, which optimizes some $\nu_*$ quantity, rather than with the corresponding optimizer for $\nu$. One consequence of this choice is that we have easier access to information about the smoothness of the mapping $\rho \mapsto \ab^{-1}(\rho)$ than of the mapping $\rho \mapsto \ab(\rho)$. Of course, since these matrices are uniformly elliptic in the sense of \eqref{a.elliptic}, discussing the smoothness of one or the other is equivalent (and from a physical perspective, it is no less natural to focus on ``fixing the average flux at $q$'' than to focus on ``fixing the average gradient at $p$'').

For clarity of exposition, we will first present a proof that the mapping $\rho \mapsto \ab^{-1}(\rho)$ is $C^{1,1}$. The precise statement is as follows.

\begin{theorem}[{$C^{1,1}$} regularity]  
\label{t.C11}
The following limit is well-defined and finite
\begin{equation}
\label{e.def.first}
c_1(\rho_0) := \lim_{m \to \infty} \int_{\R^d} \E_{\rho_0} \Ll[ \frac{1}{\rho_0 |\cu_m|} \int_{\cu_m} \nabla \psi_m \cdot ( \a - \aone)\nabla \psi_m^{\{1\}} \, \d \mu \Rr] \, \d x_1,
\end{equation}
where we write $\aone(\mu, z,x_1) := \a \Ll( \mu + \de_{x_1}, z \Rr)$ and $\nabla \psi_m^{\{1\}}(\mu, z,x_1) := \nabla \psi_m \Ll( \mu + \de_{x_1}, z \Rr)$. Moreover, as $\rho\in \R$ tends to zero, we have
\begin{equation}\label{C11.expansion}
q \cdot \ab^{-1}(\rho_0 + \rho) q = q \cdot \ab^{-1}(\rho_0) q + \rho c_1(\rho_0) + O(\rho^2).
\end{equation}
The term $O(\rho^2)$ hides a multiplicative constant that depends only on $d$, $\Lambda$ and $|q|$ (but not on $\rho_0$).
\end{theorem}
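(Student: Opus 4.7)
The strategy is to work with the finite-volume proxies $q \cdot \ab_*^{-1}(\cu_m, \rho) q$, for which the variational characterization in \eqref{eq:defNu}--\eqref{eq:NuMatrix} gives direct access, and then pass to the limit via \eqref{eq:defab}. The essential computational tool is the Poisson add-one-cost (Mecke) identity
$$\E_{\rho_0+\rho}[F(\mu)] - \E_{\rho_0}[F(\mu)] = \int_0^\rho \int_{\R^d} \E_{\rho_0+s}[F(\mu + \de_{x_1}) - F(\mu)]\, dx_1\, ds,$$
valid for sufficiently integrable functionals $F$ of the Poisson configuration.

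The first step is a two-sided sandwich. Since $\mu \mapsto \psi_m(\mu) \in \cH^1(\cu_m)$ is a measurable function, it is admissible in the $\nu_*$-problem at density $\rho_0+\rho$; conversely $\tilde\psi_m := \psi_m^{(\rho_0+\rho)}$ is admissible at density $\rho_0$. Writing $L(u,\mu) := \int_{\cu_m}\bigl(-\tfrac12 \nabla u \cdot \a \nabla u + q \cdot \nabla u\bigr)\, d\mu$ and $R_m(u,\rho) := \frac{\E_{\rho_0+\rho}[L(u,\mu)]}{(\rho_0+\rho)|\cu_m|} - \frac{\E_{\rho_0}[L(u,\mu)]}{\rho_0|\cu_m|}$, the optimality of $\psi_m, \tilde\psi_m$ yields
$$R_m(\psi_m,\rho) \le \tfrac12 q\cdot \bigl[\ab_*^{-1}(\cu_m,\rho_0+\rho) - \ab_*^{-1}(\cu_m,\rho_0)\bigr] q \le R_m(\tilde\psi_m,\rho).$$
Testing the Euler-Lagrange equations for $\psi_m$ at $\rho_0$ and $\tilde\psi_m$ at $\rho_0+\rho$ against $w := \tilde\psi_m - \psi_m$ gives the exact identity $R_m(\tilde\psi_m,\rho) - R_m(\psi_m,\rho) = \frac{1}{2|\cu_m|}\bigl(\frac{\E_{\rho_0+\rho}[\mathcal{E}]}{\rho_0+\rho} + \frac{\E_{\rho_0}[\mathcal{E}]}{\rho_0}\bigr)$ with $\mathcal{E} := \int_{\cu_m}\nabla w\cdot \a\nabla w\, d\mu$, and a further Mecke-plus-Cauchy-Schwarz argument based on $\E_{\rho_0+\rho}[\mathcal{E}] = (\E_{\rho_0+\rho} - \E_{\rho_0})[\int_{\cu_m}(q - \a\nabla\psi_m)\cdot\nabla w\, d\mu]$ shows $\E[\mathcal{E}] \les \rho^2 |q|^2 |\cu_m|$, so the two sandwich bounds collapse up to $O(\rho^2)$.

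The second step is to compute the leading-order term of $R_m(\psi_m,\rho)$. Applying the Mecke identity to $F(\mu) = L(\psi_m,\mu)$, the locality of $\a$ in \eqref{a.local} and the analogous locality of $\psi_m$ (it depends on $\mu$ only through $\mu|_{\cu_m+B_{1/2}}$) confine the $x_1$-integration to a set of volume $\sim |\cu_m|$, so after dividing by $\rho_0|\cu_m|$ the answer is $O(1)$. A direct algebraic manipulation of $\Delta_{x_1}L(\psi_m,\mu)$, using $\mu \mapsto \psi_m(\mu + \de_{x_1})$ as a test function in the Euler-Lagrange equation for $\psi_m$ at $\rho_0$, cancels the $q$-linear contributions and the diagonal term coming from an added particle sitting at $x_1 \in \cu_m$; what remains is exactly $c_1^m(\rho_0)$, the finite-volume analogue of \eqref{e.def.first}. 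Iterating Mecke once more expresses the remainder as $\int_0^\rho(\rho-s)\iint_{\R^{2d}}\E_{\rho_0+s}[\Delta_{x_1}\Delta_{x_2}L(\psi_m,\mu)]\, dx_1\, dx_2\, ds$, which the same locality reasoning combined with the a priori energy bound $\E_{\rho_0}[\int_{\cu_m}|\nabla\psi_m|^2\, d\mu] \les |q|^2\rho_0|\cu_m|$ (obtained by testing $u=0$ in \eqref{eq:defNu}) keeps uniformly $O(\rho^2)$.

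With the finite-volume estimates established uniformly in $m$, the convergence $\ab_*^{-1}(\cu_m,\rho) \to \ab^{-1}(\rho)$ from \eqref{eq:defab} allows the sandwich to pass to the limit, forcing $c_1^m(\rho_0) \to c_1(\rho_0)$ to exist and be finite and yielding the expansion \eqref{C11.expansion}. The main obstacle is the uniform-in-$m$ control of the second-order contributions: $L(\psi_m,\mu)$ depends on $\mu$ not only explicitly (through $\a$ and $d\mu$) but also implicitly via the random optimizer $\psi_m(\mu)$, and a naive expansion of $\Delta_{x_1}\Delta_{x_2}L$ generates non-local ``mixed-corrector'' terms with no obvious decay. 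The key simplification is that the sandwich is anchored at the \emph{fixed} test function $\psi_m = \psi_m^{(\rho_0)}$, which lets the Euler-Lagrange equation at $\rho_0$ absorb the linear sensitivity of the corrector, leaving only quadratic contributions controlled purely by the uniformly bounded $H^1$-energy.
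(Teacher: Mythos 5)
Your general framework — work at finite volume with the variational characterization of $\nu_*$, expand in the Poisson intensity via an add-one-particle (Mecke) identity, use the Euler--Lagrange relation to cancel the linear-in-corrector terms, and pass to the limit in~$m$ via \eqref{eq:defab} — matches the spirit of the paper, and your identification of $c_{1,m}$ as the first-order coefficient is essentially correct. But two gaps make the argument as written incomplete. First, the sandwich step is built around $w := \tilde\psi_m - \psi_m$, where $\tilde\psi_m$ is the $\nu_*$-optimizer at density $\rho_0 + \rho$. The paper's Lemma~\ref{lem:J} shows that the $\nu_*$-optimizer is \emph{the same function of the configuration} at every density (the problem decouples over the number of particles in $\cu_m$, and each slice-problem does not see $\rho_0$), so $w \equiv 0$ and your identity for $\E_{\rho_0+\rho}[\mathcal{E}]$ collapses to $0=0$. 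This is not an error per se — it just means the sandwich is degenerate and $R_m(\psi_m,\rho)$ is \emph{exactly} $\tfrac12\Delta^\rho_m(\rho_0)$ — but it signals that you have not noticed the structural fact that powers the whole proof, and the bound $\E[\mathcal{E}]\lesssim\rho^2|q|^2|\cu_m|$ you sketch for the nondegenerate case is circular without an a priori smallness of $\nabla w$ to absorb.

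The more serious issue is the control of the second-order remainder. After Mecke and the Euler--Lagrange cancellation, the quantity $\iint \E_{\rho_0+s}[\Delta_{x_1}\Delta_{x_2}L(\psi_m,\mu)]\,\d x_1\,\d x_2$ involves $\nabla D_{\{1\}}\psi_m$ and $\nabla D_{\{1,2\}}\psi_m$, since $\psi_m$ depends on the argument measure and so is perturbed along with $\a$ and $\d\mu$. You claim these are controlled ``purely by the uniformly bounded $H^1$-energy'' $\E_{\rho_0}[\int_{\cu_m}|\nabla\psi_m|^2\,\d\mu]\lesssim|q|^2\rho_0|\cu_m|$, but that estimate gives no information about the $x_1$-integrated energies of $\nabla D_{\{1\}}\psi_m$, $\nabla D_{\{1,2\}}\psi_m$ — a particle-difference estimate of the form $\int_{(\R^d)^{E}} \E[\frac{1}{\rho_0|\cu_m|}\int_{\cu_m}|\nabla D_{E}\psi_m|^2\,\d\mu]\lesssim 1$, $|E|\le 2$, is genuinely needed and does not follow from locality plus the base energy bound. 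Establishing it requires a separate bootstrap: test the Euler--Lagrange identity (in the precise form of Proposition~\ref{cor:var.formulation}, which itself requires a careful argument to accommodate ``distinguishable'' test functions like $\psi_m^{\{1\}}$) against $D_{\{1\}}\psi_m$ and $D_{\{1,2\}}\psi_m$, and induct on $|E|$; this is the content of the paper's Lemma~\ref{l.corrector.1}. Without that lemma, the ``keeps uniformly $O(\rho^2)$'' step has no justification uniform in $m$.
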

\begin{remark}
Theorem \ref{t.C11} yields that $q \cdot \ab^{-1}(\cdot) q$ is $C^{1,1}$. Indeed, an immediate consequence of expansion \eqref{C11.expansion} is that $c_1(\cdot)$ is the derivative of $q \cdot \ab^{-1}(\cdot) q$. Moreover, using \eqref{C11.expansion} around $\rho_0$ and $\rho_0 +\rho$, we see that $c_1(\rho_0 + \rho) = c_1(\rho_0) + O(\rho)$, i.e. that $c_1$ is Lipschitz continuous.
\end{remark}

A more explicit writing of the right side of \eqref{e.def.first} is:
\begin{equation*}  %\label{e.}
\int_{\R^d} \E_{\rho_0} \Ll[ \frac{1}{\rho_0 |\cu_m|} \int_{\cu_m} \nabla \psi_m(\mu,z) \cdot (\a(\mu,z) - \a \Ll( \mu + \de_{x_1},z \Rr)  ) \nabla \psi_m(\mu + \de_{x_1},z) \, \d \mu(z) \Rr] \, \d x_1.
\end{equation*}
In general, we use superscripts to indicate changes in the ``measure'' argument of the function under consideration: for instance, the quantity $\aone$ is obtained from $\a$ by replacing the argument $\mu$ with $\mu + \de_{x_1}$. 

In order to describe higher-order derivatives, we need to generalize this notation to arbitrary subsets of indices. For every finite subset $E \subset \N_+$ and $f$ an arbitrary function of the measure $\mu$, we define
\begin{equation}
\label{e.def.fE}
f^E : (\mu,(x_i)_{i \in E}) \mapsto f(\mu + \sum_{i \in E} \delta_{x_i}).
\end{equation}
For every $i \in \N_+$, we also write 
\begin{equation}
\label{e.def.diff}
D_i f := f^{\{i\}} - f.
\end{equation}
Notice that for every $i \neq j \in \N_+$, we have
\begin{equation*}  %\label{e.}
D_i D_j f = (f^{\{j\}} - f)^{\{i\}} - (f^{\{i\}} - f) = f^{\{i,j\}} - f^{\{i\}} - f^{\{j\}} + f.
\end{equation*}
In particular, the operators $D_i$ and $D_j$ commute. We can therefore define, for every $E = \{i_1,\ldots, i_p\} \subset \N_+$, the quantity
\begin{equation}  
\label{e.def.DE}
D_E f := D_{i_1} \cdots D_{i_p} f.
\end{equation}
Finally, we need at times to apply these operators to more complex expressions such as $f + g$, where $f$ and $g$ are two functions of the measure $\mu$, with the understanding that the operator applies only to $f$ and not to $g$. We use the superscript $\#$ to indicate the functions on which these operators are meant to be applied, keeping the others ``frozen''. That is, we write for instance
\begin{align*}
(f^\# + g)^E = f^E + g, \qquad (f^\# g)^E = f^E g,
\end{align*}
and similarly with more complex expressions. 
We also have
\begin{align*}
D_E(f^\# + g) = \Ll\{
\begin{array}{ll}
f+g, & \text{ if } E = \emptyset, \\
D_E f, & \text{ if } E \neq \emptyset, 
\end{array}
\Rr. \qquad D_E(f^\#  g) = (D_E f) g.
\end{align*} 
We use the notation $\dbint{1,k} := \{1,2,\ldots, k\}$. Here is our main result.
\begin{theorem}[Smoothness] 
\label{t.smooth}
For each $k \in \N_+$, there exists a constant $C_k(\Lambda,d) < \infty$ (not depending on $\rho_0$) such that the  limit 
\begin{multline}  
\label{e.def.ck}
c_k(\rho_0) \\ := \lim_{m \to \infty} \int_{(\R^d)^k}  \E_{\rho_0}\bigg[ \frac{1}{\rho_0 \vert \cu_m \vert} \int_{\cu_m} \nabla \psi_m
\cdot D_{\dbint{1,k}}\Ll((\a - \a^\#  )\nabla \psi^\#_m\Rr) \, \d \mu \bigg] \, \d x_1 \, \cdots \, \d x_k,
\end{multline}
is well-defined and satisfies $|c_k(\rho_0)| \le C_k |q|^2$. Moreover, the mapping $\rho_0 \mapsto q \cdot \ab^{-1}(\rho_0) q$ is infinitely differentiable, and its $k$-th derivative is $c_k(\rho_0)$.
%Moreover, as $\rho \in \R$ tends to zero, we have
%\begin{equation}  \label{e.expansion}
%q \cdot \ab^{-1}(\rho_0 + \rho) q = q \cdot \ab^{-1}(\rho_0) q  + \sum_{\ell = 1}^{k} c_\ell(\rho_0) \frac{\rho^\ell}{\ell!} + O(\rho^{k+1}).
%\end{equation}
%The term $O(\rho^{k+1})$ hides a multiplicative constant that depends only on $d$, $\Lambda$, $k$, and $|q|$ (but not on $\rho_0$). In particular, the mapping $\rho_0 \mapsto q \cdot \ab^{-1}(\rho_0) q$ is infinitely differentiable, with $\ell$-th derivative given by $c_\ell(\rho_0)$ for every $\ell \in \N_+$.
\end{theorem}

Note that due to the local nature of the term $(\a - \a^\#)$, see~\eqref{a.local}, the integrals in~\eqref{e.def.first} and~\eqref{e.def.ck} are in fact finite-volume quantities, as the outermost integrals may be replaced by, for instance, $\int_{\cu_{m+1}}$ and $\int_{(\cu_{m+1})^k}$, respectively.

If one modifies the unit range of dependence assumption in \eqref{a.local}, assuming instead that there exists $R \in (0,\infty)$ such that for every $\mu \in \mmd(\Rd)$,
\begin{equation}
\label{e.assumption.R}
    \a_\circ(\mu) = \a_\circ(\mu \mres B_{R/2}),
\end{equation}
then the fact that the mapping $\ab$ depends smoothly on the density $\rho_0$ can be obtained by a change of scale. The particle density for the rescaled process is $R^d \rho_0$. Since the constants in Theorem~\ref{t.smooth} do not depend on $\rho_0$, we obtain that, under the assumption of \eqref{e.assumption.R} and with the same definition \eqref{e.def.ck} of $c_k(\rho_0)$, we have
\begin{equation*}
    |c_k(\rho_0)| \le C_k |q|^2 \,R^{kd}.
\end{equation*}
Estimates of this type may be useful for controlling situations in which the range of dependence is unbounded.

We now comment on the reason why we choose to work with quantities derived from $\nu_*$ rather than $\nu$. Recall that the function $\psi_m$ is the optimizer in the definition \eqref{eq:defNu} of $\nu_*(\cu_m,q,\rho_0)$. This object may seem to depend upon the choice of the particle density $\rho_0$. However, it is in fact not the case. Indeed, the optimization problem for $\nu_*$ can be split into a sum of unrelated optimization problems, one for each fixed number of particles in $\cu_m$. The optimizer for $\nu_*$ is thus a superposition of these optimizers, irrespectively of the underlying density of the measure. 
We refer to Section~\ref{s.prelim} below for a more detailed discussion of this property. The fact that we can view the same object $\psi_m$ as the optimizer of $\nu_*(\cu_m,q,\rho_0)$ for arbitrary values of $\rho_0$ would not be valid were we to work with the optimizers of~$\nu(\cu_m,p,\rho_0)$. 

%This very convenient feature of the optimizers of $\nu_*$ is the reason why we prefer to work with those objects throughout this paper. One consequence of this choice is that we have easier access to information about the smoothness of the mapping $\rho \mapsto \ab^{-1}(\rho)$ than of the mapping $\rho \mapsto \ab(\rho)$. Of course, since these matrices are all between $\id$ and $\Lambda \id$, discussing the smoothness of one or the other is irrelevant (and from a physical perspective, it is no less natural to focus on ``fixing the average flux at $q$'' than to focus on ``fixing the average gradient at $p$'').

The remainder of the paper is organized as follows. We discuss function spaces more precisely in Section~\ref{s.prelim}, and prove a technically useful lemma stating that the quantity~$\nu_*$ does not change if the particles become distinguishable. We then show Theorem~\ref{t.C11} in Section~\ref{s.C11}. The more general Theorem~\ref{t.smooth} is then proved in Section~\ref{s.higher-order}. Finally, in Section~\ref{s.local.unif}, we show that the mappings $\rho_0 \mapsto \ab(\cu_m,\rho_0)$ and $\rho_0 \mapsto \ab_*(\cu_m,\rho_0)$ converge to $\rho_0 \mapsto  \ab(\rho_0)$ locally uniformly, and that this is also the case for the convergence in~\eqref{e.def.ck} towards the higher-order derivatives of $\ab(\rho_0)$. 

%
%
%
%
%%%%%%%%%%%%%%%%%%%%%%%%%%%%%%%%%%%%%%%%%%%%%%%%%%%%%%%%%%%%%%
%%%%%%%%%%%%%%%%%%%%%%%%%%%%%%%%%%%%%%%%%%%%%%%%%%%%%%%%%%%%%%
%
%
%
%
\section{Setting and functional framework}
\label{s.prelim}

In this section, we rigorously introduce the notation and functional framework that we use in this paper. In particular, we define the function spaces $\cH^1(\cu_m)$ and $\cH^1_0(\cu_m)$ that appear in the optimization problems $\nu$ and $\nu_*$ in \eqref{eq:defNu}. This will also allow us to justify why, as mentioned in the previous section, we will prove the main results of this paper by mainly working with the quantity $\nu_*$ instead of $\nu$.  %our choice to focus on the dual quantity $\nu_*$ and to motivate why for our purposes the latter is more convenient than $\nu$. 

%As shown in the The spaces $\cH(\cu_m)$ and $\cH^1_0(\cu_m)$ contain suitable functions $f: \mcl M_\delta(\R^d) \to \R$. By construction, these functions do not distinguish the particles in $\supp(\mu)$ and are therefore invariant under permutations of their positions. In Subsection \ref{}, we focus on this issue and show that the optimization problems for $\nu$ and $\nu_*$ do not improve if we allow the particles to be distinguishable. This crucial observation will play an important role in the argument of the next sections. %where which states that one does not identify better optimizers in the definitions of $\nu$ and $\nu_*$ by allowing the particles to be distinguishable. 

\subsection{Configuration space}
We denote by $\R^d$ the standard Euclidean space, by $Q_s := \Ll(-s/2, s/2\Rr)^d$ the open hypercube of side length $s > 0$, and we write $\cu_m := Q_{3^m}$ for $m \in \N$. We also use $\cu$ as a shorthand notation for the unit cube $\cu_0$. %

We recall that $\mmd(\Rd)$ is the space of $\sigma$-finite measures that are sums of Dirac masses on $\Rd$, which we think of as the configuration space of particles, and that $\P_{\rho_0}$ corresponds to the probability measure for the Poisson point process having constant density $\rho_0 > 0$. We write $\E_{\rho_0}$ for the expectation with respect to $\P_{\rho_0}$. For a Borel set $U\subset \R^d$,  we denote by $\mcl F_U$ the $\sigma$-algebra generated by the mappings $\mu \mapsto \mu(V)$, for all Borel sets $V \subset U$, completed with all the $\P_{\rho_0}$-null sets. We use the notation $\mcl F$ for  $\mcl F_{\Rd}$. With this construction, assumption \eqref{a.local} yields that the random matrix $\a_\circ : \mmd(\Rd) \to \R^{d\times d}_{\mathrm{sym}}$ is an $\mcl F_{B_{1/2}}$-measurable mapping.

\iffalse{
We define the random conductance with local interaction in the following ways:  give ourselves a function $\a_\circ : \mmd(\Rd) \to \R^{d\times d}_{\mathrm{sym}}$, where $\R^{d\times d}_{\mathrm{sym}}$ is the set of $d$-by-$d$ symmetric matrices, and we assume that this mapping satisfies the following properties:
\begin{itemize}
\item \emph{uniform ellipticity}: there exists $\Lambda < \infty$ such that for every $\mu \in \mmd(\Rd)$,
\begin{equation}
\label{e.unif.ell}
\forall \xi \in \Rd, \quad |\xi|^2 \le \xi \cdot \a_\circ(\mu) \xi \le \Lambda |\xi|^2 \; ;
\end{equation}
\item \emph{finite range of dependence}: denoting by $B_1$ the Euclidean ball of radius $1$ centered at the origin, we assume that $\a_\circ$ is $\mcl F_{B_1}$-measurable. 
\end{itemize}
We denote by $\tau_{-x} \mu$ the translation of the measure $\mu$ by the vector $-x \in \Rd$; more precisely, for every Borel set $U$, we have $(\tau_{-x} \mu)(U) = \mu(x + U)$. We extend $\a_\circ$ by stationarity by setting that, for every $\mu \in \mmd(\Rd)$ and $x \in \Rd$, 
\begin{equation*}  %\label{e.}
\a(\mu,x) := \a_\circ(\tau_{-x} \mu).
\end{equation*}
}\fi

\subsection{Function spaces}
We now introduce several function spaces on $\mmd(\Rd)$ that will be used in this paper. In particular, we will give the rigorous definition of $\cH^1(U)$ and~$\cH^1_0(U)$.

We start with basic considerations concerning $\mcl F$-measurable functions on $\mmd(\Rd)$. Given a Borel set $U \subset \Rd$, it is often useful to decompose an $\mcl F$-measurable function into a series of Borel-measurable functions on Euclidean spaces, conditioned on the number of particles in $U$ and the configuration outside $U$. More precisely, we denote by $\mcl B_U$ the set of Borel subsets of $U$. For every $\mu \in \mmd(\Rd)$, we denote by $\mu \mres U \in \mmd(\Rd)$ the measure such that, for every Borel set $V \subset \Rd$,
\begin{equation}  
\label{e.def.mres}
(\mu \mres U)(V) = \mu(U \cap V). 
\end{equation}
Then for $f : \mmd(\Rd) \to \R$ which is $\mcl F$-measurable, we define
\begin{equation}  \label{eq:Projection}
f_n(\cdot, \mu \mres U^c) :
\Ll\{
\begin{array}{rcl}  %\label{}
U^n  & \to & \R \\
(x_1, \ldots, x_n) & \mapsto & f \Ll( \sum_{i = 1}^n \de_{x_i} + \mu \mres U^c \Rr).
\end{array}
\Rr.
\end{equation}
The function $f_n$ is $\mcl B_U^{\otimes n} \otimes \mcl F_{U^c}$-measurable. Reciprocally, given a series of permutation-invariant functions with such measurability properties, we can reconstruct an $\mcl F$-measurable function $f$ by specifying that, on the event $\mu \mres U = \sum_{i = 1}^n \de_{x_i}$, we have $f(\mu) := f_n(x_1,\ldots,x_n,\mu \mres U^c)$. We call the mapping $f \mapsto (f_n)_{n \in \N}$ the ``canonical projection", and refer to \cite[Lemmas~2.2 and A.1]{bulk} for more details.

We now explain the notion of derivatives for functions defined on $\mmd(\Rd)$. For every sufficiently smooth function $f : \mmd(\Rd) \to \R$, $\mu \in \mmd(\Rd)$, and $x \in \supp \mu$, the gradient $\nabla f(\mu,x)$ is such that, for every ${k \in \{1,\ldots,d\}}$, 
\begin{equation}  
\label{e.def.deriv}
\mathrm e_k \cdot \nabla f(\mu, x) = \lim_{h \to 0} \frac{f(\mu - \delta_x + \delta_{x + h \mathrm e_k}) - f(\mu)}{h},
\end{equation}
where $(\mathrm e_1,\ldots, \mathrm e_d)$ is the canonical basis of $\Rd$. While we wish to emphasize that the function $\nabla f(\mu,\cdot)$ is naturally defined only on $\supp \mu$, we extend it for convenience~as
\begin{equation}  
\label{e.nabla.convention}
\text{for every } x \notin \supp \mu, \quad \nabla f(\mu,x) := 0. 
\end{equation}

To clarify the notion of smooth functions appearing in the previous paragraph, we can appeal to the canonical projections discussed above. For every bounded open set $U \subset \Rd$, we define the sets of smooth functions $\cC^\infty(U)$ and $\cC^\infty_c(U)$ in the following way. We have that $f \in \cC^{\infty}(U)$ if and only if $f$ is an $\mcl F$-measurable function, and for every $\mu \in \mmd(\Rd)$ and $n \in \N$, the function $f_n(\cdot, \mu \mres U^c)$ appearing in \eqref{eq:Projection} is infinitely differentiable on $U^n$. 
The space $\cC^{\infty}_c(U)$ is the subspace of $\cC^{\infty}(U)$ of functions that are $\mcl F_K$-measurable for some compact set $K \subseteq U$.

We define $\cL^2$ to be the space of $\mcl F$-measurable functions $f$ such that $\E_{\rho_0}\Ll[f^2\Rr]$ is finite.  As usual, elements in this function space that coincide $\P_{\rho_0}$-almost surely are identified. We now define $\cH^1(U)$ as the infinite-dimensional analogue of the classical Sobolev space $H^1$: for every $f \in \cC^\infty(U)$, we introduce the norm
\begin{align}\label{eq:defH}
\norm{f}_{\cH^1(U)} = \Ll(\E_{\rho_0}[f^2(\mu)] + \E_{\rho_0}\Ll[\int_U \vert \nabla f(\mu, x) \vert^2 \, \d \mu(x) \Rr]\Rr)^{\frac{1}{2}},
\end{align}
and set 
\begin{equation}
\begin{aligned}\label{def:spaces}
\cH^1(U) &:= \overline{\{ f \in \cC^\infty(U) \, \colon \, \norm{f}_{\cH^1(U)} < +\infty \}}^{\norm{ \, \cdot \,}_{\cH^1(U)}},\\
\cH^1_0(U) &:= \overline{\{ f \in \cC^\infty_c(U) \, \colon \, \norm{f}_{\cH^1(U)} < +\infty \}}^{\norm{ \, \cdot \,}_{\cH^1(U)}},\\
\end{aligned}
\end{equation}
namely the completion, under $\norm{\cdot}_{\cH^1(U)}$, of the sets of functions in $\cC^\infty(U)$ or $\cC^\infty_c(U)$ that have finite norm $\norm{\cdot}_{\cH^1(U)}$. As for classical Sobolev spaces, for every $f \in \cH^1(U)$, we can interpret $\nabla f(\mu,x)$ when $x \in U \cap \supp \mu$ in a weak sense. This may be understood via the canonical projection in \eqref{eq:Projection}.

The two spaces $\cH^1(U)$ and $\cH^1_0(U)$ share many similarities as well as some fundamental differences. The latter ones, in turn, derive from the differences between $\cC^\infty(U)$ and $\cC^{\infty}_c(U)$: On the one hand, functions in $\cC^\infty(U)$ do depend on $\mu \mres U^c$ and the number of particles $\mu(U)$ in a relatively arbitrary (measurable) way. On the other hand, functions in the subset $\cC^{\infty}_c(U)$ are $\mathcal{F}_U$-measurable as they do not depend on particles that cross the boundary $\partial U$. 
%This implies that, if $f \in \cC^\infty_c(U)$, then the elements of canonical projection $\{ f_n\}_{n \in \N}$ in \eqref{eq:Projection} need to satisfy a suitable compatibility condition.  

\smallskip

When managing elements in $\cH^1(U)$ or $\cH^1_0(U)$, it is at times useful to think about them in terms of their canonical projection defined in \eqref{eq:Projection}: Let $f \in \cH^1(U)$ and let $(f_n)_{n\in \N}$  be the associated canonical projection. Then for $\P_{\rho_0}$-almost every $\mu\mres U^c$ and every $n\in \N$, we have that
\begin{itemize}

\item The function $f_n( \, \cdot \, , \mu\mres U^c)$ belongs to the (standard) Sobolev space $H^1(U^n)$;

\item The function $f_n( \, \cdot \, , \mu\mres U^c)$ is invariant under permutations: if $S_n$ denotes the set of permutations of $\dbint{1,n}$ and we write $(x_1, \cdots ,  x_n) \in U^n$, then for every $\sigma \in S_n$ it holds
\begin{align}\label{perm.invariance}
f_n( x_1, \cdots, x_n, \mu\mres U^c) = f_n( x_{\sigma(1)}, \cdots x_{\sigma(n)} , \mu\mres U^c ) \ \ \ \ \text{almost everywhere in $U^n$.}
\end{align}
\end{itemize}

If $f \in \cH^1_0(U)$, then the canonical partition needs to satisfy the following additional ``compatibility condition'': for every $n\in \N_+$ and on the set $\{(x_1, \cdots, x_n) \in U^n \, \colon \, x_1 \in \partial U \}$, it holds
\begin{align}\label{comp.condition}
f_{n}(x_1, \cdots, x_n , \mu\mres U^c) = f_{n-1}(x_2, \cdots, x_n,  \mu\mres U^c ),
\end{align}
where the identity is to be understood in the sense of traces. Note that, by the invariance under permutations, the above property also holds if $x_1$ is replaced by any other coordinate $x_i$, $i=2, \cdots, n$. Moreover, for every $f \in \cH^1_0(U)$ and $n \in \N$, we have that $f_n(\cdot,\mu \mres U^c)$ in fact does not depend on $\mu \mres U^c$. 

We summarize the previous remarks in Table \ref{t.differences}.
\begin{table}[ht]
\begin{center}
\begin{tabular}{ c|cccc } 
\toprule
\shortstack{Function \\ space} & \shortstack{$H^1$-regularity for \\  particles in $U$} & \shortstack{Compatibility\\ condition when \\ particles cross $\partial U$} & \shortstack{$\mcl F_U$- \\measurable} & \shortstack{For every\\ open set  $V \subset U$} \\
\midrule
$\cH^1(U)$ & Yes & No&  No& $\cH^1(U) \subset \cH^1(V)$\\ 
$\cH^1_0(U)$ & Yes &  Yes &  Yes& $\cH^1_0(V) \subset \cH^1_0(U)$\\ 
\bottomrule
\end{tabular}
\medskip
\caption{Differences between $\cH^1(U)$ and $\cH^1_0(U)$.}
\label{t.differences}
\end{center}
\end{table}

\subsection{Elementary properties of optimizers}
\label{subsec:elementary}
As seen in the previous subsection, the spaces $\cH^1$ and $\cH^1_0$ differ in important ways, and this will translate into differences for the optimizers of $\nu$ and $\nu_*$. In fact, except in part of Section~\ref{s.local.unif}, we will only rely on quantities derived from $\nu_*$. In this subsection, we present some key properties of optimizers of this quantity, and highlight those that would not be shared by the optimizers of~$\nu$. 

%We keep the reference particle density $\rho_0$ fixed throughout the paper, and sometimes allow ourselves to leave it implicit in the notation for ease of reading. 
For $U$ a Lipschitz domain and $q \in \Rd$, we denote by $\psi_{U,q} \in \cH^1(U)$ the maximizer in the definition of $\nu_*(U,q,\rho_0)$. By \cite[Proposition~4.1]{bulk} (see also Lemma~\ref{lem:J} below), this optimizer exists, and is unique provided we also impose that
\begin{equation}
\label{e.fix.constants}
{\E \Ll[ \psi_{U,q} \vb \mu(U), \mu \mres U^c \Rr] = 0}.
\end{equation} 
This optimizer is $\mcl{F}_{B_{1/2}(U)}$-measurable, with $B_{1/2}(U) = \{x \in \Rd: \dist(x, U) < \frac{1}{2}\}$. Since $q \mapsto \psi_{U,q}$ is a linear mapping, there exists a matrix $\ab_*(U,\rho_0)$ such that 
\begin{equation*}  %\label{e.}
\nu_*(U,q,\rho_0) = \E_{\rho_0} \Ll[\frac{1}{\rho_0 \vert U \vert}\int_{U} \Ll( -\frac 1 2 \nabla \psi_{U,q} \cdot \a \nabla \psi_{U,q} + q \cdot \nabla \psi_{U,q} \Rr) \, \d \mu \Rr] = \frac 1 2 q \cdot \ab_*^{-1}(U,\rho_0) q .
\end{equation*}
The uniform ellipticity assumption \eqref{a.elliptic} readily implies that $\mathrm{Id} \le \ab_*(U,\rho_0) \le \Lambda \mathrm{Id}$.
By the first variation, we have for every $u \in \cH^1(U)$ that
\begin{equation}  
\label{e.first.var}
\E_{\rho_0} \Ll[\int_{U} \Ll( - \nabla \psi_{U,q} \cdot \a \nabla u+ q \cdot \nabla u \Rr) \, \d \mu \Rr] = 0.
\end{equation}
Using this with $u = \psi_{U,q}$, we get that
\begin{equation}  
\label{e.energy.identities}
\begin{split}
q \cdot \ab_*^{-1}(U,\rho_0) q &= \E_{\rho_0} \Ll[\frac{1}{\rho_0 \vert U \vert}\int_{U} \nabla \psi_{U,q} \cdot \a \nabla \psi_{U,q} \, \d \mu \Rr] \\
& = \E_{\rho_0} \Ll[\frac{1}{\rho_0 \vert U \vert}\int_{U} q \cdot \nabla \psi_{U,q} \, \d \mu \Rr].    
\end{split}
\end{equation}
In particular, using the uniform ellipticity assumption once more, we obtain the basic Dirichlet energy estimate
\begin{equation}
\label{energy.basic}
\E_{\rho_0} \Ll[\frac{1}{\rho_0 \vert U \vert}\int_{U} |\nabla \psi_{U,q}|^2\, \d \mu \Rr] \le |q|^2.
\end{equation}
Similar properties are also valid for optimizers of $\nu$, and we refer to \cite[Proposition~4.1]{bulk} for details. Optimizers of $\nu_*$ differ however in one crucial aspect: denoting by $(\psi_{U,q,n})_{n \in \N}$ the canonical projection of $\psi_{U,q}$, see \eqref{eq:Projection}, we can identify each $\psi_{U,q,n}(\cdot,\mu \mres U^c)$ as the solution to an elliptic equation. In particular, the function~$\psi_{U,q}$, which was defined as the optimizer in the definition of  $\nu_*(U,q,\rho_0)$, \emph{in fact does not depend on $\rho_0$}. This property would not be valid for optimizers of $\nu$. 

In order to clarify this, we introduce the following notation: for each $n \in \N$, $\mu \in \mmd(\Rd)$, and $u \in H^1(U^n)$, we write
\begin{multline}\label{eq:DualPDE}
\mcl J_n(u, U, q, \mu \mres U^c) 
\\
:=  \frac1{\rho_0|U|}\fint_{U^n} \sum_{i=1}^n \Ll( -\frac 1 2 \nabla_{x_i} u \cdot \a\Ll(\sum_{i=1}^n \delta_{x_i} + \mu \mres U^c, x_i\Rr) \nabla_{x_i} u + q \cdot \nabla_{x_i} u \Rr) \, \d x_1 \cdots \, \d x_n. 
\end{multline}
This quantity corresponds to the functional that is optimized in the definition of~$\nu_*$, see \eqref{eq:defNu}, but where we have conditioned on $\mu(U) = n$ and $\mu \mres U^c$; and where moreover, we substituted an arbitrary $u \in H^1(U^n)$ in place of the canonical projection $u_n$ of some function $u \in \cH^1(U)$. We thus have 
\begin{align}  %\label{e.}
\notag
\nu_*(U,q,\rho_0) 
& = \sup_{u \in \cH^1(U)}\E_{\rho_0} \Ll[  \sum_{n \in \N} \P_{\rho_0} \Ll[ \mu(U) = n \Rr] \, \mcl J_n(u_n(\cdot,\mu \mres U^c),U,q,\mu \mres U^c) \Rr] \\
\label{e.supJ}
& \le \E_{\rho_0} \Ll[ \sum_{n \in \N} \P_{\rho_0} \Ll[ \mu(U) = n \Rr] \, \sup_{u \in H^1(U^n)} \mcl J_n(u,U,q,\mu \mres U^c) \Rr].
\end{align}
The next lemma implies that the inequality above is in fact an equality. Recall that we denote by $(\psi_{U,q,n})_{n \in \N}$ the canonical projection of $\psi_{U,q}$, the optimizer of $\nu_*(U,q,\rho_0)$.

\begin{lemma}\label{lem:J}
For every $\mu \in \mmd(\Rd)$ and $n \in \N$, let $u_{U,q,n}(\cdot,\mu \mres U^c) \in H^1(U^n)$ be the unique maximizer of the functional $\mcl J_n(\cdot, U,q,\mu \mres U^c)$ subject to the constraint $\fint_{U^n} u_{U,q,n}(\cdot, \mu \mres U^c) = 0$. For $\P_{\rho_0}$-almost every $\mu \in \mmd(\Rd)$ and every $n \in \N$, we have
\begin{equation}  
\label{e.lem:J}
u_{U,q,n}(\cdot, \mu \mres U^c) = \psi_{U,q,n}(\cdot, \mu \mres U^c).
\end{equation}
\end{lemma}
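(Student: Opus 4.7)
My plan is to exploit the fact that, unlike $\cH^1_0(U)$, the space $\cH^1(U)$ imposes no compatibility condition across $\partial U$, so the optimization problem for $\nu_*(U,q,\rho_0)$ splits completely into independent optimization problems indexed by the particle count $n = \mu(U)$ and the external configuration $\mu \mres U^c$.

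First, I would observe that the canonical projection $u \mapsto (u_n)_{n \in \N}$ sets up an identification of $\cH^1(U)$ with the space of families $(v_n)_{n \in \N}$ where each $v_n : U^n \times \mmd(U^c) \to \R$ is $\mcl B_U^{\otimes n} \otimes \mcl F_{U^c}$-measurable, permutation-invariant in the $U^n$-coordinates, and satisfies the square-integrability condition implicit in $\norm{\cdot}_{\cH^1(U)}$. The crucial feature, already recorded in Table~\ref{t.differences}, is that no compatibility condition such as~\eqref{comp.condition} is required: one can freely prescribe $v_n$ for each $n$, and re-glue these pieces into a single $\mcl F$-measurable function on $\mmd(\Rd)$ without continuity constraints across boundaries.

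Second, using this identification I would rewrite the functional appearing in $\nu_*$ as
\begin{equation*}
\E_{\rho_0}\Bigl[\sum_{n \in \N} \P_{\rho_0}[\mu(U) = n] \, \mcl J_n\bigl(u_n(\cdot,\mu \mres U^c), U, q, \mu \mres U^c\bigr) \Bigr],
\end{equation*}
and note that every term in the sum depends on a different slice $u_n$ of the canonical projection, with no coupling between different slices and no coupling to $\mu \mres U^c$ beyond the parameter of the functional $\mcl J_n$. Hence the supremum over $u \in \cH^1(U)$ can be interchanged with the $\omega$-wise maximization, giving equality (rather than merely the inequality) in~\eqref{e.supJ}. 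For each fixed $n$ and almost every $\mu \mres U^c$, the functional $\mcl J_n(\cdot, U, q, \mu \mres U^c)$ on $H^1(U^n)$ is strictly concave quadratic modulo constants thanks to the uniform ellipticity~\eqref{a.elliptic}, so it admits a unique maximizer once we impose $\fint_{U^n} u_{U,q,n} = 0$; this is exactly $u_{U,q,n}(\cdot, \mu \mres U^c)$.

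Third, I would conclude by uniqueness. The element $\psi_{U,q} \in \cH^1(U)$ is the unique maximizer under the normalization~\eqref{e.fix.constants}, and its canonical projection $(\psi_{U,q,n})_{n \in \N}$ must, by the preceding decoupling argument, maximize $\mcl J_n(\cdot, U, q, \mu \mres U^c)$ slice-by-slice. The normalization~\eqref{e.fix.constants} translates (under the Poisson law, conditional on $\mu(U) = n$ the points are uniform i.i.d.\ in $U$) precisely into $\fint_{U^n} \psi_{U,q,n}(\cdot, \mu \mres U^c) = 0$, matching the normalization used to single out $u_{U,q,n}$. This forces~\eqref{e.lem:J}. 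The only real subtlety to check carefully is measurability and joint integrability when gluing the slice-wise maximizers $u_{U,q,n}(\cdot,\mu\mres U^c)$ back into a single element of $\cH^1(U)$; this is where the freedom afforded by $\cH^1(U)$ (as opposed to $\cH^1_0(U)$) is essential, and is the one step I expect to require a bit of bookkeeping rather than new ideas.
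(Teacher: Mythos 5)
Your proposal matches the paper's proof: both exploit the slice-by-slice decoupling of the $\nu_*$ optimization, identify the slice-wise maximizers $u_{U,q,n}$ with the canonical projection of $\psi_{U,q}$, and conclude by uniqueness of the normalized maximizer. The one step you defer as ``a bit of bookkeeping''---checking that the glued family of slice-wise maximizers actually defines an element of $\cH^1(U)$, so that equality in \eqref{e.supJ} is genuinely attained---is where the paper's proof does its real technical work, combining the energy bound \eqref{slice.energy} with a Poincar\'e inequality on the product domain $U^n$ (with an $n$-independent constant) to obtain $L^2$ control that grows only linearly in $n$ and is therefore integrable against the Poisson tails, so it is worth recognizing this as a substantive verification rather than pure routine.
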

\begin{remark}  %\label{}
The quantities $u_{U,q,n}(\cdot, \mu \mres U^c)$ and $\psi_{U,q,n}(\cdot, \mu \mres U^c)$ in fact only depend on the restriction of $\mu \mres U^c$ to the set of points that are at distance at most $1/2$ from~$U$, by the finite-range dependence assumption \eqref{a.local}. The statement that \eqref{e.lem:J} holds for $\P_{\rho_0}$-almost every $\mu \in \mmd(\Rd)$ therefore does not depend on $\rho_0 > 0$. We are forced to state \eqref{e.lem:J} only for $\P_{\rho_0}$-almost every $\mu$ since a priori we only know that $\psi_{U,q,n}(\cdot,\mu \mres U^c)$ is well-defined for $\P_{\rho_0}$-almost every $\mu$; but the lemma itself provides us with a straightforward way to extend the definition to every $\mu \in \mmd(\Rd)$. In the proof below, we observe that there exists a function $u_{U,q} \in \cH^1(U)$ whose canonical projection is $(u_{U,q,n})_{n \in \N}$, and then show that $u_{U,q} = \psi_{U,q}$. 
\end{remark}
\begin{proof}[Proof of Lemma~\ref{lem:J}]
We first observe that, for each $\mu \in \mmd(\Rd)$, the function $u_{U,q,n}(\cdot, \mu \mres U^c)$ is invariant under permutation of its coordinates. This is immediate from the facts that $\mcl J_n(\cdot,U,q,\mu \mres U^c)$ admits a unique mean-zero maximizer, and that this functional as well as the mean-zero constraint are invariant under permutations. 

We now define the function $u_{U,q}: \mcl M_\delta(\R^d) \to \R$ in such a way that, on the event that $\mu \mres U = \sum_{i = 1}^n \de_{x_i}$, we have
\begin{align*}
u_{U,q}(\mu) := u_{U,q,n}(x_1, \cdots, x_n , \mu\mres U^c).
\end{align*}
This definition makes sense since we have verified that $u_{U,q,n}(\cdot,\mu \mres U^c)$ is invariant under permutation of its coordinates. It is also clear that the canonical projection of the function $u_{U,q}$ is the family of functions $(u_{U,q,n})_{n \in \N}$ (so the notation is sound). 

We now argue that $u_{U,q} \in \cH^1(\cu_m)$, and by the uniqueness of the optimizer for~$\nu_*$ and \eqref{e.supJ}, this will imply that $u_{U,q} = \psi_{U,q}$, as desired. Let now $\mu\mres U^c$ be fixed. By construction, each function $u_{U,q,n}(\cdot , \mu \mres U^c)$ satisfies, for every $v \in H^1(U^n)$, the variational identity
\begin{align*}
\int_{U^n} \sum_{i=1}^n \Ll(  \nabla_{x_i} u_{U,q,n}(\cdot , \mu \mres U^c) \cdot \a\Ll(\sum_{i=1}^n \delta_{x_i} + \mu \mres U^c, x_i\Rr) \nabla_{x_i} v - q \cdot \nabla_{x_i} v \Rr) \, \d x_1 \cdots \, \d x_n =  0.
\end{align*} 
Choosing $v= u_{U,q,n}(\cdot , \mu \mres U^c)$ and using \eqref{a.elliptic} and Young's inequality, we infer that
\begin{align}\label{slice.energy}
\frac 1 n \sum_{i=1}^n \fint_{U^n} |\nabla_{x_i} u_{U,q,n}(\cdot , \mu \mres U^c)|^2 \le |q|^2.
\end{align}
Moreover, since $u_{U,q,n}(\cdot , \mu \mres U^c)$ has zero-average on $U^n$, we may apply Poincar\'e's inequality in the product domain $U^n$ (see for instance \cite{firstcourse} or \cite[Proposition~3.1]{bulk}) and obtain that there exists a constant $C(U) < +\infty$ such that
\begin{align}\label{slice.L2}
\fint_{U^n} |u_{U,q,n}(\cdot , \mu \mres U^c)|^2 \le C  \sum_{i=1}^n \fint_{U^n} |\nabla_{x_i} u_{U,q,n}(\cdot , \mu \mres U^c)|^2 \stackrel{\eqref{slice.energy}}{\le} C n |q|^2.
\end{align}
Estimates \eqref{slice.energy} and \eqref{slice.L2} and the definition of $\E_{\rho_0}\Ll[\, \cdot \, \Rr]$ immediately imply that $u_{u,q} \in \cH^1(U)$. This concludes the proof of Lemma \ref{lem:J}. 
\end{proof}

As announced, Lemma~\ref{lem:J} demonstrates that the optimizer for $\nu_*(U,q,\rho_0)$ in fact does not depend on $\rho_0$: regardless of the density, it is always the same $\psi_{U,q}$ whose canonical projections are described by this lemma. The only difference is that optimizers for $\nu_*(U,q,\cdot)$ at different densities receive point processes with different densities as their argument. 

\smallskip

We also stress that another immediate consequence of Lemma \ref{lem:J}, see also \eqref{slice.energy}, is that each maximizer $\psi_{U,q}$ satisfies the following improved energy inequality:
\begin{align}\label{energy.basic.slice}
\E_{\rho_0} \Ll[\frac{1}{\rho_0\vert U \vert}\int_{U} |\nabla \psi_{U,q}|^2 \, \d\mu \, \biggl| \, \mu(U) , \mu\mres U^c \Rr] \leq |q|^2 \frac{\mu(U)}{\rho_0 |U|},
\end{align}
for every $\mu\mres U^c$ and number of particles $\mu(U) \in \N$ fixed. Note that this inequality implies \eqref{energy.basic}.

In most of the paper, we keep the parameter $q$ fixed, and work with the domain $U = \cu_m$. We recall the notation $\psi_m := \psi_{\cu_m,q}$. 

\subsection{Coupling of point processes}\label{subsec:Couple}
When studying the regularity of the bulk diffusion matrix, it is useful to introduce a coupling between different densities. Recall that we keep $\rho_0 \in (0, \infty)$ fixed, and let $\mu \sim \mathrm{Poisson}(\rho_0)$ be the ``reference'' Poisson point process, with constant density $\rho_0$. For $\rho\ge 0$, we define another independent Poisson point process $\mu_\rho \sim \mathrm{Poisson}(\rho)$, which we think of as a small perturbation. Then we denote by $\P = \P_{\rho_0} \otimes \P_{\rho}$ the joint probability measure, with associated expectation $\E$, and we observe that ${\mu + \mu_\rho \sim \mathrm{Poisson}(\rho_0 + \rho)}$, by the superposition property for independent Poisson point processes.

Notice that the definition of the space $\cH^1$ actually depends on the density of particles, although this was kept implicit in the notation. When we want to resolve the ambiguity, we write $\cH^1(U, \mu)$ 
for the space as defined in \eqref{eq:defH}, and we write $\cH^1(U, \mu + \mu_\rho)$ for the same space but with density $(\rho_0 + \rho)$.  In line with the notation introduced in \eqref{e.def.fE}, we use a superscript $\rho$ to indicate when the measure argument of a function is taken to be $\mu + \mu_\rho$. For instance, when we write $\a^{\rho}$ in some expression, we always understand that it is evaluated as $\a(\mu + \mu_\rho, \cdot)$; the notation~$\a$ is understood to be evaluated at $\mu$ instead. The same convention applies as well to $\psi_m$ and $\psi_m^\rho$: the former represents $\psi_m(\mu)$ and the latter $\psi_m(\mu + \mu_\rho)$. As discussed in the previous subsection, the function $\psi_m^{\rho}$ can be interpreted as the optimizer of $\nu_*(\cu_m,q,\rho_0 + \rho)$. This notation allows us to write, for instance,
\begin{align*}
\nu_*(\cu_m, q, \rho_0 + \rho) = \E \Ll[\frac{1}{(\rho_0 + \rho) \vert \cu_m \vert}\int_{\cu_m} \Ll( -\frac 1 2 \nabla \psi^\rho_m \cdot \a^\rho \nabla \psi^\rho_m + q \cdot \nabla \psi^\rho_m \Rr) \, \d (\mu + \mu_\rho) \Rr].
\end{align*}

We can also define a quantity $\nu_*$ perturbed by adding a finite number of particles uniformly. We denote by $E \subset \N_+$ the index set, and write $\mu_E := \sum_{i \in E} \delta_{x_i}$. Throughout the paper we use the following compact notation for the integration with respect to the particles in $E$:
\begin{align}\label{eq:SymbolInte}
\int_{U^E} ( \cdots) := \int_{U^{\vert E \vert}} ( \cdots) \, \prod_{i \in E} \d x_i, \qquad \fint_{U^E} ( \cdots) :=  \frac 1 {|U|^{|E|}} \int_{U^{\vert E \vert}} ( \cdots) \, \prod_{i \in E} \d x_i,
\end{align}
with the understanding that, if $E = \emptyset$, then $\fint_{U^\emptyset}(\cdots) = (\cdots)$.

We define the function space $\cH^1(U, \mu + \mu_E)$ as the completion in $\cH^1(U, \mu + \mu_E)$ of the space of functions in $\cC^{\infty}(U)$ such that the norm
\begin{align*}
\norm{f}^2_{\cH^1(U, \mu + \mu_E)} := \int_{U^E}\Ll(\E_{\rho_0}\Ll[f^2(\mu + \mu_E)\Rr] + \E_{\rho_0}\Ll[\int_U \vert \nabla f(\mu + \mu_E, x) \vert^2 \, \d(\mu + \mu_E)(x) \Rr]\Rr),
\end{align*}
is finite. Similarly to the notation $\a^\rho$ discussed above, we use the shorthand notation $\a^E$ to denote the function $\a(\mu + \mu_E, \cdot)$. The dual problem $\nu^E_*(\cu_m, q, \rho_0)$ is defined as 
\begin{multline}\label{eq:DualE}
\nu^E_*(\cu_m, q, \rho_0) \\
:= \sup_{u \in \cH^1(\cu_m, \mu + \mu_E)} \fint_{(\cu_m)^E}\E_{\rho_0} \Ll[\frac{1}{\rho_0 \vert \cu_m \vert}\int_{\cu_m} \Ll( -\frac 1 2 \nabla u \cdot \a^E \nabla u + q \cdot \nabla u \Rr) \, \d (\mu+\mu_E) \Rr],
\end{multline}
and we denote its optimizer by $\psi^E_m$. Similarly to what was discussed for $\psi_m^\rho$ in the previous subsection, we have that $\psi^E_m$ coincides with the function $\psi_m(\mu + \mu_E)$, and we can always think of the superscript $E$ as indicating the operation of adding $\mu_E$ to the argument of the function, see \eqref{e.def.fE}.

%\subsection{Distinguishable vs.\ indistinguishable particles}

We have built the configuration space in order to capture the notion of indistinguishable particles: if we exchange the positions of two particles, the measure does not change. However, when perturbing the measure $\mu$ with the addition of $\mu_\rho$ (or $\mu_E$), the setting naturally introduces some amount of distinguishability between particles, as some come from the measure $\mu$ and some from the measure~$\mu_\rho$ (or $\mu_E$). Lemma~\ref{lem:J} has clarified in particular that ``nothing is gained'' in the optimization problem if we allow the particles to be distinguishable. We now ``project'' this statement into a form in which, roughly speaking, we can only distinguish from which measure (such as $\mu$, $\mu_\rho$ or $\mu_E$) a particle ``comes''.

\begin{proposition}\label{cor:var.formulation}
For all finite sets  $E, F \subset \N_+$, we have that
\begin{align}\label{eq:Harmonic1}
&\int_{(\cu_{m+1})^{E \cup F}}\E\Ll[\int_{\cu_m} (\nabla \psi_m^{\rho, F}  \cdot \a^E \nabla \psi^E_m -  \nabla\psi_m^{\rho,F} \cdot q) \, \d \mu  \Rr] = 0, \\
&\int_{(\cu_{m+1})^{E \cup F}}\E\Ll[\int_{\cu_m} (\nabla \psi_m^{F}  \cdot \a^{\rho,E} \nabla \psi^{\rho,E}_m -  \nabla\psi_m^{F} \cdot q)\, \d \mu \Rr]=0.
\label{eq:Harmonic2}
\end{align}
\end{proposition}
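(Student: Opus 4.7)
The plan is to derive both identities from the pointwise Euler-Lagrange equation satisfied by the relevant optimizer, combined with a decomposition of the integration measure and a short induction on $|E \cap F|$ to dispatch the residual boundary terms. The two identities are parallel, so I focus on \eqref{eq:Harmonic1}; \eqref{eq:Harmonic2} is handled by exactly the same scheme with the roles of $(\rho, E)$ and $F$ exchanged.

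By Lemma~\ref{lem:J}, applied at the configuration $\omega = \mu + \mu_E$ and pointwise in $(x_i)_{i \in E}$, the function $\psi_m^E = \psi_m(\mu + \mu_E)$ is the unique mean-zero optimizer of the variational problem with measure $\mu + \mu_E$, and hence satisfies
\[
\E_{\rho_0}\Ll[\int_{\cu_m}(\nabla \psi_m^E \cdot \a^E \nabla v - q \cdot \nabla v) \, \d(\mu + \mu_E)\Rr] = 0, \quad \forall v \in \cH^1(\cu_m, \mu + \mu_E).
\]
I would take $v = \psi_m^{\rho, F}$ (which lies in this space by the basic energy bound~\eqref{energy.basic}), split $\d(\mu + \mu_E) = \d\mu + \d\mu_E$, average over $\mu_\rho$, integrate over $(x_i)_{i \in E \cup F} \in (\cu_{m+1})^{E \cup F}$, and invoke the symmetry of $\a$: the $\d\mu$ piece then coincides with the left-hand side of \eqref{eq:Harmonic1}. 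It therefore suffices to show that the $\d\mu_E$ piece,
\[
\int_{(\cu_{m+1})^{E \cup F}}\sum_{i \in E}\1_{x_i \in \cu_m}\E\Ll[(\nabla \psi_m^{\rho, F} \cdot \a^E \nabla \psi_m^E - q \cdot \nabla \psi_m^{\rho, F})(x_i)\Rr]\prod_{j \in E \cup F} \d x_j,
\]
vanishes.

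For $i \in E \setminus F$, the absolute continuity of $\mu$ and $\mu_\rho$ and the independence of $x_i$ from the variables $(x_j)_{j \in F}$ together ensure that $x_i \notin \supp(\mu + \mu_\rho + \mu_F)$ almost surely; by convention~\eqref{e.nabla.convention} one has $\nabla \psi_m^{\rho, F}(x_i) = 0$ a.s., so this contribution vanishes. The hard case is $i \in E \cap F$, where $x_i \in \supp \mu_F$ deterministically and the gradient is nonzero. To deal with it, isolate $\delta_{x_i}$ in both measures by writing $\mu_E = \delta_{x_i} + \mu_{E \setminus \{i\}}$ and $\mu_F = \delta_{x_i} + \mu_{F \setminus \{i\}}$, so that the integrand at $x_i$ takes the form $h(y, \mu+\delta_y)\big\vert_{y=x_i}$ for a function $h$ depending only on $\mu_{E \setminus \{i\}}$, $\mu_{F \setminus \{i\}}$, and $\mu_\rho$. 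Mecke's formula then yields $\int_{\cu_m}\E[h(y, \mu+\delta_y)]\,\d y = \rho_0^{-1}\E[\int_{\cu_m} h(y, \mu)\,\d\mu(y)]$, and a bookkeeping of indices identifies the result as $\rho_0^{-1}$ times the left-hand side of \eqref{eq:Harmonic1} with $(E, F)$ replaced by $(E \setminus \{i\}, F \setminus \{i\})$. A short induction on $|E \cap F|$, with the support-based argument above serving as the base case $|E \cap F| = 0$, therefore concludes the proof.

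For \eqref{eq:Harmonic2}, I would run the same strategy starting from the first variation of $\psi_m^{\rho, E}$ (the optimizer of $\nu_*^{\rho, E}$) with test function $\psi_m^F$, now decomposing $\d(\mu + \mu_\rho + \mu_E) = \d\mu + \d\mu_\rho + \d\mu_E$. The $\d\mu_\rho$ contribution vanishes a.s. since $\mu_\rho$ is independent of both $\mu$ and $\mu_F$, and the $\d\mu_E$ contribution is dispatched by the identical Mecke-plus-induction scheme. The main obstacle throughout is the same: arguing that the $i \in E \cap F$ boundary terms collapse onto a reduced version of the very identity being proved, which is what makes the induction necessary.
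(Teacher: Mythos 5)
There is a genuine gap at the very first step. You invoke Lemma~\ref{lem:J} ``pointwise in $(x_i)_{i\in E}$'' to assert that
\begin{equation*}
\E_{\rho_0}\Ll[\int_{\cu_m}(\nabla\psi_m^E\cdot\a^E\nabla v - q\cdot\nabla v)\,\d(\mu+\mu_E)\Rr] = 0
\end{equation*}
for a.e.\ fixed $(x_i)_{i\in E}$, but this does not follow. The slice-wise optimality in Lemma~\ref{lem:J} says that $\psi_{m,n+|G|}(\cdot,\mu\mres(\cu_m)^c + \mu_{E\setminus G})$ — where $G\subset E$ indexes the $E$-particles landing in $\cu_m$ — maximizes $\mcl J_{n+|G|}$ over $H^1((\cu_m)^{n+|G|})$; the resulting first-variation identity is an integral over all $n+|G|$ inside coordinates, including those at positions $(x_i)_{i\in G}$. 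It does not localize when the $(x_i)_{i\in G}$ are frozen and the integration runs over the remaining $n$ coordinates only, just as the weak Euler--Lagrange identity of an $n$-variable Dirichlet problem does not hold on a lower-dimensional slice; equivalently, the restriction of $\psi_{m,n+|G|}$ at frozen $(x_i)_{i\in G}$ is not the maximizer of the pinned $n$-coordinate problem. To reach the integrated identity you actually use downstream (that the $\d\mu$- and $\d\mu_E$-pieces together give zero after integrating over $(\cu_{m+1})^{E\cup F}$), one must condition on $\mu(\cu_m)=n$ and $\mu\mres(\cu_m)^c$, split the $E$-integral according to which of the $x_i$ fall inside $\cu_m$, and verify that $\psi_m^{\rho,F}$ — which is \emph{not} symmetric in all $n+|G|$ inside particles, since it only ``sees'' those indexed by $G\cap F$ — is nonetheless admissible for $\mcl J_{n+|G|}$; the last point needs the conditional estimate~\eqref{energy.basic.slice} rather than the averaged bound~\eqref{energy.basic} you cite. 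This reconstruction is exactly Steps~1--3 of the paper's proof, and your parenthetical ``by the basic energy bound'' does not supply it.

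Granting the integrated identity, your treatment of the $\d\mu_E$ residual is correct and a genuinely different route through the final step: the paper observes within a slice that only the $n+|F\cap G|$ particles charged by $\nabla\psi_m^{\rho,F}$ contribute, and uses permutation invariance to deduce $B_n=\frac{n}{n+|F\cap G|}\tilde B_n$, hence $B_n=0$; you instead dispatch the $i\in E\setminus F$ terms via the support convention~\eqref{e.nabla.convention}, and reduce the $i\in E\cap F$ terms to lower-order instances of \eqref{eq:Harmonic1} through Mecke's identity~\eqref{mecke}, closing with an induction on $|E\cap F|$. Mecke's identity is the probabilistic face of the same exchangeability the paper exploits, so the two arguments are close in spirit; but your induction scheme is a clean alternative once the starting integrated identity is properly established.
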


Before turning to the proof, we point out some possibly surprising features of this result. First, as pointed out above, these relations differ from \eqref{e.first.var} in that the test functions can distinguish between different types of particles: for instance, the function $\psi_m^F$ depends only on $\mu + \mu_F$, and cannot be thought of as a function of $\mu  +\mu_\rho + \mu_E$, as one might hope at first. Second, the integration of the additional particles indexed by $E \cup F$ is carried over the larger domain $\cu_{m+1}$, instead of the domain $\cu_m$ that one might expect. And finally, we integrate over $\mu$ only, while one might at first expect \eqref{eq:Harmonic1} and \eqref{eq:Harmonic2} to be integrated against $\mu + \mu_E$ and $\mu + \mu_\rho + \mu_E$ respectively. The proof below will need to address each of these aspects. The particular form of \eqref{eq:Harmonic1} and \eqref{eq:Harmonic2} we have chosen here will turn out to be the most convenient later on: for instance, we will often need to study linear combinations of $\psi_m^E$'s for different sets $E$, and it is most convenient that the measure against which we integrate does not depend on $E$. Similarly, when we study the effect of a change in the density, some additional particles that fall in a layer around $\cu_m$ will need to be taken into account, and it is more convenient that \eqref{eq:Harmonic1} and \eqref{eq:Harmonic2} take such perturbations into account.

\begin{proof}[Proof of Proposition~\ref{cor:var.formulation}] We first show \eqref{eq:Harmonic1}. The proof can be divided into 4 steps.

\textit{Step 1: Decomposition.}
For $E, F \subset \N$ fixed, we split $E \cup F = E \sqcup (F\setminus E)$ and write $\mu_F = \mu_{F \cap E} + \mu_{F\setminus E}$. By Fubini's theorem we reorganize
\begin{equation}
\begin{aligned}\label{cor:var.formulation.1}
\int_{(\cu_{m+1})^{E \cup F}}&\E\biggl[ \int_{\cu_m} (\nabla \psi_m^{\rho, F}  \cdot \a^{E} \nabla \psi^{E}_m -  \nabla\psi_m^{\rho,F} \cdot q)\, \d \mu \biggr]\\
&= \int_{(\cu_{m+1})^{F\setminus E}} \E \Ll[ \sum_{n\in \N} \P_{\rho_0}[\mu(\cu_m)=n] \, A_n(\mu_\rho, \mu\mres(\cu_m)^c, \mu_{F \setminus E}) \Rr],
\end{aligned}
\end{equation}
where for every $n\in \N$ we defined
\begin{align*}
&A_n(\mu_\rho, \mu\mres(\cu_m)^c, \mu_{F\setminus E})\\
&: = \int_{(\cu_{m+1})^{E}} \E\Ll[\int_{\cu_m} (\nabla \psi_m^{\rho,F} \cdot \a^E \nabla \psi^E_m - \nabla \psi_m^{\rho,F} \cdot q) \, \d \mu \, \biggl |  \, \mu_\rho, \,  \mu\mres{(\cu_m)^c}, \, \mu(\cu_m)= n  \Rr].
\end{align*}
We note that only $\psi_m^{\rho,F}$ depends on the realization of $\mu_\rho$. Hence, the previous term can be rewritten as
\begin{align*}
&A_n(\mu_\rho, \mu\mres(\cu_m)^c, \mu_{F\setminus E})\\
&= \int_{(\cu_{m+1})^E}\E_{\rho_0}\Ll[\int_{\cu_m} (\nabla \psi_m^{\rho,F} \cdot \a^E \nabla \psi^E_m - \nabla \psi_m^{\rho,F} \cdot q) \, \d \mu \, \biggl | \,  \mu\mres{(\cu_m)^c}, \, \mu(\cu_m)= n \Rr],
\end{align*}
in which the measures $\mu_\rho$ and $\mu_{F\setminus E}$ in $\psi_m^{\rho,F}$ are fixed. 

We now apply a further decomposition of $A_n$. Let $G \subset E$ be the set of particles in $\cu_m$, then the integration becomes
\begin{align*}
\int_{(\cu_{m+1})^E} = \sum_{G \subset E} \int_{(\cu_{m+1}\setminus \cu_m)^{E\setminus G}} \int_{(\cu_m)^{G}},    
\end{align*}
and we can write
\begin{multline}\label{cor:var.formulation.2}
A_n(\mu_\rho, \mu\mres(\cu_m)^c, \mu_{F\setminus E}) \\
= \sum_{G \subset E} \int_{(\cu_{m+1}\setminus \cu_m)^{E\setminus G}} B_n(\mu_\rho, \mu\mres(\cu_m)^c, \mu_{F\setminus E}, \mu_{E \setminus G}),
\end{multline}
where the quantity $B_n(\mu_\rho, \mu\mres(\cu_m)^c, \mu_{F\setminus E}, \mu_{E \setminus G})$ is defined as 
\begin{multline*}
B_n(\mu_\rho, \mu\mres(\cu_m)^c, \mu_{F\setminus E}, \mu_{E \setminus G}) \\
 := \int_{(\cu_m)^{G}} \E_{\rho_0}\Ll[\int_{\cu_m} (\nabla \psi_m^{\rho,F} \cdot \a^E \nabla \psi^E_m - \nabla \psi_m^{\rho,F} \cdot q) \, \d \mu  \, \biggl | \, \mu\mres{(\cu_m)^c}, \, \mu(\cu_m)= n \Rr].    
\end{multline*}

\textit{Step 2: Finding the associated variational problem.}
We now claim that, for each $G \subset E$ and every $\mu_{E \setminus G}$,
\begin{equation}
\begin{aligned}\label{vanishing.slices}
B_n(\mu_\rho, \mu\mres(\cu_m)^c, \mu_{F\setminus E}, \mu_{E \setminus G}) = 0.
\end{aligned}
\end{equation}
To prove this, we begin by specifying where  the functions $\psi_m^E$ and $\a^E$ are evaluated. Splitting $\mu_E = \mu_{E\cap G} + \mu_{E\setminus G}$ and recalling the definition \eqref{eq:Projection} of the canonical projection for $\psi_m$, we note that the term $\psi_m^{E}$ in the expectation corresponds to $\psi_{m, n+|G|}( \, \cdot \, , \mu\mres (\cu_m)^c + \mu_{E\setminus G})$. By Lemma \ref{lem:J}, this function is a maximizer for the functional $\mcl J_{n+ |G|}(\cdot, \cu_m, q, \mu\mres (\cu_m)^c + \mu_{E\setminus G} )$. Moreover, we notice that the left-hand side of \eqref{vanishing.slices} is quite similar to the variational formulation for the optimization problem for $\mcl J_{n+ |G|}(\cdot, \cu_m, q, \mu\mres (\cu_m)^c + \mu_{E\setminus G} )$ that is tested against the function $\psi_m^{\rho,F}$, so we define
\begin{multline*}
\tilde{B}_n(\mu_\rho, \mu\mres(\cu_m)^c, \mu_{F\setminus E}, \mu_{E \setminus G}) \\
:=
\int_{(\cu_m)^{G}} \E_{\rho_0}\Ll[\int_{\cu_m} (\nabla \psi_m^{\rho,F} \cdot \a^E \nabla \psi^E_m - \nabla \psi_m^{\rho,F} \cdot q) \, \d (\mu + \mu_G) \, \biggl | \, \mu\mres{(\cu_m)^c}, \, \mu(\cu_m)= n \Rr]. 
\end{multline*}
In the following, we will 
\begin{itemize}
    \item verify that $\psi_m^{\rho,F}$ is an admissible test function for the optimization problem for $\mcl J_{n+ |G|}(\cdot, \cu_m, q, \mu\mres (\cu_m)^c + \mu_{E\setminus G} )$, showing that $\tilde{B}_n = 0$;
    \item deduce from $\tilde{B}_n = 0$ the claim~\eqref{vanishing.slices}. 
\end{itemize}

\textit{Step 3: The test function is admissible.}
 Conditioned on $\mu(\cu_m) = n$, we write $\mu \mres \cu_m$ and $\mu_{F}$ as 
\begin{align*}
\mu \mres \cu_m = \sum_{i=1}^n \delta_{y_i} , \qquad 
\mu_F = \sum_{i=1}^{\vert F\cap G \vert}\delta_{x_{\alpha_i}} + \sum_{j=1}^{\vert F \setminus G\vert} \delta_{x_{\beta_j}}.
\end{align*}
Then conditioned on $\mu(\cu_m) = n$, for $\P$-almost every realization of $\mu \mres (\cu_m)^c, \mu_\rho$,  and (Lebesgue-) almost every realization of $\mu_{F\setminus E}$, the function
\begin{align*}
(y_1, \cdots, y_n, x_{\alpha_1}, \cdots, x_{\alpha_{\vert F \cap G\vert}}) \mapsto  \psi_{m}\Ll( \sum_{i=1}^n \delta_{y_i} +  \sum_{i=1}^{\vert F\cap G \vert}\delta_{x_{\alpha_i}} + \mu_{F \setminus G} + \mu \mres (\cu_m)^c + \mu_\rho \Rr),  
\end{align*}
 belongs to $H^1((\cu_m)^{n+\vert F \cap G\vert})$ thanks to \eqref{energy.basic.slice}. Thus it also belongs to  $H^1((\cu_m)^{n+\vert G\vert})$ with respect to the integration $(\mu + \mu_G)$, and it is an admissible function for the optimization problem for $\mcl J_{n+ |G|}(\cdot, \cu_m, q, \mu\mres (\cu_m)^c + \mu_{E\setminus G} )$. This implies  that for $\P$-almost every realization of $\mu \mres (\cu_m)^c, \mu_\rho$, and Lebesgue-almost every realization of $\mu_{F\setminus E}$, we have
 \begin{align*}
\tilde{B}_n(\mu_\rho, \mu\mres(\cu_m)^c, \mu_{F\setminus E}, \mu_{E \setminus G}) = 0.     
 \end{align*}

\textit{Step 4: Passage from $\tilde{B}_n = 0$ to $B_n = 0$.} 
 We stress that from the gradient of $\psi^{\rho,F}_m$ in $\tilde{B}_n$, out of the $(n + \vert G \vert)$ particles in $(\mu + \mu_G)$ only those in the support of $(\mu + \mu_{F \cap G})$ contribute. Thus we can rewrite  $\tilde{B}_n$ as 
\begin{multline*}
\tilde{B}_n(\mu_\rho, \mu\mres(\cu_m)^c, \mu_{F\setminus E}, \mu_{E \setminus G}) \\
=
\int_{(\cu_m)^{G}} \E_{\rho_0}\Ll[\int_{\cu_m} (\nabla \psi_m^{\rho,F} \cdot \a^E \nabla \psi^E_m - \nabla \psi_m^{\rho,F} \cdot q) \, \d (\mu + \mu_{F \cap G}) \, \biggl | \, \mu\mres{(\cu_m)^c}, \, \mu(\cu_m)= n \Rr]. 
\end{multline*}
Notice now that the integrals above give the same contribution for every particle in ${(\mu + \mu_{F \cap G})}$, because $\psi_m^{\rho,F}, \psi^E_m$ and $\a^E$ are all invariant under permutations for these particles. As a consequence, we have 
\begin{align*}
&B_n(\mu_\rho, \mu\mres(\cu_m)^c, \mu_{F\setminus E}, \mu_{E \setminus G}) \\
&= \Ll(\frac{n}{n + \vert F \cap G\vert}\Rr)  \tilde{B}_n(\mu_\rho, \mu\mres(\cu_m)^c, \mu_{F\setminus E}, \mu_{E \setminus G}) \\
&= 0.
\end{align*}
We thus established \eqref{vanishing.slices}. Then we put it back to \eqref{cor:var.formulation.1} and \eqref{cor:var.formulation.2}, which implies that the left-hand side of  \eqref{cor:var.formulation.1} is zero and concludes the proof of \eqref{eq:Harmonic1}.

\medskip

We now turn to \eqref{eq:Harmonic2}. The proof is similar and one can repeat the 4 steps above. The only difference is that we also need to do the expansion according to the number of particles $\mu_{\rho}(\cu_m)$ and we skip the details. 
\end{proof}

\begin{remark}  %\label{}
The proof in fact yields the following stronger result: for all finite sets  $E, F \subset \N_+$, and $G \subset E$, we have that
\begin{align*}
&\int_{(\cu_{m+1})^{E \cup F}}\E\Ll[\int_{\cu_m} (\nabla \psi_m^{\rho, F}  \cdot \a^E \nabla \psi^E_m -  \nabla\psi_m^{\rho,F} \cdot q) \, \d (\mu + \mu_G)  \Rr] = 0, \\
&\int_{(\cu_{m+1})^{E \cup F}}\E\Ll[\int_{\cu_m} (\nabla \psi_m^{F}  \cdot \a^{\rho,E} \nabla \psi^{\rho,E}_m -  \nabla\psi_m^{F} \cdot q)\, \d (\mu + \mu_G) \Rr]=0.
\end{align*} 
\end{remark}

\begin{remark}\label{rmk:Combination}
We point out that, choosing $\rho=0$ in \eqref{eq:Harmonic1}, we recover the same identity with $\psi_m^{\rho,F}$ replaced by $\psi_m^F$. From this, we may also change the density of the distribution of $\mu$ from $\rho_0$ to $\rho_0 +\rho$, and obtain the analogue of \eqref{eq:Harmonic2} with $\psi_m^{F}$ replaced by $\psi_m^{\rho,F}$.

Finally, we note that, by linearity, \eqref{eq:Harmonic1} and \eqref{eq:Harmonic2} are also true if we use test functions of the form $D_F \psi_m^{G\setminus F}$ or $D_F \psi_m^{\rho, G\setminus F}$, $F, G \subset \N$ and we replace the outer integrals by $\int_{(\cu_{m+1})^{E \cup F \cup G}}$.
\end{remark}

\section{First-order differentiability}\label{s.C11}

In this section we prove Theorem~\ref{t.C11}. We explain at first its main ingredient, and it also gives us the opportunity to exemplify the sort of arguments that will be generalized later to obtain Theorem~\ref{t.smooth}. 

We recall that we have fixed a vector $q \in \Rd$, and that $\psi_m$ denotes the optimizer in the  definition \eqref{eq:defNu} of the quantity $\nu_*(\cu_m, q, \rho_0)$. We use the notation $\lesssim$ for $\leq C \times$ with the constant $C$ depending only $d$, $\Lambda$ and the length of the vector $q \in \R^d$.

The quantity that we will study is the difference between the diffusion coefficients at different densities
\begin{align}\label{eq:defPer}
\Delta^\rho(\rho_0)  := q \cdot \ab^{-1}(\rho_0 + \rho) q - q \cdot \ab^{-1}(\rho_0) q,
\end{align}
as well as, for each $m\in \N$, its finite-volume analogue
\begin{align}\label{eq:defPerApp}
 \Delta^\rho_m(\rho_0) := q \cdot  \ab_*^{-1}(\cu_m, \rho_0 + \rho) q - q \cdot  \ab_*^{-1}(\cu_m, \rho_0) q.
\end{align}

In order to prove Theorem \ref{t.C11}, we first establish its finite-volume version for~$\Delta^\rho_m(\rho_0)$, with estimates that hold uniformly over $m$, and then pass to the limit. We recall that the notation $\psi_m^{\{1\}}, \aone$ is defined in \eqref{e.def.fE}.

\begin{proposition}\label{p.C11.m}
For any $\rho, \rho_0 > 0$ fixed, it holds
\begin{align}\label{limit.increments}
\Delta^\rho(\rho_0) = \lim_{m \to \infty} \Delta^\rho_m(\rho_0).
\end{align}
Moreover, uniformly over $m\in \N$, we have
\begin{align}\label{expansion.11.m}
| \Delta^\rho_m(\rho_0) -  c_{1,m}(\rho_0) \rho|  \lesssim  \rho^2,
\end{align}
with
\begin{align}\label{c_1.m}
&c_{1,m}(\rho_0) := \int_{\R^d} \E \Ll[ \frac{1}{\rho_0 |\cu_m|} \int_{\cu_m} \nabla \psi_m \cdot (\a - \aone)\nabla \psi_m^{\{1\}} \, \d \mu \Rr] \, \d x_1.
\end{align}
\end{proposition}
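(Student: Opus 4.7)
The strategy is to derive an exact identity for $\Delta^\rho_m$ in a form amenable to Taylor expansion in $\rho$, to extract the coefficient of $\rho$ and identify it as $c_{1,m}$, and finally to bound the remainder uniformly in $m$.

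I would first combine the energy identity \eqref{e.energy.identities} (applied at densities $\rho_0$ and $\rho_0 + \rho$) with Slivnyak--Mecke (Campbell's) formula for Poisson point processes, obtaining
\begin{equation*}
q \cdot \ab_*^{-1}(\cu_m, \rho_0 + \rho) q = \frac{1}{|\cu_m|} \int_{\cu_m} \E\Ll[ q \cdot \nabla \psi_m(\mu + \mu_\rho + \delta_z, z) \Rr]\, dz,
\end{equation*}
together with the analogous expression at $\rho = 0$. Subtracting and applying Slivnyak--Mecke in reverse yields the representation
$\Delta^\rho_m = (\rho_0|\cu_m|)^{-1} \E[\int_{\cu_m} q \cdot (\nabla \psi_m^\rho - \nabla \psi_m)\, d\mu]$. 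Combining the two identities of Proposition~\ref{cor:var.formulation} at $E = F = \emptyset$, via the symmetry of $\a$ and $\a^\rho$, then converts this into the key identity
\begin{equation*}
\Delta^\rho_m = \frac{1}{\rho_0|\cu_m|}\, \E\Ll[\int_{\cu_m} \nabla \psi_m \cdot (\a - \a^\rho)\, \nabla \psi_m^\rho\, d\mu\Rr].
\end{equation*}

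Conditionally on $\mu$, the right-hand side is an expectation over $\mu_\rho$ of a functional depending only on $\mu_\rho \mres B_{1/2}(\cu_m)$, by \eqref{a.local} and the $\mcl F_{B_{1/2}(\cu_m)}$-measurability of $\psi_m$. For a local Poisson functional $F$, one has the expansion
\begin{equation*}
\E[F(\mu_\rho)] = \sum_{j=0}^{\infty} \frac{\rho^j}{j!} \int_{(\R^d)^j} D_{\dbint{1,j}} F(\emptyset)(y_1, \ldots, y_j)\, dy_1 \cdots dy_j,
\end{equation*}
which follows by resumming the exponential factor in the elementary Poisson formula. Since $D_{\{1\}}[(\a - \a^\#)\nabla \psi_m^\#](\emptyset)(y_1) = (\a - \aone)\nabla \psi_m^{\{1\}}$, the $j = 1$ term equals precisely $\rho \cdot \rho_0|\cu_m|\, c_{1,m}(\rho_0)$ as defined in \eqref{c_1.m}.

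The main obstacle is then bounding the $j \geq 2$ contributions by $O(\rho^2)$ uniformly in $m$. By the Leibniz rule for discrete differences, $D_{\dbint{1,j}}[(\a - \a^\#)\nabla \psi_m^\#]$ decomposes into a sum over $A \cup B = \dbint{1,j}$ with $A \neq \emptyset$ of terms $(-D_A \a)(D_B \nabla \psi_m)$. The local factor $D_A \a$ is compactly supported on $y_A \subset B_{1/2}(z)$ by \eqref{a.local}, bounding its $y_A$-integral by $C^{|A|}$. The non-local factor $D_B \nabla \psi_m$ is controlled via the elementary $L^2$ estimate $\E[\int_{\cu_m}|\nabla \psi_m^R|^2\, d\mu] \leq (\rho_0|\cu_m| + |R|)|q|^2$, which follows from \eqref{energy.basic.slice} applied to $\psi_m$ viewed as the optimizer at an augmented configuration. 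Combining these via Cauchy--Schwarz, together with cancellations obtained by further applications of the variational identities of Proposition~\ref{cor:var.formulation} at nontrivial $E, F$ (needed to avoid the $m$-dependent losses that arise from naive bounds on the non-local gradient), should yield bounds of the form $C^j\rho^j/j!$ on each normalized contribution. Summing over $j \geq 2$ then gives the claimed $O(\rho^2)$, with a constant depending only on $d$, $\Lambda$ and $|q|$.

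Finally, \eqref{limit.increments} is immediate from the definition \eqref{eq:defab}: both $\ab_*(\cu_m, \rho_0)$ and $\ab_*(\cu_m, \rho_0+\rho)$ converge respectively to $\ab(\rho_0)$ and $\ab(\rho_0+\rho)$, and matrix inversion is continuous on the uniformly elliptic range by \eqref{a.elliptic}, so $\Delta^\rho_m \to \Delta^\rho$ as $m \to \infty$.
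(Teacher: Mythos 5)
Your derivation of the representation formula for $\Delta_m^\rho$ and the extraction of the linear term as $\rho\, c_{1,m}(\rho_0)$ are correct and essentially reproduce the paper's Lemma~\ref{l.increment} (via~\eqref{decomp.delta} and Proposition~\ref{cor:var.formulation}). The genuine gap lies in the remainder estimate. You propose to sum the tail $\sum_{j\geq 2}$ of the Poisson series expansion, asserting that the variational identities yield bounds of order $C^j\rho^j/j!$ on the $j$-th contribution — which would require the coefficients $c_{j,m}$ of~\eqref{e.def.ckm} to satisfy a uniform \emph{geometric} bound $|c_{j,m}|\le C^j$ in $m$ and $\rho_0$. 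The paper does not establish this, and in fact flags the opposite: the uniform bounds that the key estimate (Proposition~\ref{prop:KeyEstimate}) delivers grow like $(j!)^2$, so that "the series $\sum_{j>k}\frac{C_j}{j!}\rho^j$ will not be summable" (see the remark after Theorem~\ref{t.smooth} and Step~3 of the proof of Proposition~\ref{prop:SmoothFinite}). Direct summation of the tail therefore fails with the available estimates, and the claim of a geometric coefficient bound cannot be justified by the paper's machinery.

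Two repairs are known. The route of Section~\ref{s.higher-order} first shows (Lemma~\ref{lem:Expansion}) that $\rho\mapsto\Delta_m^\rho$ is analytic, with $m$-dependent constants, and then applies Taylor's theorem with integral remainder, $\Delta_m^\rho = c_{1,m}(\rho_0)\,\rho + \int_0^\rho c_{2,m}(\rho_0+s)\, s\, \d s$, so that only the $m$-uniform bound on the single coefficient $c_{2,m}$ is needed — not control of the whole tail. Alternatively, the proof in Section~\ref{s.C11} avoids the infinite series entirely: starting from the representation formula, it conditions on $\mu_\rho(\cu+z)\in\{0,1,\geq 2\}$, extracts the linear term from the event $\{\mu_\rho(\cu+z)=1\}$ via Lemma~\ref{l.aux}, and bounds both the $\geq 2$ contribution and the cross-term $\nabla\psi_m^{\rho,\{1\}}-\nabla\psi_m^{\{1\}}$ by $O(\rho^2)$ using only the few corrector estimates of Lemma~\ref{l.corrector.1} (which are exactly the $|E|\le 2$, $|F|\le 1$ cases of Proposition~\ref{prop:KeyEstimate}). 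Either mechanism closes the gap; your sketch as written does not.
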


The proof of Proposition \ref{p.C11.m} relies on two ingredients. The first is the following representation formula for the difference term $\Delta_m^\rho(\rho_0)$.
\begin{lemma}%[Representation of increments and Lipschitz estimate]
\label{l.increment}
For every $m \in \N$ and $\rho > 0$, we have
\begin{align}\label{incr.rep.1}
\Delta_m^\rho(\rho_0) = \E\Ll[ \frac{1}{\rho_0\vert \cu_m \vert} \int_{\cu_m} \nabla \psi_m \cdot(\a - \a^{\rho})\nabla \psi_m^\rho \, \d \mu \Rr].
\end{align}
\end{lemma}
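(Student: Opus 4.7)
The strategy is to combine two ingredients: (i) a coupling-consistency rewriting of $q\cdot \ab_*^{-1}(\cu_m,\rho_0+\rho)\,q$ that brings it onto the same normalisation $\tfrac{1}{\rho_0|\cu_m|}$ and the same integration measure $\d\mu$ as $q\cdot \ab_*^{-1}(\cu_m,\rho_0)\,q$; (ii) the first-variation identities of Proposition~\ref{cor:var.formulation} specialised to $E=F=\emptyset$, which convert a $q$-linear expression on the right into the bilinear $(\a-\a^\rho)$-form sought in~\eqref{incr.rep.1}.

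For (i), the key observation is that $\psi_m$ depends on its particle argument only through the underlying unordered configuration, so that $q\cdot \nabla\psi_m^\rho(x)=q\cdot \nabla\psi_m(\mu+\mu_\rho,x)$ at a particle $x\in\supp(\mu+\mu_\rho)$ does not depend on whether $x$ comes from $\mu$ or from $\mu_\rho$. Combining this with the Poisson superposition/colouring property (each point of $\mu+\mu_\rho$ belongs to $\mu$ independently with probability $\rho_0/(\rho_0+\rho)$) gives
\begin{align*}
\E\Ll[\int_{\cu_m} q\cdot\nabla\psi_m^\rho\,\d\mu\Rr] \;=\; \frac{\rho_0}{\rho_0+\rho}\,\E\Ll[\int_{\cu_m} q\cdot\nabla\psi_m^\rho\,\d(\mu+\mu_\rho)\Rr].
\end{align*}
Dividing by $\rho_0|\cu_m|$ and identifying, via \eqref{e.energy.identities} applied in the coupled problem at density $\rho_0+\rho$, the right-hand side with $q\cdot \ab_*^{-1}(\cu_m,\rho_0+\rho)\,q$, and combining with the corresponding formula at $\rho=0$, one obtains
\begin{align*}
\Delta_m^\rho(\rho_0) \;=\; \frac{1}{\rho_0|\cu_m|}\,\E\Ll[\int_{\cu_m} (\nabla\psi_m^\rho-\nabla\psi_m)\cdot q\,\d\mu\Rr].
\end{align*}

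For (ii), specialising \eqref{eq:Harmonic1} and \eqref{eq:Harmonic2} to $E=F=\emptyset$ yields respectively
\begin{align*}
\E\Ll[\int_{\cu_m} \nabla\psi_m^\rho\cdot\a\,\nabla\psi_m\,\d\mu\Rr]=\E\Ll[\int_{\cu_m}\nabla\psi_m^\rho\cdot q\,\d\mu\Rr],
\end{align*}
and
\begin{align*}
\E\Ll[\int_{\cu_m} \nabla\psi_m\cdot\a^\rho\nabla\psi_m^\rho\,\d\mu\Rr]=\E\Ll[\int_{\cu_m}\nabla\psi_m\cdot q\,\d\mu\Rr].
\end{align*}
Subtracting the first from the second, and using symmetry of $\a$ to rewrite $\nabla\psi_m^\rho\cdot\a\,\nabla\psi_m=\nabla\psi_m\cdot\a\,\nabla\psi_m^\rho$, the right-hand side of the display for $\Delta_m^\rho(\rho_0)$ is transformed into $\frac{1}{\rho_0|\cu_m|}\,\E[\int_{\cu_m}\nabla\psi_m\cdot(\a-\a^\rho)\nabla\psi_m^\rho\,\d\mu]$, which is exactly~\eqref{incr.rep.1}.

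The only non-algebraic step is the coupling/colouring identity in (i) that matches the two normalisations; the remainder is a direct consequence of the first-variation identities that Proposition~\ref{cor:var.formulation} has been tailored to supply, the useful feature being precisely that there the outer integrations are taken against $\d\mu$ rather than $\d(\mu+\mu_\rho)$, so that the two harmonic relations can be combined directly without having to fuss with different particle measures.
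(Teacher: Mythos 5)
Your proof is correct and follows essentially the same path as the paper: rewrite $\Delta_m^\rho(\rho_0)$ via the energy identities \eqref{e.energy.identities}, rebalance the normalisations so that both terms are integrated against the common reference measure $\d\mu$, and then apply the $E=F=\emptyset$ instances of \eqref{eq:Harmonic1} and \eqref{eq:Harmonic2} together with the symmetry of $\a$. The only (minor) difference is that for the rebalancing step the paper invokes Mecke's identity \eqref{mecke} twice (once with respect to $\mu_\rho$ and once with respect to $\mu$), whereas you use the Poisson colouring/thinning property; the two arguments produce the same intermediate identity and are interchangeable here.
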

One may find that \eqref{incr.rep.1} and \eqref{c_1.m} look quite similar, which explains that $c_{1,m}(\rho_0)$ is indeed its first order approximation. To verify this approximation, we also need some estimate, which is the second ingredient for the proof of Proposition \ref{p.C11.m}. The next lemma allows us to compare the behavior of the optimizers $\psi_m, \psi_m^\rho$ when the measures $\mu$ or $\mu+ \mu_\rho$ are perturbed by one or two additional particles. Given $E \subset \N_+$, we recall the definitions \eqref{e.def.diff} and \eqref{e.def.DE} for the finite difference $D_E$, and the notation for the integrals $\int_{U^E}$ in \eqref{eq:SymbolInte} .
\begin{lemma}%[Corrector estimates for $1$ and $2$ additional particles]
\label{l.corrector.1} 
For every $m\in \N$ and $E, F \subset \N_+$ with $|E| \leq 2$ and $|F| \leq 1$, we have 
\begin{align}\label{corrector.1.a}
\int_{(\R^d)^{E}}\E \Ll[ \frac{1}{\rho_0\vert \cu_m \vert}\int_{\cu_m} |\nabla D_{E}\psi_m |^2 \, \d \mu  \Rr]  \lesssim 1,\\
\E \Ll[ \frac{1}{\rho_0\vert \cu_m \vert}\int_{\cu_{m}} \Ll|\int_{(\R^d)^F} \nabla D_{F}\psi_m \, \Rr|^2  \, \d\mu \Rr] \lesssim 1. \label{corrector.1.b}
\end{align}
\end{lemma}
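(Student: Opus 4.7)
My plan is to test the variational identities of Remark~\ref{rmk:Combination} against a carefully chosen function---namely $v = D_E \psi_m$ for~\eqref{corrector.1.a} and $v = \Phi := \int_{\R^d} D_{\{1\}} \psi_m \, \d x_1$ for~\eqref{corrector.1.b}---and then absorb the quadratic ``main part'' on the left-hand side via uniform ellipticity. The base case $|E|=0$ of~\eqref{corrector.1.a} (respectively $|F|=0$ of~\eqref{corrector.1.b}) is immediate from the basic Dirichlet estimate~\eqref{energy.basic}, so I would proceed by induction on $|E|$.

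For the inductive step of~\eqref{corrector.1.a}, I set $J_E := \int_{(\R^d)^E} \E\Ll[\int_{\cu_m} |\nabla D_E \psi_m|^2 \, \d\mu\Rr]$. Taking the signed combination over subsets $E'\subseteq E$ of the identities in Remark~\ref{rmk:Combination} with test function $v = D_E \psi_m$ (so that the outer integrals collapse to $\int_{(\cu_{m+1})^E}$, the $q$-terms cancel, and the remaining terms assemble into a $D_E$-finite difference), I would reach
\begin{equation*}
\int_{(\cu_{m+1})^E} \E\Ll[\int_{\cu_m} \nabla D_E \psi_m \cdot D_E\bigl(\a^\# \nabla \psi_m^\#\bigr) \, \d\mu\Rr] = 0.
\end{equation*}
The discrete Leibniz rule then expands
\begin{equation*}
D_E\bigl(\a^\# \nabla \psi_m^\#\bigr) = \a^E \nabla D_E \psi_m + \sum_{\emptyset \neq F \subseteq E} (D_F \a)^{E\setminus F}\, \nabla D_{E \setminus F} \psi_m,
\end{equation*}
and uniform ellipticity $\a^E \ge \mathrm{Id}$ absorbs the $F=\emptyset$ term to produce $J_E$ on the left.

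The crucial observation is that by finite-range dependence~\eqref{a.local}, the factor $(D_F\a)(\mu,z,(x_i)_{i\in F})$ vanishes unless every $x_i$ with $i\in F$ lies in $z + B_{1/2}$, so integration against $\prod_{i \in F}\d x_i$ is effectively restricted to a set of Lebesgue measure $|B_{1/2}|^{|F|}$, independent of~$m$. Applying Cauchy--Schwarz first in the localized variables $(x_i)_{i\in F}$ and then in the remaining integrations, I would bound each right-hand-side summand by $C\sqrt{J_E}\sqrt{J_{E\setminus F}}$; since $|E\setminus F| < |E|$, the inductive hypothesis gives $J_{E\setminus F} \ls \rho_0|\cu_m|$, and Young's inequality then yields $J_E \ls \rho_0|\cu_m|$, proving~\eqref{corrector.1.a}.

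For~\eqref{corrector.1.b} with $|F|=1$ I would apply the same machinery with $v=\Phi$, which is admissible by the linearity of Remark~\ref{rmk:Combination}. Subtracting the identities at $E'=\{1\}$ and $E'=\emptyset$ gives $\int_{\cu_{m+1}}\E[\int_{\cu_m} \nabla\Phi \cdot D_{\{1\}}(\a^\# \nabla \psi_m^\#) \, \d\mu]\,\d x_1 = 0$. Expanding $D_{\{1\}}(\a\nabla\psi_m) = \a^{\{1\}}\nabla D_{\{1\}}\psi_m + (D_{\{1\}}\a)\nabla\psi_m$ and using that $\int_{\cu_{m+1}} \a^{\{1\}}\nabla D_{\{1\}}\psi_m \, \d x_1 = \a\nabla\Phi + \int_{\cu_{m+1}}(D_{\{1\}}\a)\nabla D_{\{1\}}\psi_m \, \d x_1$, uniform ellipticity places $\E[\int|\nabla\Phi|^2 \d\mu]$ on the left and two error terms localized by $D_{\{1\}}\a$ on the right; the same finite-range Cauchy--Schwarz bound, together with $J_{\{1\}}\ls \rho_0|\cu_m|$ already established, closes the estimate. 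The principal technical obstacle, I expect, is the combinatorial bookkeeping in iterating the Leibniz rule and tracking the $\#$-superscript convention correctly; once this is set up, uniform ellipticity, finite-range localization, and iterated Cauchy--Schwarz close the argument mechanically.
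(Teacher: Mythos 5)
Your proposal is correct and takes essentially the same approach as the paper: test the variational identity of Proposition~\ref{cor:var.formulation} (via Remark~\ref{rmk:Combination}) against $D_E\psi_m$ (resp.\ $\int_{(\R^d)^{F}} D_{F}\psi_m$), use the discrete Leibniz rule to isolate the coercive term $\nabla D_E\psi_m\cdot\a^E\nabla D_E\psi_m$, and close by uniform ellipticity, finite-range localization of $D_F\a$, and Cauchy--Schwarz with induction on $|E|$. The only presentational difference is that you invoke the general Leibniz identity~\eqref{eq:Leibniz1} uniformly in $|E|$---which anticipates the argument for Proposition~\ref{prop:KeyEstimate} in Section~\ref{s.higher-order}---whereas the paper writes the $|E|\le 2$ expansions explicitly in~\eqref{corrector.est.1} and~\eqref{corrector.est.2}, and for~\eqref{corrector.1.b} the paper introduces an auxiliary index $2$ and tests with $D_{\{2\}}\psi_m$ before relabelling, which is the same Fubini step you carry out implicitly by testing directly with $\Phi$.
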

\begin{remark}  %\label{}
Applying Lemma~\ref{l.corrector.1} for an underlying particle density of $\rho_0 + \rho$ instead of $\rho$, we see that the same estimates as in \eqref{corrector.1.a} and \eqref{corrector.1.b} hold if we replace $\psi_m$ by $\psi_m^\rho$ and $\mu$ by $\mu + \mu_\rho$. 
\end{remark}

Proposition~\ref{p.C11.m} and Lemma~\ref{l.corrector.1} will be generalized in Section \ref{s.higher-order} to prove Theorem~\ref{t.smooth}, where a higher-order approximation is needed. Estimate \eqref{corrector.1.a}, indeed, corresponds to Proposition \ref{prop:KeyEstimate} with $|F| \leq 2, G= \emptyset$, while \eqref{corrector.1.b} corresponds to $F = G$ with $|F| =1$.

We organize the remainder of this section as follows. We finish the introduction with a lemma gathering some basic properties of Poisson point processes. In Subsection~\ref{sub.C11.a}, we prove Lemmas \ref{l.increment} and \ref{l.corrector.1}. Then we devote Subsection \ref{sub.C11.b} to the proof of the key result Proposition \ref{p.C11.m}. Subsection \ref{sub.C11.c} builds upon Proposition \ref{p.C11.m} to conclude for the validity of Theorem \ref{t.C11}.

\smallskip

Here are some basic estimates for Poisson point processes that we extensively use in the arguments of this section.
\begin{lemma}\label{l.aux}
Let $\rho \in (0, +\infty)$. For every measurable $F: \mathcal{M}_\delta(\R^d) \to \R$ such that $\E_\rho \Ll[\, |F|\, \Rr] < +\infty$, $z \in \R^d$, and finite set $E \subset \N_+$, we have
\begin{equation}
\begin{aligned}\label{l.aux.indicator} 
\E_\rho \Ll[ F(\mu_\rho) \, \1_{\{\mu_\rho(\cu + z) = 1 \}} \Rr] &= \rho \int_{\cu +z} \E_\rho \Ll[  F^{\{1\}}(\mu_\rho) \, \1_{\{\mu_\rho(\cu +z) = 0\}}  \Rr] \, \d x_1,\\
|\E_\rho \Ll[ F(\mu_\rho) \ \1_{\{\mu_\rho(\cu +z ) \geq |E| \}} \Rr] | &\leq \rho^{|E|} \int_{(\cu +z)^{E}} \E_\rho \bigl[ \, |F^{E}(\mu_\rho)| \, \bigr].
\end{aligned}
\end{equation}

\smallskip

For every measurable function $H : \mathcal{M}_{\delta}(\R^d) \times \R^d \to \R$ satisfying the integrability condition
$\E_\rho\Ll[ \int_U |H(\mu_\rho, x)|\, \d\mu_\rho(x) \Rr]< +\infty$,
we have Mecke's identity (c.f also \cite[Chapter 1]{last2016stochastic})
\begin{align}\label{mecke}
\E_\rho \Ll [  \frac{1}{\rho \, |U|} \int_{U} H( \mu_\rho , x) \, \d \mu_\rho(x) \Rr] = \fint_{U}  \E_\rho \Ll [  H(\mu_\rho + \delta_x, x) \Rr] \, \d x.
\end{align}
\end{lemma}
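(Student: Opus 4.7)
All three identities are standard consequences of the Palm theory of the Poisson point process; my plan is to handle the first two by conditioning on the number of points of $\mu_\rho$ falling in the cube $\cu+z$, and to obtain the third from the classical Campbell--Mecke formula.

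For the first identity, I would use that, since $|\cu|=1$, the event $\{\mu_\rho(\cu+z)=1\}$ has probability $\rho e^{-\rho}$, and that conditionally on this event the unique point $X$ is uniformly distributed on $\cu+z$ and independent of the restriction $\mu_\rho\mres(\cu+z)^c$, which is itself a Poisson process of intensity $\rho$ on $(\cu+z)^c$. Decomposing the expectation on the left-hand side accordingly yields
\[
\E_\rho\bigl[F(\mu_\rho)\,\1_{\{\mu_\rho(\cu+z)=1\}}\bigr] = \rho\,e^{-\rho}\int_{\cu+z}\E\bigl[F(\delta_{x_1}+\mu_\rho\mres(\cu+z)^c)\bigr]\,\d x_1.
\]
On the other hand, on $\{\mu_\rho(\cu+z)=0\}$ we have $\mu_\rho=\mu_\rho\mres(\cu+z)^c$, and by definition \eqref{e.def.fE} of the superscript notation, $F^{\{1\}}(\mu_\rho, x_1)=F(\mu_\rho+\delta_{x_1})$; factoring the probability of this event then shows that the right-hand side of the identity equals the same expression.

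The second identity follows from a similar expansion: conditioning on $\mu_\rho(\cu+z)=n\geq|E|$ and using that, given this, the $n$ points are i.i.d.\ uniform on $\cu+z$ independently of $\mu_\rho\mres(\cu+z)^c$, the left-hand side is bounded by
\[
\sum_{n\geq|E|}\frac{e^{-\rho}\rho^n}{n!}\int_{(\cu+z)^n}\E\Bigl[\bigl|F\bigl(\textstyle\sum_{i=1}^n\delta_{x_i}+\mu_\rho\mres(\cu+z)^c\bigr)\bigr|\Bigr]\,\d x_1\cdots\d x_n.
\]
Expanding the right-hand side of the target bound analogously---splitting $\mu_\rho$ into its restrictions to $\cu+z$ and its complement, and integrating over the number $k$ of points in $\cu+z$---one obtains the same type of sum but with the weight $\rho^m/(m-|E|)!$ in place of $\rho^n/n!$, where $m=|E|+k$. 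Since $1/(m-|E|)!\geq 1/m!$ for $m\geq|E|$, a term-by-term comparison delivers the claimed inequality.

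Finally, for the third identity I would invoke the Campbell--Mecke formula for $\mu_\rho$: under the integrability hypothesis $\E_\rho[\int_U|H(\mu_\rho,x)|\,\d\mu_\rho(x)]<\infty$,
\[
\E_\rho\Bigl[\int_U H(\mu_\rho,x)\,\d\mu_\rho(x)\Bigr] = \rho\int_U\E_\rho\bigl[H(\mu_\rho+\delta_x,x)\bigr]\,\d x,
\]
and then divide through by $\rho\,|U|$. None of these steps should present a genuine obstacle; the only care required is to keep the notation $F^E$ from \eqref{e.def.fE} and the compact integration symbol from \eqref{eq:SymbolInte} aligned with the Poisson combinatorics when organising the sums in the second claim.
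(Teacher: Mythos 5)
Your proposal is correct and follows essentially the same route as the paper's proof: for the first two identities you condition on (or expand over) the number of $\mu_\rho$-points in $\cu+z$, use the independence and uniform-placement structure of the Poisson process, and for the second one conclude by the term-by-term comparison $1/n! \le 1/(n-|E|)!$ for $n \ge |E|$, exactly as the paper does; for Mecke's identity both you and the paper simply invoke the standard Campbell--Mecke argument.
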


\begin{proof}[Proof of Lemma \ref{l.aux}]
Without loss of generality, we may fix $z =0$ in \eqref{l.aux.indicator}. The first identity there follows immediately if we spell out the definition of the expectation $\E_\rho$ and use the independence of increments of the Poisson point process:
\begin{align*}
\E_\rho \Ll[ F(\mu_\rho) \, \1_{\{\mu_\rho(\cu) = 1\}} \Rr]& =   \E_\rho \bigl[ e^{-\rho}\rho \int_{\cu} F( \delta_{x_1} +  \mu_\rho \mres (\cu)^c)  \, \d x_1 \bigr] \\
& = \rho \int_{\cu} \E_\rho \bigl[ \1_{\{\mu_\rho (\cu) = 0\}}F( \delta_{x_1} +  {\mu_\rho})  \bigr]  \, \d x_1.
\end{align*}
For the second estimate in \eqref{l.aux.indicator}, we write $n := |E|$ and observe that
\begin{align*}
\E_\rho \Ll[ F(\mu_\rho) \, \1_{\mu_\rho(\cu) \geq |E|} \Rr]  = \E_\rho \Ll[ e^{-\rho}  \sum_{k=n}^\infty \frac{\rho^k}{k!} \int_{(\cu)^k} F( \sum_{i=1}^k \delta_{x_i} + \mu_\rho\mres(\cu)^c) \, \d x_1 \cdots \, \d x_k \Rr].
\end{align*}
This allows us to bound
 \begin{align*}
|& \E_\rho \Ll[ F(\mu_\rho) \, \1_{\{\mu_\rho(\cu) \geq |E|\}} \Rr]|\leq \rho^n  \, \times\\
& \int_{\cu^n}  \E_\rho \Ll[   \sum_{k=n}^\infty e^{-\rho}\frac{\rho^{k-n}}{(k-n)!} \int_{(\cu)^{k-n}} |F( \sum_{j=1}^n \delta_{y_j} +   \sum_{i=1}^{k-n} \delta_{x_i}  + \mu_\rho\mres \cu^c)| \, \d x_1 \cdots \, \d x_{k-n} \Rr] \d y_1 \cdots \, \d y_{n}.
\end{align*}
which is the second estimate in \eqref{l.aux.indicator}.
Finally, \eqref{mecke} may be obtained from the definition of $\E_\rho$ and the invariance of $H$ under permutations of the atoms in $\mu$.
\end{proof}

\subsection{Representation formula and corrector estimates }\label{sub.C11.a} In this subsection we prove Lemmas \ref{l.increment} and \ref{l.corrector.1}.

\begin{proof}[Proof of Lemma \ref{l.increment}]
We use the definition of $\Delta_m^\rho(\rho_0)$ and \eqref{e.energy.identities} to write
\begin{align*}
\Delta^\rho_m(\rho_0) = \E \Ll[ \frac{1}{(\rho_0 + \rho) \vert \cu_m \vert} \int_{\cu_m}   q \cdot \nabla \psi^\rho_m  \, \d (\mu+\mu_\rho)\Rr]  - \E \Ll[ \frac{1}{\rho_0  \vert \cu_m \vert} \int_{\cu_m}   q \cdot \nabla \psi_m \, \d \mu\Rr].
\end{align*}
Identity \eqref{mecke} of Lemma \ref{l.aux} applied to $\nabla \psi_m^\rho$, first with density $\rho$ (with respect to to $\mu_\rho$) and then $\rho_0$ (with respect to $\mu$), yields that
\begin{align*}
 \E \Ll[ \frac{1}{(\rho_0 + \rho) \vert \cu_m \vert} \int_{\cu_m}  q \cdot \nabla \psi^\rho_m  \, \d \mu_\rho\Rr] =  \E \Ll[ \frac{\rho}{\rho_0(\rho_0 + \rho) \vert \cu_m \vert} \int_{\cu_m}  q \cdot \nabla \psi^\rho_m  \, \d \mu\Rr],
\end{align*}
and, hence, also
\begin{align}\label{decomp.delta}
\Delta^\rho_m(\rho_0) = \E \Ll[ \frac{1}{\rho_0 \vert \cu_m \vert} \int_{\cu_m}   q \cdot (\nabla \psi^\rho_m- \nabla \psi_m) \, \d \mu\Rr].
\end{align}
To establish representation \eqref{incr.rep.1} it now only remains {to apply \eqref{eq:Harmonic2} in Proposition~\ref{cor:var.formulation} and \eqref{eq:Harmonic1} with the choice  $E = F = \emptyset$}. 
%\bigskip
%We now turn to \eqref{delta.lipschitz}: By \eqref{incr.rep.1},  \eqref{a.local} and the assumptions on the measure $\P$ we have that
%\begin{align*}
%\Delta^\rho_m(\rho_0) &= \E \Ll[ \frac{1}{\rho_0  \vert \cu_m \vert} \int_{\cu_m} \nabla \psi_m  \cdot \E_{\rho} \Ll[ \1_{\{\mu_\rho(\cu + z) \geq 1\}} (\a - \a^\rho) \nabla \psi^\rho_m \Rr]  \, \d \mu(z) \Rr].
%\end{align*}
%Using \eqref{l.aux.indicator} of Lemma \ref{l.aux} for the inner expectation and appealing to Cauchy--Schwarz inequality we bound
%\begin{align*}
%|\Delta^\rho_m(\rho_0)| &\lesssim \rho \E \Ll[ \frac{1}{\rho_0  \vert \cu_m \vert} \int_{\cu_m} |\nabla \psi_m|^2 \, \d \mu(z) \Rr]^{\frac 1 2}
% \E \Ll[ \frac{1}{\rho_0  \vert \cu_m \vert} \int_{\cu_m}\int_{\cu + z} |\nabla \psi^{\rho,\{1\}}_m|^2 \, \d x_1 \, \d \mu(z) \Rr]^{\frac 1 2}\\
% & \stackrel{\eqref{energy.basic}}{\lesssim} \rho \E \Ll[ \frac{1}{\rho_0  \vert \cu_m \vert} \int_{\cu_m}\int_{\cu + z} |\nabla \psi^{\rho,\{1\}}_m|^2 \, \d x_1 \, \d \mu(z) \Rr]^{\frac 1 2}
%\end{align*}
%The expectation above is bounded thanks to the triangle inequality and a combination of \eqref{energy.basic} and Lemma \ref{l.corrector.1}. This establishes \eqref{delta.lipschitz} and concludes the proof of Lemma \ref{l.increment}.
\end{proof}

\begin{proof}[Proof of Lemma \ref{l.corrector.1}]
We start by noting that if $E = F= \emptyset$, then the inequalities of Lemma \ref{l.corrector.1} correspond to the basic energy estimate in \eqref{energy.basic}. Hence, we only need to focus on the cases $E, F \neq \emptyset$. With no loss of generality, we prove \eqref{corrector.1.a} with  $E= \{1\}$ or $E=\{1,2 \}$ and  \eqref{corrector.1.b} with $F= \{1\}$.
We also stress that, since by construction the maximizer $\psi_m$ is $\mathcal{F}_{Q_{3^m+1}}$-measurable, for every non-empty subset $G \subset \N$ we have that $D_G \psi_m$ vanishes whenever one of the particles $\{ x_j \}_{j\in G}$ does not belong to $Q_{3^m+1}$ (c.f. Definitions \eqref{e.def.DE} and \eqref{e.def.diff}). This implies that in  \eqref{corrector.1.a}-\eqref{corrector.1.b} we may replace the integrals over $\R^d$ by integrals over any set $U \supseteq Q_{3^m+1}$. In line with the notation of Section \ref{s.prelim}, throughout the proof we fix $U = \cu_{m+1}$.

\smallskip

We start with \eqref{corrector.1.a} when $E= \{1\}$. In view of the previous remarks and spelling out the integrand, this may be rewritten as 
\begin{equation}\label{corrector.1.a.2}
\begin{aligned}
&\int_{\cu_{m+1}}\E\Ll[\frac{1}{\rho_0\vert \cu_m \vert} \int_{\cu_m} |\nabla (\psi_m^{\{ 1\}} - \psi_m)|^2 \d \mu  \Rr] \d x_1\lesssim 1,\\
%&\fint_{(\cu_{m+1})^2}\E \Ll[ \frac{1}{\rho_0\vert \cu_m \vert}\int_{\cu_m} |\nabla (\psi_m^{\{1, 2\}} - \psi_m^{\{ 1\}} - \psi^{\{2\}}_m + \psi_m)|^2 \d (\mu +\delta_{x_1} + \delta_{x_2}) \Rr] \, \d x_1 \, \d x_2 \lesssim |\cu_m|^{-2}.
\end{aligned}
\end{equation}
%and
%\begin{equation}\label{corrector.1.b.2}
%\begin{aligned}
%\E \Ll[\frac{1}{\rho_0\vert \cu_m \vert} \int_{\cu_{m}} |\fint_{\cu_{m}} \nabla (\psi_m^{\{ 1\}} - \psi_m) \, \d x_1|^2  \d\mu \Rr] \lesssim |\cu_m|^{-2}.
% \end{aligned}
%\end{equation}
We consider identity \eqref{eq:Harmonic1} of Proposition \ref{cor:var.formulation}  with $\rho = 0$, $E = \emptyset $ and $E = \{1\}$ and with test function $D_{\{1\}}\psi_m$ (c.f.\ Remark \ref{rmk:Combination}). 
\begin{align*}
\int_{\cu_{m+1}}\E\Ll[\int_{\cu_m} (\nabla D_{\{1\}}\psi_m  \cdot \a \nabla \psi_m -  \nabla D_{\{1\}}\psi_m \cdot q) \, \d \mu  \Rr] = 0,\\
\int_{\cu_{m+1}}\E\Ll[\int_{\cu_m} (\nabla D_{\{1\}}\psi_m  \cdot \a^{\{1\}} \nabla \psi_m^{\{1\}} -  \nabla D_{\{1\}}\psi_m \cdot q) \, \d \mu  \Rr] = 0.
\end{align*}
Subtracting the resulting identities yields that
\begin{equation}\label{corrector.est.1}
\begin{aligned}
\int_{\cu_{m+1}}\E \bigl[\frac{1}{\rho_0  \vert \cu_m \vert} \int_{\cu_m} \nabla D_{\{1\}}\psi_m \cdot \a^{\{1\}}& \nabla D_{\{1\}}\psi_m \, \d\mu \bigr] \, \d x_1 \\
 = -\int_{\cu_{m+1}}\E\bigl[\frac{1}{\rho_0  \vert \cu_m \vert}& \int_{\cu_m}\nabla D_{\{1\}}\psi_m \cdot (D_{\{1\}}\a) \, \nabla \psi_m \, \d \mu \bigr] \, \d x_1.
\end{aligned}
\end{equation}
We now appeal to the uniform ellipticity assumption \eqref{a.elliptic} and the Cauchy--Schwarz inequality to infer from \eqref{corrector.est.1} that
\begin{equation*}
\begin{aligned}
 \int_{\cu_{m+1}}\E\bigl[\frac{1}{\rho_0  \vert \cu_m \vert} \int_{\cu_m} &|\nabla D_{\{1\}}\psi_m|^2 \, \d \mu \bigr] \, \d x_1\\
& {\lesssim} \int_{\cu_{m+1}}\E\bigl[\frac{1}{\rho_0  \vert \cu_m \vert} \int_{\cu_m} (D_{\{1\}}\a)^2 |\nabla \psi_m|^2 \d \mu \bigr] \, \d x_1.
\end{aligned}
\end{equation*}
We obtain the first inequality in \eqref{corrector.1.a.2} after noting that \eqref{a.local} and \eqref{a.elliptic} for $\a$ imply that also
\begin{align*}
\int_{\cu_{m+1}}\E&\bigl[\frac{1}{\rho_0  \vert \cu_m \vert} \int_{\cu_m} (D_{\{1\}}\a)^2 |\nabla \psi_m|^2 \d \mu \bigr] \, \d x_1\\
&\lesssim \E\bigl[\frac{1}{\rho_0  \vert \cu_m \vert} \int_{\cu_m} \int_{\cu +z} |\nabla \psi_m(\mu, z)|^2 \, \d x_1 \, \d \mu(z) \bigr] \stackrel{\eqref{energy.basic}}{\lesssim}1.
\end{align*}
This establishes \eqref{corrector.1.a.2}. 

\smallskip

The proof of \eqref{corrector.1.a} when $E= \{ 1, 2\}$ follows a similar argument. Observe that 
\begin{align*}
D_{\{1,2\}}\psi_m = \psi_m^{\{1,2\}} - \psi_m^{\{1\}} - \psi_m^{\{2\}} + \psi_m,    
\end{align*}
we may add and subtract suitable combinations of identity~\eqref{eq:Harmonic1} in Proposition \ref{cor:var.formulation} with $E\in \{ \emptyset, \{1\}, \{2\}, \{1,2\} \}$ and test function $\nabla D_{\{1,2\}}\psi_m$ (c.f. Remark \ref{rmk:Combination}) to infer that 
\begin{equation}\label{corrector.est.2}
\begin{aligned}
&\int_{(\cu_{m+1})^2}\hspace{-0.1cm}\E\biggl[\frac{1}{\rho_0  \vert \cu_m \vert} \int_{\cu_m} \nabla D_{\{1,2\}}\psi_m \cdot \a^{\{1, 2\}} \nabla D_{\{1,2\}} \psi_m \, \d \mu  \biggr] \, \d x_1 \, \d x_2 \\
& = \sum_{i=1}^2 \int_{(\cu_{m+1})^2}\E\biggl[\frac{1}{\rho_0  \vert \cu_m \vert} \int_{\cu_m}\nabla D_{\{1,2\}}\psi_m \cdot (\a^{\{i\}}- \a^{\{1, 2\}}) \nabla D_{\{i\}} \psi_m  \d \mu \biggr] \d x_1 \, \d x_2\\
&\quad\quad\quad \quad-\int_{(\cu_{m+1})^2} \E \biggl[\frac{1}{\rho_0  \vert \cu_m \vert} \int_{\cu_m}\nabla D_{\{1,2\}}\psi_m \cdot (D_{\{1,2\}}\a) \, \nabla \psi_m  \d\mu \biggr] \d x_1 \, \d x_2.
\end{aligned} 
\end{equation}
From this, we obtain \eqref{corrector.1.a} when $E= \{ 1, 2\}$ as was done in the case $E=\{1\}$. This time, besides the Cauchy--Schwarz inequality and \eqref{a.elliptic}-\eqref{a.local}, we also rely on inequality~\eqref{corrector.1.a.2} for $|E|=1$ that was proved above.

\smallskip

To conclude the proof of this lemma, it remains to establish inequality \eqref{corrector.1.b}. As argued at the beginning of the proof of this lemma, this can be reduced to
\begin{equation}\label{corrector.1.b.2}
\begin{aligned}
\E \Ll[\frac{1}{\rho_0\vert \cu_m \vert} \int_{\cu_{m}} |\int_{(\cu_{m+1})^{\{1\}}} \nabla D_{\{1\}}\psi_m \, |^2  \d\mu \Rr] \lesssim 1.
\end{aligned}
\end{equation}
We appeal again to Proposition \ref{cor:var.formulation}: we subtract \eqref{eq:Harmonic1} with $E=\emptyset$ and test function $D_{\{2\}}\psi_m$ (c.f. Remark \ref{rmk:Combination}) from the same identity with $E= \{1\}$ and test function $D_{\{2\}}\psi_m$. This yields
\begin{equation*}
\begin{aligned}
\int_{(\cu_{m+1})^{\{1,2\}}}\E \bigl[&\frac{1}{\rho_0  \vert \cu_m \vert} \int_{\cu_m}  \nabla D_{\{2\}}\psi_m \cdot \a \,  \nabla D_{\{1\}}\psi_m \, \d \mu \bigr] \\
& = -\int_{(\cu_{m+1})^{\{1,2\}}} \E\bigl[\frac{1}{\rho_0  \vert \cu_m \vert} \int_{\cu_m} \nabla D_{\{2\}}\psi_m \cdot (D_{\{1\}}\a) \, \nabla \psi_m^{\{ 1\}} \d \mu \bigr].
\end{aligned}
\end{equation*}
Appealing to Fubini's theorem and observing that, by a simple relabelling of the integration variable, it holds that $\int_{(\cu_{m+1})^{\{1\}}} D_{\{1\}}\psi_m = \int_{(\cu_{m+1})^{\{2\}}} D_{\{2\}}\psi_m$, we infer that
\begin{equation*}
\begin{aligned}
\E \bigl[\frac{1}{\rho_0  \vert \cu_m \vert}& \int_{\cu_m} \nabla\bigl( \int_{(\cu_{m+1})^{\{1\}}}D_{\{1\}}\psi_m\bigr) \cdot \a \, \nabla\bigl( \int_{(\cu_{m+1})^{\{1\}}}D_{\{1\}}\psi_m\bigr) \, \d \mu \bigr]\\
& = -\E\bigl[\frac{1}{\rho_0  \vert \cu_m \vert} \int_{\cu_m} \nabla\bigl( \int_{(\cu_{m+1})^{\{1\}}}D_{\{1\}}\psi_m\bigr) \cdot \bigl(\int_{/\cu_{m+1})^{\{1\}}} (D_{\{1\}}\a) \, \nabla \psi_m^{\{ 1\}}\bigr) \, \d \mu \bigr].
\end{aligned}
\end{equation*}
By \eqref{a.elliptic} and the Cauchy--Schwarz inequality, this also implies that 
\begin{align*}
\E\bigl[\frac{1}{\rho_0  \vert \cu_m \vert}& \int_{\cu_m}  |\int_{(\cu_{m+1})^{\{1\}}}\nabla D_{\{1\}}\psi_m  \,  |^2  \, \d \mu \bigr]\\
&\leq \E \bigl[\frac{1}{\rho_0  \vert \cu_m \vert} \int_{\cu_m} |\int_{(\cu_{m+1})^{\{1\}}} (D_{\{1\}}\a) \nabla \psi_m^{\{ 1\}} \, |^2 \d \mu \bigr].
\end{align*}
We thus conclude the proof of \eqref{corrector.1.b.2} provided that the term on the right-hand side above is $\lesssim 1$: by the triangle inequality we have that
\begin{align*}
\E \bigl[\frac{1}{\rho_0  \vert \cu_m \vert}& \int_{\cu_m} |\int_{(\cu_{m+1})^{\{1\}}} (D_{\{1\}}\a) \nabla \psi_m^{\{ 1\}} \,  |^2 \d \mu \bigr]\\
&\leq  \E \bigl[\frac{1}{\rho_0  \vert \cu_m \vert} \int_{\cu_m} |\int_{(\cu_{m+1})^{\{1\}}} (D_{\{1\}}\a) \nabla D_{\{1\}} \psi_m \, |^2 \d \mu \bigr] \\
& \hspace{3cm}+ \E \bigl[\frac{1}{\rho_0  \vert \cu_m \vert} \int_{\cu_m} |\int_{(\cu_{m+1})^{\{1\}}} (\a- \a^{\{1\}}) \, |^2 |\nabla \psi_m|^2 \d \mu \bigr].
\end{align*}
The second term on the right-hand side is immediately bounded by $\lesssim 1$ due to assumptions \eqref{a.elliptic}-\eqref{a.local} on $\a$ and \eqref{energy.basic}. The first term admits the same upper bound thanks to the Cauchy--Schwarz inequality, \eqref{a.elliptic}-\eqref{a.local}, and  \eqref{corrector.1.a.2}. The proof of Lemma \ref{l.corrector.1} is complete.
\end{proof}

\subsection{Proof of Proposition \ref{p.C11.m}}\label{sub.C11.b}
In this section we use Lemmas \ref{l.increment} and \ref{l.corrector.1} to show Proposition \ref{p.C11.m}.

\begin{proof}[Proof of Proposition \ref{p.C11.m}]
Limit \eqref{limit.increments} follows immediately from definitions \eqref{eq:defPer}, \eqref{eq:defPerApp} and \eqref{eq:defab}. We thus turn to \eqref{expansion.11.m} and prove this inequality in three different steps. 

\smallskip

\textit{Step 1.} We claim that 
\begin{equation}\label{11.m.step1}
\begin{aligned}
\bigl|& \Delta_m^\rho(\rho_0) - \rho c_{1,m}(\rho_0)\bigr| \lesssim   \rho^2\\
& + \rho \Ll| \int_{\cu_{m+1}}\E \Ll[ \frac{1}{\rho_0  \vert \cu_m \vert} \int_{\cu_m} \nabla \psi_m \cdot \bigl(D_{\{1\}}\a \bigr) (\nabla \psi_m^{\rho, \{1\}}- \nabla \psi_m^{\{ 1\}}) \d \mu \Rr] \, \d x_1 \Rr|.
\end{aligned}
\end{equation}

We begin by using the representation formula for $\Delta_m^\rho(\rho_0)$ of Lemma \ref{l.increment}, the definition of the expectation $\E$ and assumption \eqref{a.local} for $\a$ to rewrite
\begin{equation}
\begin{aligned}\label{11.m.step1.a}
 \Delta_m^\rho(\rho_0) &=  \E \Ll[ \frac{1}{\rho_0  \vert \cu_m \vert} \int_{\cu_m}\nabla \psi_m \cdot \E_\rho \Ll[ \1_{\mu_\rho(\cu + z) \geq 1}(\a - \a^\rho) \nabla \psi_m^\rho \Rr] \d \mu(z) \Rr]\\
 & = \E \Ll[ \frac{1}{\rho_0  \vert \cu_m \vert} \int_{\cu_m}\nabla \psi_m \cdot \E_\rho \Ll[ \1_{\mu_\rho(\cu + z) =1}(\a - \a^\rho) \nabla \psi_m^\rho \Rr] \d \mu(z) \Rr]\\
 & \quad \quad + \E \Ll[ \frac{1}{\rho_0  \vert \cu_m \vert} \int_{\cu_m}\nabla \psi_m \cdot \E_\rho \Ll[ \1_{\mu_\rho(\cu + z) \geq 2}(\a - \a^\rho) \nabla \psi_m^\rho \Rr] \d \mu(z) \Rr].
 \end{aligned}
 \end{equation}
We claim that the second term on the right-hand side above is bounded by a constant multiple of $\rho^2$: using the Cauchy--Schwarz inequality, the bound \eqref{energy.basic}, and the second inequality in~\eqref{l.aux.indicator} of Lemma~\ref{l.aux} with $E= \{1, 2\}$, we infer that
\begin{equation}\label{more.particles.estimate}
\begin{aligned}
\biggl|\E \biggl[ \frac{1}{\rho_0  \vert \cu_m \vert} \int_{\cu_m}\nabla \psi_m& \cdot \E_\rho \Ll[ \1_{\mu_\rho(\cu + z) \geq 2}(\a - \a^\rho) \nabla \psi_m^\rho \Rr] \d \mu \biggr]\biggr|\\
&\lesssim \rho^2 \E \Ll[ \frac{1}{\rho_0 \vert \cu_m \vert} \int_{\cu_m} \bigl(\int_{(\cu +z)^{2}} |\nabla \psi_m^{\rho, \{1, 2\}}|^2 \, \d x_1 \, \d x_2\bigr) \d \mu(z) \Rr],
\end{aligned}
\end{equation}
 and the right-hand side is $\lesssim \rho^2$, as one can see by writing
 \begin{align*}
 \psi_m^{\rho,\{1,2\}} = D_{\{1,2\}}\psi_m^{\rho} - D_{\{1\}}\psi_m^{\rho} - D_{\{2\}}\psi_m^{\rho} + \psi_m^{\rho},    
 \end{align*}
 and then applying the triangle inequality and Lemma \ref{l.corrector.1}. Inserting this into \eqref{11.m.step1.a}, we have that
 \begin{equation}
\begin{aligned}\label{11.m.step1.b}
\Ll| \Delta_m^\rho(\rho_0) - \E \Ll[ \frac{1}{\rho_0  \vert \cu_m \vert} \int_{\cu_m}\nabla \psi_m \cdot \E_\rho \Ll[ \1_{\mu_\rho(\cu + z) =1}(\a - \a^\rho) \nabla \psi_m^\rho \Rr] \d \mu(z) \Rr] \Rr|  \lesssim \rho^2.
  \end{aligned}
 \end{equation}
 We now apply the first inequality of \eqref{l.aux.indicator} to the inner expectation in the term on the left-hand side above. This, together with the locality \eqref{a.local} of $\a$, yields that
 \begin{align*}
 &\E \Ll[ \frac{1}{\rho_0  \vert \cu_m \vert} \int_{\cu_m}\nabla \psi_m \cdot \E_\rho \Ll[ \1_{\mu_\rho(\cu + z) =1}(\a - \a^\rho) \nabla \psi_m^\rho \Rr] \d \mu(z) \Rr]\\
 & = \rho \E \Ll[ \frac{1}{\rho_0  \vert \cu_m \vert} \int_{\cu_m}\nabla \psi_m \cdot \bigl(\int_{\cu + z} \E_\rho \Ll[ \1_{\mu_\rho(\cu + z) =0}(\a - \a^{\{1\}}) \nabla \psi_m^{\rho,1} \Rr] \, \d x_1 \bigr)\d \mu(z) \Rr]\\
 & \stackrel{\eqref{c_1.m}}{=}  \rho\, c_{m,1} - \rho \, \E \Ll[ \frac{1}{\rho_0  \vert \cu_m \vert} \int_{\cu_m}\nabla \psi_m \cdot \bigl(\int_{\cu + z} (D_{\{1\}}\a) (\nabla \psi_m^{\rho,\{1\}} - \nabla \psi_m^{\{1\}}) \, \d x_1 \bigr)\d \mu(z) \Rr]\\
 &+  \rho \, \E \Ll[ \frac{1}{\rho_0  \vert \cu_m \vert} \int_{\cu_m}\nabla \psi_m \cdot \bigl(\int_{\cu + z} \E_\rho \Ll[ \1_{\mu_\rho(\cu + z) \geq 1}(D_{\{1\}}\a) \nabla \psi_m^{\rho,\{1\}} \Rr] \, \d x_1 \bigr)\d \mu(z) \Rr].
 \end{align*}
 To conclude from this and \eqref{11.m.step1.b} that inequality \eqref{11.m.step1} holds, it remains to prove that the last term above is $\lesssim \rho^2$. This may be done using again the second inequality in \eqref{l.aux.indicator} and Lemma \ref{l.corrector.1}, as done for the term in \eqref{more.particles.estimate}. 
%The proof of Step 1 is complete.
 
  \medskip

\textit{Step 2.} We now argue that the term appearing on the right-hand side of~\eqref{11.m.step1} may be rewritten as follows: 
\begin{equation}
\begin{aligned}\label{11.m.step2}
&\int_{\cu_{m+1}} \E \biggl[ \frac{1}{\rho_0  \vert \cu_m \vert}  \nabla \psi_m \cdot (D_{\{1\}}\a) (\nabla \psi_m^{\rho, \{1\}} - \nabla \psi_m^{\{ 1\}})  \d \mu \biggr] \, d x_1 \\
 & \quad =  \int_{\cu_{m+1}}  \E \biggl[\frac{1}{\rho_0  \vert \cu_m \vert}\int_{\cu_m} \nabla D_{\{1\}}\psi_m  \cdot (\a^{\rho, \{1\}} - \a^{\{1\}})\nabla\psi_m^{\rho, \{1\}}\, \d \mu \biggr] \, \d x_1.
\end{aligned}
\end{equation}
We appeal to Proposition~\ref{cor:var.formulation}: we subtract \eqref{eq:Harmonic1} with $E= F= \{1 \}$ from \eqref{eq:Harmonic1} with $E = \emptyset$ and $F= \{1 \}$. This, together with the symmetry of $\a$, yields that
\begin{align*}
-\int_{\cu_{m+1}}\E \bigl[\frac{1}{\rho_0  \vert \cu_m \vert}& \int_{\cu_m}   \nabla \psi_m  \cdot (D_{\{1\}}\a) \nabla \psi_m^{\rho,\{1\}} \, \d \mu  \bigr] \d x_1 \\
&= \int_{\cu_{m+1}} \E \bigl[\frac{1}{\rho_0  \vert \cu_m \vert} \int_{\cu_m}  \nabla \psi_m^{\rho,\{1\}}  \cdot \a^{\{1\}} \nabla D_{\{1\}}\psi_m \, \d \mu   \bigr] \d x_1.\\
\end{align*} 
We now subtract this inequality from the same one with $\rho=0$ (see also the discussion in Remark \ref{rmk:Combination}) and conclude that
\begin{multline}\label{eq:11.mm.step2.A}
\int_{\cu_{m+1}} \E\bigl[\frac{1}{\rho_0  \vert \cu_m \vert} \int_{\cu_m}  \nabla \psi_m\cdot (D_{\{1\}}\a)(\nabla\psi_m^{\rho, \{1\}} - \nabla \psi_m^{\{ 1\}})  \, \d \mu  \bigr] \, \d x_1 \\
= \int_{\cu_{m+1}}  \E \bigl[\frac{1}{\rho_0  \vert \cu_m \vert} \int_{\cu_m} (\nabla \psi_m^{\{ 1\}} - \nabla\psi_m^{\rho, \{1\}})  \cdot \a^{\{1\}} \nabla D_{\{1\}}\psi_m \, \d \mu   \bigr]  \, \d x_1.
\end{multline}
We now treat the term on the right-hand side above in an analogous way. We consider \eqref{eq:Harmonic1} and \eqref{eq:Harmonic2} in Proposition \ref{cor:var.formulation} with $E=\{1\}$ and test function $D_{\{1\}}\psi_m$ (this is possible by Remark \ref{rmk:Combination}). This yields
\begin{multline}\label{eq:11.mm.step2.B}
 \int_{\cu_{m+1}}\E \bigl[\frac{1}{\rho_0  \vert \cu_m \vert} \int_{\cu_m}  \nabla D_{\{1\}}\psi_m \cdot \a^{\{1\}} (\nabla \psi_m^{\{ 1\}} -  \nabla\psi_m^{\rho, \{1\}}) \, \d \mu   \bigr] \d x_1 \\
 = \int_{\cu_{m+1}}\E \bigl[\frac{1}{\rho_0  \vert \cu_m \vert}\int_{\cu_m} \nabla D_{\{1\}} \psi_m  \cdot (\a^{\rho, \{1\}} - \a^{\{1\}})\nabla\psi_m^{\rho, \{1\}} \, \d \mu   \bigr]  \d x_1.
 \end{multline}
We compare the two displays \eqref{eq:11.mm.step2.A} and \eqref{eq:11.mm.step2.B}, which give \eqref{11.m.step2} and thus conclude the proof of Step~2.

\medskip

\textit{Step 3.} In this step, we give an estimate that
\begin{equation}
\begin{aligned}\label{11.m.step3}
\biggl| \int_{\cu_{m+1}} \E\biggl[ \frac{1}{\rho_0  \vert \cu_m \vert}  \int_{\cu + z}\nabla \psi_m \cdot (D_{\{1\}}\a)(\nabla \psi_m^{\rho, \{1\}} - \nabla \psi_m^{\{ 1\}})   \, &\d \mu \biggr] \, \d x_1 \biggr| \lesssim \rho.
\end{aligned}
\end{equation}
This, together with the result \eqref{11.m.step1} of Step 1, will establish Proposition \ref{p.C11.m}.

\smallskip

Appealing to Step 2, the proof of this step can be reduced to establishing that
\begin{equation}\label{11.m.step3.a}
\begin{aligned}
\biggl| \int_{\cu_{m+1}}  \E \bigl[\frac{1}{\rho_0  \vert \cu_m \vert}\int_{\cu_m} \nabla D_{\{1\}}\psi_m \cdot (\a^{\rho, \{1\}} - \a^{\{1\}})  \nabla\psi_m^{\rho, \{1\}}  \, \d \mu  \bigr] \, \d x_1\biggr| \lesssim \rho.
\end{aligned}
\end{equation}
We use the triangle inequality to split
\begin{equation}\label{11.m.step3.b}
\begin{aligned}
 &\biggl| \int_{\cu_{m+1}} \E \biggl[ \frac{1}{\rho_0  \vert \cu_m \vert} \int_{\cu_m} \nabla D_{\{1\}}\psi_m \cdot \bigl(\a^{\{1\}} - \a^{\rho, \{1\}}\bigr)\nabla \psi_m^{\rho, \{1\}} \d \mu  \biggr] \, \d x_1 \biggr|\\
 &\leq  \biggl| \int_{\cu_{m+1}}  \E \biggl[ \frac{1}{\rho_0  \vert \cu_m \vert} \int_{\cu_m} \nabla D_{\{1\}}\psi_m \cdot \bigl(\a^{\{1\}} - \a^{\rho, \{1\}}\bigr)\nabla D_{\{1\}}\psi_m^\rho  \d \mu  \biggr]\, \d x_1\biggr|\\
 & \quad\quad\quad \quad  +  \biggl|  \int_{\cu_{m+1}} \E \biggl[ \frac{1}{\rho_0  \vert \cu_m \vert} \int_{\cu_m} \nabla D_{\{1\}}\psi_m \cdot \bigl(\a^{\{1\}} - \a^{\rho, \{1\}}\bigr)\nabla \psi_m^{\rho}  \d \mu \biggr]\, \d x_1\biggr|,
\end{aligned}
\end{equation}
and treat separately the two integrals above.

\smallskip

We begin with the first one and argue similarly to \eqref{11.m.step1.a} of Step 1: we use \eqref{a.local},  Lemma \ref{l.aux.indicator} and the Cauchy--Schwarz inequality to control 
\begin{align*}
  \biggl| \int_{\cu_{m+1}} & \E \biggl[ \frac{1}{\rho_0  \vert \cu_m \vert} \int_{\cu_m} \nabla D_{\{1\}}\psi_m \cdot \bigl(\a^{\{1\}} - \a^{\rho, \{1\}}\bigr)\nabla D_{\{1\}}\psi_m^\rho  \d \mu  \biggr]\, \d x_1\biggr|\\
 &\leq  \rho \biggl(\int_{\cu_{m+1}}  \E \biggl[ \frac{1}{\rho_0  \vert \cu_m \vert} \int_{\cu_m} |\nabla D_{\{1\}}\psi_m|^2  \d \mu  \biggr] \, \d x_1\biggr)^{\frac 1 2}\\
 &\quad\quad\times \biggl( \int_{\cu_{m+1}} \E \biggl[ \frac{1}{\rho_0  \vert \cu_m \vert} \int_{\cu_m}\int_{\cu + z} |\nabla D_{\{1\}}\psi_m^{\rho,2} |^2   \d x_2   \d \mu (z) \biggr] \, \d x_1 \biggr)^{\frac 1 2}\\
 &\stackrel{\text{Lemma \ref{l.corrector.1}}}{\lesssim} \rho \,  \biggl( \int_{\cu_{m+1}} \E \biggl[ \frac{1}{\rho_0  \vert \cu_m \vert} \int_{\cu_m}\int_{\cu + z} |\nabla D_{\{1\}}\psi_m^{\rho,2} |^2   \d x_2   \d \mu(z) \biggr] \, \d x_1 \biggr)^{\frac 1 2}.
\end{align*} 
Writing
\begin{align*} 
D_{\{1\}}\psi_m^{\rho,2}& = \psi_m^{\rho, \{1, 2\}}-\psi_m^{\rho, \{2\}}  = D_{\{1,2\}} \psi_m^\rho  - D_{\{1\}}\psi_m^\rho,
\end{align*}
 and appealing again to the triangle inequality and to the estimates of Lemma \ref{l.corrector.1}, we infer that
\begin{align}\label{step3.term.a}
& \biggl| \int_{\cu_{m+1}}  \E \biggl[ \frac{1}{\rho_0  \vert \cu_m \vert} \int_{\cu_m} \nabla D_{\{1\}} \psi_m \cdot \bigl(\a^{\{1\}} - \a^{\rho, \{1\}}\bigr) \nabla D_{\{1\}} \psi_m^\rho \, \d \mu \biggr]\, \d x_1\biggr| \lesssim \rho.
\end{align}

\smallskip

The second integral in \eqref{11.m.step3.b} may be treated in a similar way, if we split
\begin{align*}
&  \biggl|  \int_{\cu_{m+1}} \E \biggl[ \frac{1}{\rho_0  \vert \cu_m \vert} \int_{\cu_m} \nabla D_{\{1\}} \psi_m \cdot \bigl(\a^{\{1\}} - \a^{\rho, \{1\}}\bigr)\nabla \psi_m^{\rho} \d \mu \biggr]\, \d x_1\biggr|\\
&\leq \biggl| \E \biggl[ \frac{1}{\rho_0  \vert \cu_m \vert}\int_{\cu_m} \int_{\cu_{m+1} \setminus (\cu + z)}  \nabla D_{\{1\}} \psi_m \cdot \E_{\rho}\bigl[\bigl(\a^{\{1\}} - \a^{\rho, \{1\}}\bigr)\nabla \psi_m^{\rho} \bigr] \, \d x_1 \,  \d \mu \biggr]\biggr|\\
&\hspace{1cm} + \biggl| \E \biggl[ \frac{1}{\rho_0  \vert \cu_m \vert}\int_{\cu_m} \int_{\cu +z}  \nabla D_{\{1\}} \psi_m \cdot \E_{\rho}\bigl[ \bigl(\a^{\{1\}} - \a^{\rho, \{1\}}\bigr)\nabla \psi_m^{\rho} \bigr] \, \d x_1 \,  \d \mu \biggr]\biggr|.
\end{align*}
The second term may be bounded by $\lesssim \rho$ using again \eqref{a.local} and an argument analogous to the one used for \eqref{step3.term.a}. On the other hand, since by \eqref{a.local}, for every $z \in \cu_m$ and $x \in \cu_m \setminus (\cu + z)$ we have that
$$
\a^{\{1\}}(\mu , z)-\a^{\rho, \{1\}}(\mu, z) = \1_{\{\mu_\rho(\cu + z) \geq 1\}} \bigl(\a(\mu, z) - \a^\rho(\mu, z)\bigr),
$$
the first term on the right-hand side above may be rewritten as
\begin{align*}
&\E \biggl[ \frac{1}{\rho_0  \vert \cu_m \vert}\int_{\cu_m} \int_{\cu_{m+1} \setminus (\cu + z)}  \nabla D_{\{1\}} \psi_m \cdot \E_{\rho}\bigl[\bigl(\a^{\{1\}} - \a^{\rho, \{1\}}\bigr)\nabla \psi_m^{\rho} \bigr] \, \d x_1 \,  \d \mu \biggr]\\
&= \E \biggl[ \frac{1}{\rho_0  \vert \cu_m \vert}\int_{\cu_m} \bigl(\int_{\cu_{m+1} \setminus (\cu + z)} \nabla D_{\{1\}} \psi_m \d x_1 \bigr) \cdot \E_{\rho}\bigl[\1_{\{\mu_\rho(\cu + z) \geq 1\}}\bigl(\a- \a^{\rho}\bigr)\nabla \psi_m^{\rho} \bigr]  \d \mu \biggr].
\end{align*}
We may bound this term by $\lesssim \rho$ by appealing once again to the Cauchy--Schwarz inequality and Lemmas \ref{l.corrector.1} and \ref{l.aux.indicator}. This yields that also the second integral in~\eqref{11.m.step3.b}  is bounded by $\lesssim \rho$.  This establishes \eqref{11.m.step3.a} and concludes the proof of Step~3. Proposition \ref{p.C11.m} is therefore proved.
\end{proof}

\subsection{Proof of Theorem \ref{t.C11}}\label{sub.C11.c}
Equipped with Proposition \ref{p.C11.m}, we are now ready to prove the main result of this section.
\begin{proof}[Proof of Theorem \ref{t.C11}]
A first consequence of Proposition~\ref{p.C11.m} is that $\{ c_{1,m}(\rho_0)\}_{m\in \N}$ in~\eqref{c_1.m} is uniformly bounded over $m\in \N$. Indeed, by definition \eqref{c_1.m}, assumptions \eqref{a.elliptic}-\eqref{a.local} on $\a$ and the Cauchy--Schwarz inequality, we have that
\begin{align*}
|c_{1,m}(\rho_0)|& = \biggl |\E \Ll[ \frac{1}{\rho_0 |\cu_m|} \int_{\cu_m} \nabla \psi_m \cdot  \int_{(\cu +z)} (\aone - \a) \nabla \psi_m^{\{1\}}  \, \d x_1 \, \d \mu(z) \Rr] \biggr |\\
& \leq \E \Ll[ \frac{1}{\rho_0 |\cu_m|} \int_{\cu_m} |\nabla \psi_m|^2 \d \mu \Rr]^{\frac 1 2} \E \Ll[ \frac{1}{\rho_0 |\cu_m|} \int_{\cu_m} \int_{\cu+z} |\nabla  \psi_m^{\{ 1\}}|^2 \, \d \mu \Rr]^{\frac 1 2}.
\end{align*}
The first factor on the right-hand side above is bounded thanks to \eqref{energy.basic}. The second one can be controlled by the triangle inequality, Lemma \ref{l.corrector.1} and again \eqref{energy.basic}.

Let $\rho_0 > 0$ be fixed. The uniform bound for $\{ c_{1,m}(\rho_0)\}_{m\in \N}$ implies that we may find a subsequence (possibly depending on $\rho_0$) and a number $c^*_1(\rho_0)$ such that 
$$
\lim_{j \to +\infty }c_{1,m_j}(\rho_0) := c^*_1(\rho_0).
$$
Passing to the limit along this subsequence in the inequality \eqref{expansion.11.m} of Proposition~\ref{p.C11.m} and using \eqref{limit.increments}, we infer that for every $\rho > 0$
\begin{align}\label{unique.limit.1}
|\Delta^\rho(\rho_0) - c^*_1(\rho_0) \rho| \lesssim \rho^2.
\end{align}
On the one hand, the arbitrariness of $\rho > 0$ in this inequality implies that the value $c_1^*(\rho_0)$ is the limit for the full sequence $\{ c_{m,1}(\rho_0)\}_{m\in\N}$, which we denote by $c_1(\rho_0)$. On the other hand, definition~\eqref{eq:defPer} allows to immediately infer that for every $\rho_0 > 0$ fixed and $\rho\ge 0$ tending to zero, we have
\begin{align}\label{rhs.expansion}
q \cdot \ab^{-1}(\rho_0 + \rho)q = q \cdot \ab^{-1}(\rho_0)q + c_1(\rho_0)\rho + O(\rho^2).
\end{align}
To conclude the proof of Theorem \ref{t.C11}, it thus remains to extend \eqref{rhs.expansion} to negative values of $\rho$ that tend to zero. We do this by applying \eqref{rhs.expansion} with the pair $(\rho_0,\rho_0 + \rho)$ substituted with $(\rho_0 - \rho,\rho_0)$ to get that, as $\rho \ge 0$ tends to zero,
\begin{align}\label{lhs.expansion}
q \cdot \ab^{-1}(\rho_0 - \rho)q = q \cdot \ab^{-1}(\rho_0)q - c_1(\rho_0 - \rho)\rho + O(\rho^2).
\end{align}
To conclude the desired expansion, it remains to show that we may replace $c_1(\rho_0- \rho)$ by $c_1(\rho_0)$ in this display. Defining $f(\cdot) := q \cdot \ab^{-1}(\cdot )q$ and appealing to identity \eqref{rhs.expansion}, we write
\begin{align*}%\label{c1.lip}
c_1(\rho_0 - \rho)&  = \frac{f(\rho_0) -f(\rho_0- \rho)}{\rho} + O(\rho)\\
& =  \frac{f(\rho_0) -f(\rho_0 +\rho)}{\rho}  +  \frac{f(\rho_0+\rho) -f(\rho_0-\rho)}{\rho} + O(\rho)\\
& = -c_1(\rho_0) + 2 c_1(\rho_0 - \rho) + O(\rho).
\end{align*}
In the last line, we apply \eqref{rhs.expansion} at $\rho_0$ for the first term, and \eqref{rhs.expansion} at $(\rho_0 - \rho)$ for the second term wit step size $2\rho$. The notation $O(\rho)$ is valid as the hidden constant is independent from the density. The equation above gives us 
\begin{align*}
c_1(\rho_0 - \rho) = c_1(\rho_0) + O(\rho).  
\end{align*}
Inserting this into \eqref{lhs.expansion} yields that \eqref{rhs.expansion} holds also for negative perturbations $\rho$. This completes the proof of Theorem~\ref{t.C11}. 
\end{proof}

%\subsection{Auxiliary results on Poisson point processes}\label{sub.C11.aux}

%
%
%
%
%%%%%%%%%%%%%%%%%%%%%%%%%%%%%%%%%%%%%%%%%%%%%%%%%%%%%%%%%%%%%%
%%%%%%%%%%%%%%%%%%%%%%%%%%%%%%%%%%%%%%%%%%%%%%%%%%%%%%%%%%%%%%
%
%
%
%
\section{Higher-order differentiability}
\label{s.higher-order}

The goal of this section is to generalize the results of the previous section, and ultimately prove Theorem~\ref{t.smooth} stating that the mapping $\rho_0 \mapsto \ahom(\rho_0)$ is infinitely differentiable. 

%We start by exploring some algebraic properties of the difference operators $(D_E)$ defined in Section~\ref{s.statements}.

%\subsection{Finite-difference operators}
As a preparation, we recall that the notation $f^E$ and $D_E$ is introduced in~\eqref{e.def.fE}, \eqref{e.def.DE} and state basic algebraic properties of these operators.
\begin{proposition}[Algebraic properties]
For every ${f,g: \mmd(\Rd) \to \R}$ and every finite set $E \subset \N_+$, the following identities hold.
\begin{itemize}
\item \textit{Inclusion-exclusion formula}
\begin{align}\label{eq:Difference}
D_E f = \sum_{F \subset E} (-1)^{\vert E \setminus  F\vert} f^F.
\end{align}
\item \textit{Telescoping formula}
\begin{align}\label{eq:Telescope}
f^E = \sum_{F \subset E} D_F f.
\end{align}
\item \textit{Leibniz formulas}
\begin{align}\label{eq:Leibniz1}
D_E(fg) = \sum_{F \subset E} (D_F f)(D_{E \setminus F} g^F),
\end{align}
and
\begin{align}\label{eq:Leibniz2}
D_E(fg) = \sum_{F, G \subset E, F \cup G = E} (D_F f)(D_{G} g).
\end{align}
\end{itemize}
\end{proposition}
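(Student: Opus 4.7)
The plan is to derive all four identities from the purely algebraic fact that the single-index operators $I_i : f \mapsto f^{\{i\}}$ commute with one another, together with the definition $D_i = I_i - \mathrm{id}$. Since the $D_i$'s inherit commutativity from the $I_i$'s, the definition \eqref{e.def.DE} gives the operator identity $D_E = \prod_{i \in E}(I_i - \mathrm{id})$, and this is the engine behind every computation.

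For \eqref{eq:Difference}, I would expand that product by distributing over subsets $F \subset E$, picking $I_i$ for $i \in F$ and $-\mathrm{id}$ for $i \in E \setminus F$; since $\prod_{i \in F} I_i \, f = f^F$, this yields $D_E f = \sum_{F \subset E}(-1)^{|E \setminus F|} f^F$ immediately. The telescoping formula \eqref{eq:Telescope} is obtained by the dual expansion $I_E = \prod_{i \in E}(\mathrm{id} + D_i)$ applied to $f$, or equivalently as the Möbius inversion of \eqref{eq:Difference} on the Boolean lattice of subsets of $E$.

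For the first Leibniz formula \eqref{eq:Leibniz1}, I would proceed by induction on $|E|$. The single-index identity $D_i(fg) = (D_i f)\, g^{\{i\}} + f\, (D_i g)$ follows from the elementary rewriting $f^{\{i\}} g^{\{i\}} - fg = (f^{\{i\}}-f)g^{\{i\}} + f(g^{\{i\}}-g)$. For the induction step, writing $E = E' \sqcup \{j\}$ and applying $D_j$ to the right-hand side of \eqref{eq:Leibniz1} at $E'$, the key small observation is that for any $H$ with $j \notin H$ one has $(D_H h)^{\{j\}} = D_H(h^{\{j\}})$; this is immediate from \eqref{eq:Difference}, since the operators $I_k$ for $k \in H$ commute with $I_j$. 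Applying the single-index product rule and separating the resulting sum according to whether a given subset of $E$ contains $j$ or not regroups the terms exactly as the formula for $E$.

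Finally, \eqref{eq:Leibniz2} follows from \eqref{eq:Leibniz1} combined with telescoping. Substituting $g^F = \sum_{F' \subset F} D_{F'} g$ inside the factor $D_{E \setminus F} g^F$ and using the disjointness of $E \setminus F$ and $F' \subset F$ together with the commutativity of the $D_i$'s, I get $D_{E \setminus F} g^F = \sum_{F' \subset F} D_{(E \setminus F) \cup F'} g$. Setting $G := (E \setminus F) \cup F'$, the map $(F, F') \mapsto (F, G)$ is a bijection from $\{(F, F') : F' \subset F \subset E\}$ onto $\{(F, G) : F, G \subset E, \, F \cup G = E\}$, with inverse $(F, G) \mapsto (F, G \cap F)$; substituting converts \eqref{eq:Leibniz1} into \eqref{eq:Leibniz2}. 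The entire argument is combinatorial and presents no real obstacle: the only thing requiring a line of care is the commutation $(D_H h)^{\{j\}} = D_H(h^{\{j\}})$ and the explicit verification of the bijection $(F, F') \leftrightarrow (F, G)$.
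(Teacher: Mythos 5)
Your proposal is correct. For \eqref{eq:Leibniz1}, which is the only identity the paper actually proves, your induction on $|E|$ is the same as the paper's (same base case $D_i(fg)=(D_if)\,g^{\{i\}}+f\,(D_ig)$, same regrouping after applying the single-index rule). The one step you flag as requiring care, namely $(D_H h)^{\{j\}} = D_H(h^{\{j\}})$ for $j\notin H$, is indeed exactly what justifies turning $(D_F f)(D_{\dbint{1,n}\setminus F}\,g^F)$ into the two $E$-level terms in the paper's induction, so you have identified the right crux.

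For the other three identities the paper offers no proof beyond the remark that they ``can be proved by induction,'' and there your route is genuinely different and arguably cleaner. Writing $D_E=\prod_{i\in E}(I_i-\mathrm{id})$ and $I_E=\prod_{i\in E}(\mathrm{id}+D_i)$ and expanding the commuting product gives \eqref{eq:Difference} and \eqref{eq:Telescope} in one line, without an explicit induction, and makes transparent that the two are M\"obius inverses of one another on the Boolean lattice. Deriving \eqref{eq:Leibniz2} from \eqref{eq:Leibniz1} by substituting $g^F=\sum_{F'\subset F}D_{F'}g$, using $D_{E\setminus F}D_{F'}=D_{(E\setminus F)\sqcup F'}$ (disjointness), and reindexing via the bijection $(F,F')\leftrightarrow(F,G)$ with $G=(E\setminus F)\cup F'$ and inverse $F'=G\cap F$ is a correct and pleasant alternative to a direct induction; your verification that $F\cup G=E$ and that $E\setminus F\subset G$ restores $G$ from $(F,G\cap F)$ is exactly what is needed. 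In short: same engine as the paper where the paper gives a proof, and correct, slightly more algebraic arguments where the paper is silent.
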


\begin{proof}
These elementary identities can be proved by induction. We show \eqref{eq:Leibniz1} for illustration. Without loss of generality, we can assume that $E = \dbint{1,n}$ for some integer $n \in \N_+$. The case $n = 1$ is clear:
\begin{align}\label{eq:DiffProduct}
D_{1}(fg) =f^{\{1\}} g^{\{1\}} - f g = (D_{1} f)g^{\{1\}} + f (D_{1} g).
\end{align} 
Assuming that the formula is valid for $E =\dbint{1,n}$, we can then write
\begin{align*}
D_{\dbint{1, n+1}}(fg) &= D_{n+1} (D_{\dbint{1,n}}(fg))\\
&= D_{n+1} \Ll(\sum_{F \subset \dbint{1,n}} (D_F f)(D_{\dbint{1,n} \setminus F} g^F)\Rr)\\
&= \sum_{F \subset \dbint{1,n}} D_{n+1} \Ll((D_F f)(D_{\dbint{1,n} \setminus F} g^F)\Rr).
\end{align*}
We then use \eqref{eq:DiffProduct} to assert that
\begin{align*}
&D_{n+1} \Ll((D_F f)(D_{\dbint{1,n} \setminus F} g^F)\Rr)\\
& = (D_{F \cup \{n+1\}} f)(D_{\dbint{1,n} \setminus F} g^{F \cup \{n+1\}}) + (D_F f)(D_{\dbint{1,n+1} \setminus F} g^F) \\
& = (D_{F \cup \{n+1\}} f)(D_{\dbint{1,n+1} \setminus (F \cup \{n+1\})} g^{F \cup \{n+1\}}) + (D_F f)(D_{\dbint{1,n+1} \setminus F} g^F).
\end{align*}
Combining the two previous displays yields the claim.
\end{proof}

\subsection{Main strategy}\label{subsec:Strategy}
In this section, we present the structure of the proof of Theorem~\ref{t.smooth}. We will in fact mostly focus on the following finite-volume version of this statement. Recall the definitions of $\Delta^\rho$ and $\Delta^{\rho}_m$ in \eqref{eq:defPer} and \eqref{eq:defPerApp}, and we know from \eqref{p.C11.m} that ${\Delta^\rho = \lim_{m \to \infty} \Delta^\rho_m}$. In order to lighten the expressions appearing below, we use the notation in \eqref{eq:SymbolInte} to simplify the integration with respect to several particles.
\begin{proposition}[Smoothness in finite volume]
For every $\rho_0 > 0$ and $k,m \in \N_+$, we define
\label{prop:SmoothFinite}
\begin{equation}  
\label{e.def.ckm}
c_{k,m}(\rho_0) := \int_{(\Rd)^{\dbint{1,k}}}  \E\bigg[ \frac{1}{\rho_0 \vert \cu_m \vert} \int_{\cu_m} \nabla \psi_m
\cdot D_{\dbint{1,k}}\Ll((\a - \a^\# )\nabla \psi^\#_m\Rr) \, \d \mu \bigg] .
\end{equation}
There exists a positive constant $C_k(d,\Lambda) < \infty$ such that for every $m \in \N_+$ and $\rho_0 > 0$, 
\begin{equation}\label{eq:ckmBound}
\vert c_{k,m}(\rho_0)\vert \le C_k.
\end{equation}
Moreover, the quantity $\Delta^\rho_m(\rho_0)$ defined in \eqref{eq:defPerApp} satisfies that, for every $k \in \N_+$,
\begin{equation}\label{eq:ExpansionFinite} 
\Delta^\rho_m(\rho_0) =  \sum_{j = 1}^{k} c_{j,m}(\rho_0) \frac{\rho^j}{j!} + R_k(m, \rho_0, \rho),
\end{equation}
where $R_k(m, \rho_0, \rho)$ is such that, as $\rho > 0$ tends to zero and uniformly over $m$ and $\rho_0$,
\begin{align*}
R_k(m, \rho_0, \rho) = O(\rho^{k+1}).
\end{align*}
\end{proposition}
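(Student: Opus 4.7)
The plan is to generalize the argument used for Proposition~\ref{p.C11.m} by performing a Taylor-type expansion of $\Delta^\rho_m(\rho_0)$ in powers of $\rho$, exploiting the Poisson structure of $\mu_\rho$ together with the algebraic identities for the finite difference operators $D_E$ recorded at the start of this section. The starting point is Lemma~\ref{l.increment}, which represents
\begin{equation*}
\Delta^\rho_m(\rho_0) = \E\Ll[\frac{1}{\rho_0 \vert \cu_m \vert}\int_{\cu_m} \nabla\psi_m \cdot (\a - \a^\rho)\nabla\psi_m^\rho \, \d \mu\Rr].
\end{equation*}
The idea is to condition on the number of atoms of $\mu_\rho$ near each atom $z$ of $\mu$, using locality \eqref{a.local} to reduce $(\a - \a^\rho)\nabla\psi_m^\rho$ at $z$ to a local perturbation supported in $\cu + z$.

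The expansion itself then proceeds as follows. Because of \eqref{a.local}, the integrand at $z$ only depends on $\mu_\rho \mres (\cu + z)$. Partitioning according to the events $\{\mu_\rho(\cu + z) = n\}$ for $n = 0, 1, \ldots, k$ and $\{\mu_\rho(\cu + z) \geq k+1\}$, and iterating the first identity in \eqref{l.aux.indicator}, each contribution with $n$ fixed is rewritten as the integral over $(\cu + z)^n$ of an expectation in which $n$ deterministic particles $x_1,\ldots,x_n$ have been added to the configuration. Applying the telescoping formula \eqref{eq:Telescope} to both $\psi_m^{\rho,\dbint{1,n}}$ and $\a^{\rho,\dbint{1,n}}$ then expresses each such term as a sum of objects involving $D_E \psi_m$ and $D_E \a$ for subsets $E \subset \dbint{1,n}$. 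The residual contribution from $\{\mu_\rho(\cu + z) \geq k+1\}$ is absorbed into $R_k(m,\rho_0,\rho) = O(\rho^{k+1})$ via the second inequality in \eqref{l.aux.indicator} combined with the higher-order generalization of Lemma~\ref{l.corrector.1} anticipated in the remark following Proposition~\ref{p.C11.m} (which will appear as Proposition~\ref{prop:KeyEstimate}).

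The crux of the argument is to identify the coefficient of $\rho^j/j!$ with the expression $c_{j,m}(\rho_0)$ defined in \eqref{e.def.ckm}. After the naive expansion, one finds terms of the form
\begin{equation*}
\int_{(\R^d)^{\dbint{1,j}}}\E\Ll[\frac{1}{\rho_0 \vert \cu_m \vert}\int_{\cu_m} \nabla\psi_m \cdot (\a - \a^{\dbint{1,j}})\nabla\psi_m^{\dbint{1,j}} \, \d \mu\Rr],
\end{equation*}
and the task is to rewrite them in the symmetric form involving $D_{\dbint{1,j}}$ applied to $(\a - \a^\#)\nabla\psi_m^\#$. This step generalizes Steps~1 and~2 of the proof of Proposition~\ref{p.C11.m}: by subtracting and adding suitable combinations of the variational identities from Proposition~\ref{cor:var.formulation} with test functions of the form $D_F \psi_m^{G \setminus F}$ (admissible by Remark~\ref{rmk:Combination}), and invoking the Leibniz formulas \eqref{eq:Leibniz1}--\eqref{eq:Leibniz2}, one forces the operator $D_{\dbint{1,j}}$ to act jointly on the product $(\a - \a^\#)\nabla\psi_m^\#$.

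The uniform bound $\vert c_{k,m}(\rho_0)\vert \leq C_k$ is then obtained by expanding $D_{\dbint{1,k}}((\a - \a^\#)\nabla\psi_m^\#)$ with \eqref{eq:Leibniz1} into a sum of products of the form $(D_F(\a - \a^\#))\,(D_{\dbint{1,k}\setminus F}\nabla\psi_m^\#)$ for $F \subset \dbint{1,k}$. The matrix factors are uniformly bounded by \eqref{a.elliptic}, and \eqref{a.local} localizes the integration in the added particles to a cube of unit size around $z$; the remaining products involving $\nabla\psi_m$ and $D_E \nabla\psi_m$ are then controlled via the Cauchy--Schwarz inequality together with \eqref{energy.basic} and the higher-order corrector estimates. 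The main obstacle will be the combinatorial bookkeeping needed to rearrange all the subset sums with enough precision to identify $c_{j,m}(\rho_0)$ and, simultaneously, to track the $(k!)^2$ growth of $C_k$ announced in the paper; the uniformity in $\rho_0$ follows once one observes that all bounds involve only the normalized averages $\E[\tfrac{1}{\rho_0 \vert \cu_m \vert} \int_{\cu_m} \cdot \, \d \mu]$, whose estimates from Proposition~\ref{prop:KeyEstimate} are $\rho_0$-independent.
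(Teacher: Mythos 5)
There is a genuine gap in your strategy. You claim that ``Because of \eqref{a.local}, the integrand at $z$ only depends on $\mu_\rho \mres (\cu + z)$,'' and this is false. The factor $(\a - \a^\rho)(z)$ does localize, but the factor $\nabla \psi_m^\rho(z) = \nabla\psi_m(\mu+\mu_\rho,z)$ is $\mcl F_{Q_{3^m+1}}$-measurable: it depends on \emph{every} particle of $\mu_\rho$ in the large box, not just those within distance $1/2$ of $z$. Consequently, the per-atom conditioning on $\{\mu_\rho(\cu+z)=n\}$ does not resolve the $\rho$-dependence of the integrand; after applying \eqref{l.aux.indicator} you still carry a $\psi_m^{\rho,\dbint{1,n}}$ with the superscript $\rho$ in place, not a fixed coefficient. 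This is exactly the issue that makes the $k=1$ proof of Proposition~\ref{p.C11.m} nontrivial: one has to further compare $\psi_m^{\rho,\{1\}}$ with $\psi_m^{\{1\}}$ via the variational identities and show the difference is $O(\rho)$. Iterating that comparison to all orders $k$ along the lines you sketch would be a substantial extra construction, not a step you can simply cite.

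The paper avoids this entirely by using a \emph{global} Poisson expansion over $\cu_{m+1}$ in Lemma~\ref{lem:Expansion}: since $\psi_m$ is $\mcl F_{Q_{3^m+1}}$-measurable, one expands $\E$ over the total number $j$ of $\mu_\rho$-atoms in $\cu_{m+1}$, producing
\begin{align*}
\Delta^\rho_m = e^{-\rho|\cu_{m+1}|}\sum_{j\geq 0}\frac{(\rho|\cu_{m+1}|)^j}{j!}\fint_{(\cu_{m+1})^{\dbint{1,j}}}\E\Ll[\frac{1}{\rho_0|\cu_m|}\int_{\cu_m}\nabla\psi_m\cdot(\a-\a^{\dbint{1,j}})\nabla\psi_m^{\dbint{1,j}}\,\d\mu\Rr],
\end{align*}
in which the superscripts carry \emph{no} residual $\rho$-dependence. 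The coefficients $c_{j,m}$ then fall out from expanding $e^{-\rho|\cu_{m+1}|}$ and an inclusion--exclusion rearrangement \eqref{eq:Difference}; in particular, the variational identities of Proposition~\ref{cor:var.formulation} are \emph{not} needed to identify the coefficients --- you misattribute their role. They are used only in Proposition~\ref{prop:KeyEstimate} to bound those coefficients. Finally, the remainder $R_k$ is not estimated via the tail event $\{\mu_\rho(\cu+z)\geq k+1\}$ as you propose; the paper first establishes analyticity of $\rho\mapsto\Delta^\rho_m$ and then invokes Taylor's formula in integral form together with the uniform bound \eqref{eq:ckmBound}. This is essential because the naive coefficient bound $C_k\sim(k!)^2$ is too weak to sum the series directly. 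The only part of your outline that matches the paper is the final Cauchy--Schwarz $+$ Leibniz $+$ Proposition~\ref{prop:KeyEstimate} argument for \eqref{eq:ckmBound}.
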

In Subsection~\ref{sub:proof.main}, we will obtain our main result Theorem~\ref{t.smooth} as a corollary of Proposition~\ref{prop:SmoothFinite}. For now, we present the structure of the proof of this proposition.

The first step of the proof of Proposition~\ref{prop:SmoothFinite} consists in identifying a convenient expansion for $\Delta_m^\rho$. 
As a starting point, one can check that if a function ${f : \mmd(\Rd) \to \R}$ is bounded and local, then we have
\begin{align}
\label{e.classical.formula}
\E[f(\mu+\mu_\rho)] = \sum_{k=0}^{\infty} \frac{\rho^k}{k!} \Ll(\int_{(\Rd)^{\dbint{1,k}}} \E[D_{\dbint{1,k}} f]  \Rr).
\end{align}
(See for instance \cite[Theorem 19.2]{bookPoisson}; a self-contained argument is given below.) Generalizing this observation, it is natural to expect that $\Delta^\rho_m$ can be rewritten from~\eqref{incr.rep.1} as
\begin{equation*}  %\label{e.}
\sum_{k=1}^{\infty} \frac{\rho^k}{k!} \Ll(\int_{(\Rd)^{\dbint{1,k}}}  \E\Ll[ \frac{1}{\rho_0 \vert \cu_m \vert} \int_{\cu_m} \nabla \psi_m \cdot D_{\dbint{1,k}}\Ll((\a - \a^\# )\nabla \psi^\#_m\Rr) \, \d \mu \Rr]  \Rr),
\end{equation*}
where we dropped the summand indexed by $k = 0$, which vanishes. Notice that in the formula above, we could replace $\int_{(\Rd)^{\dbint{1,k}}}$ with $\int_{(\cu_{m+1})^{\dbint{1,k}}}$, because $\psi_m$ is $\mcl F_{Q_{3^m+1}}$-measurable, $\a$ is also local and the perturbation by adding particles outside $\cu_{m+1}$ will not contribute; this observation will be applied several times in the sequel. 
The following lemma states that the expansion formula is indeed valid for $\Delta^\rho_m$; its proof is provided in Subsection~\ref{subsec:Expansion}.

\begin{lemma}[Expansion of $\Delta^\rho_m$]
\label{lem:Expansion}
For each $m \in \N$, the quantity $\Delta^\rho_m$ is an analytic function of $\rho$ and satisfies
\begin{align}\label{eq:ExpansionModi}
\Delta^\rho_m = \sum_{k=1}^{\infty} \frac{\rho^k}{k!} \Ll(\int_{(\Rd)^{\dbint{1,k}}}  \E\Ll[ \frac{1}{\rho_0 \vert \cu_m \vert} \int_{\cu_m} \nabla \psi_m \cdot D_{\dbint{1,k}}\Ll((\a - \a^\# )\nabla \psi^\#_m\Rr) \, \d \mu \Rr]  \Rr).
\end{align}
\end{lemma}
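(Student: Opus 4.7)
The plan is to expand $\Delta^\rho_m$ as a convergent series in $\rho$ by conditioning on the number of atoms of $\mu_\rho$ inside the finite window $\cu_{m+1}$, and then to identify the $\rho^k$ coefficients with the expressions involving $D_{\dbint{1,k}}$ via the inclusion--exclusion identity~\eqref{eq:Difference}. The starting point is the representation formula of Lemma~\ref{l.increment}: $\Delta_m^\rho = \E[\frac{1}{\rho_0|\cu_m|}\int_{\cu_m}\nabla\psi_m\cdot(\a-\a^\rho)\nabla\psi_m^\rho\,\d\mu]$. Since $\psi_m$ is $\mcl{F}_{\cu_{m+1}}$-measurable and $\a$ is local by~\eqref{a.local}, the integrand depends on $\mu_\rho$ only through $\mu_\rho\mres\cu_{m+1}$. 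Conditioning on $N:=\mu_\rho(\cu_{m+1})\sim\mathrm{Poisson}(\rho|\cu_{m+1}|)$, and using that given $N=n$ the atoms are i.i.d.\ uniform on $\cu_{m+1}$, I obtain
\begin{equation*}
\Delta_m^\rho = e^{-\rho|\cu_{m+1}|}\sum_{n=0}^\infty\frac{\rho^n}{n!}\,A_n, \qquad A_n := \int_{\cu_{m+1}^n}\E\Ll[\frac{1}{\rho_0|\cu_m|}\int_{\cu_m}\nabla\psi_m\cdot(\a-\a^{\dbint{1,n}})\nabla\psi_m^{\dbint{1,n}}\,\d\mu\Rr].
\end{equation*}

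Next, I would bound $|A_n|$ by an expression of the form $C(m)^n\,P(n)$ for some constant $C(m)$ depending only on $m$, $\Lambda$, $|q|$ and a polynomial $P$. Using $|\a-\a^{\dbint{1,n}}|\le 2\Lambda$ pointwise and Cauchy--Schwarz first in the $\d\mu$-integral and then in $\E[\cdot]\int_{\cu_{m+1}^n}$, the task reduces to controlling $\int_{\cu_{m+1}^n}\E[\frac{1}{\rho_0|\cu_m|}\int_{\cu_m}|\nabla\psi_m|^2\,\d\mu]$, which is $\le|\cu_{m+1}|^n|q|^2$ by~\eqref{energy.basic}, and the analogous quantity with $\psi_m$ replaced by $\psi_m^{\dbint{1,n}}$. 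For the latter I would use the domination $\d\mu\le\d(\mu+\sum_{i=1}^n\delta_{x_i})$ and then apply the improved slice-wise bound~\eqref{energy.basic.slice} at the perturbed configuration, which is legitimate because Lemma~\ref{lem:J} asserts that $\psi_m$ does not depend on the underlying density. This yields an $n$-dependent factor of polynomial order, so that $|A_n|\lesssim C(m)^n(1+n)^{1/2}$, and in particular the series above converges absolutely for every $\rho\in\C$; hence $\Delta_m^\rho$ extends to an entire function of $\rho$.

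Finally, I would reorganize the series. Expanding $e^{-\rho|\cu_{m+1}|}=\sum_{k\ge 0}(-|\cu_{m+1}|)^k\rho^k/k!$ and invoking absolute convergence to interchange sums gives
\begin{equation*}
\Delta_m^\rho = \sum_{k=0}^\infty\frac{\rho^k}{k!}\sum_{n=0}^k\binom{k}{n}(-|\cu_{m+1}|)^{k-n}A_n.
\end{equation*}
I would then match the inner bracket to the coefficient appearing in~\eqref{eq:ExpansionModi} using the inclusion--exclusion formula~\eqref{eq:Difference} for $D_{\dbint{1,k}}$: applying $D_{\dbint{1,k}}$ to the hashed expression $(\a-\a^\#)\nabla\psi_m^\#$, noting that $(\a-\a^S)\nabla\psi_m^S$ depends on the additional atoms only through $\{x_i:i\in S\}$, integrating the $k-|S|$ remaining variables on $\cu_{m+1}$ produces the factor $|\cu_{m+1}|^{k-|S|}$, and permutation-relabelling over subsets of fixed size $n$ produces the combinatorial factor $\binom{k}{n}$. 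The $k=0$ term vanishes since $\a-\a=0$, and the outer integration domain $\cu_{m+1}^k$ can be enlarged to $(\Rd)^k$ by locality of $\a$ and $\psi_m$, recovering exactly the formula~\eqref{e.def.ckm} and completing the identification.

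The main obstacle is the bound on $A_n$: the slice-wise estimate~\eqref{energy.basic.slice} is naturally phrased for an integration against the full measure $\mu+\sum_i\delta_{x_i}$, whereas $A_n$ only integrates against $\mu$. This mismatch is bypassed by the trivial domination $\d\mu\le\d(\mu+\sum_i\delta_{x_i})$, and the density-independence of $\psi_m$ established in Lemma~\ref{lem:J} is what makes~\eqref{energy.basic.slice} applicable at the perturbed configuration. Once this $n$-uniform control is secured, all remaining steps are routine manipulations of absolutely convergent double series.
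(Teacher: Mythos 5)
Your proposal follows the paper's own proof essentially step for step: the same conditioning on $\mu_\rho(\cu_{m+1})$ to obtain the Poisson expansion, the same Cauchy--Schwarz reduction and appeal to the improved slice estimate~\eqref{energy.basic.slice} (enabled by the density-independence of $\psi_m$ from Lemma~\ref{lem:J}) to control the coefficients and secure absolute convergence, and the same reorganization of the double series followed by the inclusion--exclusion identity~\eqref{eq:Difference} to produce $D_{\dbint{1,k}}$. The one place where the paper is more explicit than you is in the bound on $A_n$: to apply~\eqref{energy.basic.slice} at the perturbed configuration one should first split the integration $\int_{\cu_{m+1}^n}$ according to which of the added particles fall in $\cu_m$ (treated as part of the particle count) versus in $\cu_{m+1}\setminus\cu_m$ (absorbed into the conditioned outer environment); your domination trick $\d\mu\le\d(\mu+\sum_i\delta_{x_i})$ handles the mismatch of integration measures correctly, but the inside/outside bookkeeping is still needed to make the conditional expectation in~\eqref{energy.basic.slice} apply as stated.
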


The remainder of the proof of Proposition~\ref{prop:SmoothFinite} consists in the analysis of the summands in the expansion provided Lemma~\ref{lem:Expansion}. 
Applying the Leibniz formula \eqref{eq:ExpansionModi} to these summands:
\begin{equation}\label{eq:HigerOrderLeibniz}
D_{\dbint{1,k}}\Ll((\a - \a^\# )\nabla \psi^\#_m\Rr) 
= \sum_{\substack{E \cup F = \dbint{1,k}}} D_E (\a - \a^\# ) (D_{F} \nabla \psi_m),
\end{equation}
we are led to the expansion
\begin{align}\label{eq:Expansion}
\Delta^\rho_m  &= \sum_{k=1}^{\infty} \frac{\rho^k}{k!} \sum_{E \cup F = \dbint{1,k}} I(m,\rho_0,E,F),
\end{align}
where the quantity $I(m, \rho_0, E,F)$ is defined for  $E, F$ finite subsets of $\N_+$,
\begin{equation}
\label{eq:defI}
I(m,\rho_0,E,F) := \int_{(\Rd)^{\dbint{1,k}}} \E\Ll[ \frac{1}{\rho_0 \vert \cu_m \vert} \int_{\cu_m} \nabla \psi_m \cdot  D_E (\a - \a^\# ) D_{F} (\nabla \psi_m) \, \d \mu \Rr].
\end{equation}
It suffices to give a uniform estimate for the quantity $I(m,\rho_0,E,F)$ with respect to $m$ and $\rho_0$. Heuristically, the $k$ derivatives act either on the conductance or on the corrector, and they compensate with the integration $\int_{(\Rd)^{\dbint{1,k}}}$. With some more reduction, the estimation of these terms will be based on the following key result.

\begin{proposition}[Key estimate]
There exists a family of constants $\{C(i,j)\}_{i \geq j \geq 0}$ such that for every finite sets $G \subset  F \subset \N_+$,  $m \in \N_+$ and $\rho_0 > 0$, we have
\label{prop:KeyEstimate}
\begin{align}\label{eq:KeyEstimate}
\int_{(\Rd)^{F \setminus G }} \E\Ll[ \frac{1}{\rho_0 \vert \cu_m \vert} \int_{\cu_m} \Ll\vert \int_{(\Rd)^{G}} D_F \nabla \psi_m  \Rr \vert^2 \, \d \mu \Rr] \leq C(\vert F \vert, \vert G \vert) .
\end{align} 
\end{proposition}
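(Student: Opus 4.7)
The plan is to prove \eqref{eq:KeyEstimate} by strong induction on $|F|$, with all $G \subset F$ handled simultaneously at each level. The base case $|F|=0$ reduces to the basic energy estimate \eqref{energy.basic}, since $D_\emptyset \nabla \psi_m = \nabla \psi_m$. For the inductive step, the main tool is the variational identity from Proposition~\ref{cor:var.formulation}: taking the alternating sum $\sum_{E \subset F}(-1)^{|F \setminus E|}$ of \eqref{eq:Harmonic1} with $\rho = 0$ yields, for any test function $v$ depending on $(x_j)_{j \in F}$,
\[
\int_{(\cu_{m+1})^F} \E\Ll[\int_{\cu_m} \nabla v \cdot D_F(\a^\# \nabla \psi_m^\#) \, \d\mu\Rr] = 0,
\]
since the contribution involving $q$ drops out when $F \neq \emptyset$.

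The crucial algebraic observation is obtained by substituting the telescoping identity \eqref{eq:Telescope}, $\nabla \psi_m^E = \sum_{E' \subset E} D_{E'} \nabla \psi_m$, into the alternating sum defining $D_F(\a^\#\nabla\psi_m^\#)$ and rearranging to produce
\[
D_F(\a^\# \nabla \psi_m^\#) = \a^F \, D_F \nabla \psi_m + \sum_{\emptyset \neq A \subset F} (D_A \a^{F \setminus A}) \, D_{F \setminus A} \nabla \psi_m.
\]
Two features are critical: the diagonal coefficient $\a^F$ is uniformly elliptic (bounded below by $\mathrm{Id}$), and every non-diagonal term contains $D_{F'} \nabla \psi_m$ with $|F'|<|F|$, hence accessible to the inductive hypothesis.

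For $G = \emptyset$, I would take $v = D_F \psi_m$. The diagonal pairing gives, by ellipticity of $\a^F$, the lower bound $\int \E[\int |D_F \nabla \psi_m|^2\,\d\mu]$, while Cauchy--Schwarz and Young's inequality dominate the non-diagonal pairings by terms of the form $\int \E[\int |D_A \a^{F\setminus A}|^2 |D_{F\setminus A} \nabla \psi_m|^2\,\d\mu]$. The locality \eqref{a.local} forces $D_A \a^{F \setminus A}$ to vanish unless $(x_j)_{j \in A} \in (\cu + z)^A$, a set of unit Lebesgue measure, which compresses the $(x_j)_{j \in A}$-integration into a bounded factor and leaves precisely $\int_{(\cu_{m+1})^{F \setminus A}} \E[\int |D_{F\setminus A} \nabla \psi_m|^2\,\d\mu] \leq C(|F \setminus A|, 0)$ by the inductive hypothesis, yielding the desired bound.

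For general $G \subset F$, I would use the test function $v_G := \int_{(\cu_{m+1})^G} D_F \psi_m \, \d(x_j)_G$, whose gradient is exactly the quantity $\Phi_G := \int_{(\cu_{m+1})^G} D_F \nabla \psi_m$ to be controlled. Since $v_G$ does not depend on $(x_j)_{j \in G}$, the $\int_{(\cu_{m+1})^G}$-piece of the outer integration may be moved inside onto $D_F(\a^\# \nabla \psi_m^\#)$, producing $\int_{(\cu_{m+1})^{F \setminus G}} \E[\int \Phi_G \cdot \bar W \, \d\mu] = 0$. Splitting $\a^F = \a + (\a^F - \a)$ in the diagonal extracts a clean $\a \Phi_G$ contribution (giving the lower bound $\int \E[\int|\Phi_G|^2\,\d\mu]$ by ellipticity); the residual $(\a^F-\a)$ together with the non-diagonal pieces are handled by telescoping once more, writing $\psi_m^{F \setminus A} = \sum_{F'' \subset F\setminus A} D_{F''} \psi_m$ via \eqref{eq:Telescope}, and reducing each contribution to integrals involving $|D_{F''} \nabla \psi_m|^2$ with $|F''| \leq |F|$. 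Those with $|F''| < |F|$ are covered directly by the inductive hypothesis, while those with $|F''|=|F|$ arise only multiplied by coefficients $D_{A'} \a^{\cdots}$ whose locality in $(\cu+z)^{A'}$ is what ultimately prevents any $|\cu_{m+1}|$-volume factor, allowing them either to reduce to a lower-order inductive term or to be absorbed on the left-hand side by Young's inequality with a sufficiently small parameter. The hard part, in my view, is this final bookkeeping: tracking the combinatorics carefully enough to ensure that every residual carrying $D_F \nabla \psi_m$ is either genuinely lower order through its $A'$-localization or can be absorbed with a strict contraction, which is also what produces the $(k!)^2$-type dependence of the constants $C_k$ mentioned after the statement.
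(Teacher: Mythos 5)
Your overall approach matches the paper's: inclusion-exclusion applied to the variational identities of Proposition~\ref{cor:var.formulation} to produce the orthogonality relation; a Leibniz decomposition of $D_F(\a^\#\nabla\psi_m^\#)$ to isolate a uniformly elliptic diagonal; locality of the $\a$-differences to compress the auxiliary integrations; and an induction to close. You also correctly observe that the test function $\int_{(\cu_{m+1})^G}D_F\psi_m$ is admissible by linearity of the first-variation identity, which streamlines the paper's ``doubling variables'' device (the paper introduces a disjoint copy $G'$ of $G$ and relabels; your direct choice is cleaner exposition of the same thing).

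There is, however, a genuine imprecision in your treatment of the residual terms for $G\neq\emptyset$, and it is precisely the place where the proof either closes or does not. You propose to induct on $|F|$, split $\a^F=\a+(\a^F-\a)$, and suggest that the full-order residuals (those still carrying $D_F\nabla\psi_m$) could be ``absorbed on the left-hand side by Young's inequality.'' Absorption cannot work here: once a factor $D_{F_2}\a$ with $F_2\cap G\neq\emptyset$ appears, the locality $\Upsilon(F_2,\cdot)$ confines the coordinates indexed by $F_2\cap G$ to the unit cube $z+\cu$, so after the Cauchy--Schwarz on that cube the residual is controlled by
\[
\int_{(\Rd)^{F\setminus(G\setminus F_2)}}\E\Ll[\frac{1}{\rho_0|\cu_m|}\int_{\cu_m}\Ll|\int_{(\Rd)^{G\setminus F_2}}D_F\nabla\psi_m\Rr|^2\,\d\mu\Rr],
\]
which has strictly fewer integrated coordinates than $\Phi_G=\int_{(\Rd)^G}D_F\nabla\psi_m$. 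This is a \emph{different} quantity from the left-hand side, not a small multiple of it, so it cannot be absorbed. The right move is the one you only gesture at with ``lower order through $A'$-localization'': the correct inductive variable is $|F|+|G|$, not $|F|$. The paper makes this visible by using the Leibniz formula~\eqref{eq:Leibniz2} (each residual is of the explicit form $\tilde I(m,\rho_0,F_1,(G\cap F_1)\setminus F_2)$ with $|F_1|+|(G\cap F_1)\setminus F_2|<|F|+|G|$, since either $F_1\subsetneq F$ or, when $F_1=F$, locality forces $F_2\cap G\neq\emptyset$ and hence $|G\setminus F_2|<|G|$). Your use of~\eqref{eq:Leibniz1} plus the extra split of $\a^F$ obscures this structure and makes it look as if absorption might be needed, which led you astray. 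If you state the induction on $|F|+|G|$ and adopt the~\eqref{eq:Leibniz2} decomposition, your argument becomes the paper's and no absorption is required.
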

The proof of this proposition is based on an induction argument. The base case, for $F = G = \emptyset$, is the standard Dirichlet energy estimate \eqref{energy.basic} for $\psi_m$.  Although this is not necessary, for greater clarity we first present the easier proof of the special case with $G = \emptyset$ and arbitrary $F$ in Subsection~\ref{subsec:KeyBasis}. We then give a proof for the general case in Subsection~\ref{subsec:KeyGeneral}. This requires a more careful use of Fubini's lemma and some inclusion-exclusion argument.
The proof of Proposition~\ref{prop:SmoothFinite} is then carried out in  Subsection~\ref{sub:proof.prop}, by combining the results above according to the outline just discussed.

\subsection{Expansion in finite volume}\label{subsec:Expansion}
We prove Lemma~\ref{lem:Expansion} in this part.
\begin{proof}[Proof of Lemma~\ref{lem:Expansion}]
We start by decomposing the expression for $\Delta^\rho_m$ with respect to $\mu_\rho \mres \cu_{m+1}$, as the particles outside $\cu_{m+1}$ will not contribute to the perturbation
\begin{align*}
\Delta^\rho_m & \stackrel{\eqref{incr.rep.1}}{=} \E\Ll[ \frac{1}{\rho_0\vert \cu_m \vert} \int_{\cu_m} \nabla  \psi_m \cdot(\a - \a^\rho) \nabla \psi^\rho_m \, \d \mu \Rr]\\
&= e^{- \rho \vert \cu_{m+1} \vert} \sum_{j=0}^{\infty}  \frac{(\rho \vert \cu_{m+1} \vert)^j}{j!}\Ll(\fint_{(\cu_{m+1})^{\dbint{1,j}}} \E\Ll[ \frac{1}{\rho_0\vert \cu_m \vert} \int_{\cu_m} \nabla  \psi_m \cdot(\a - \a^{\dbint{1,j}}) \nabla \psi^{\dbint{1,j}}_m \, \d \mu \Rr]\Rr).
\end{align*}
We establish first that the series in the above expression converges absolutely. Indeed, using the Cauchy--Schwarz inequality and applying the bound~\eqref{energy.basic} on the Dirichlet energy, we can write
\begin{equation}\label{eq:CS}
\begin{split}
&\Ll\vert \fint_{(\cu_{m+1})^{\dbint{1,j}}} \E\Ll[ \frac{1}{\rho_0\vert \cu_m \vert} \int_{\cu_m} \nabla  \psi_m \cdot(\a - \a^{\dbint{1,j}}) \nabla \psi^{\dbint{1,j}}_m \, \d \mu \Rr] \Rr\vert  \\
& \leq \Ll(\fint_{(\cu_{m+1})^{\dbint{1,j}}} \E\Ll[ \frac{1}{\rho_0\vert \cu_m \vert} \int_{\cu_m} \vert ( \a - \a^{\dbint{1,j}}) \nabla \psi^{\dbint{1,j}}_m \vert^2 \, \d \mu \Rr]\Rr)^{\frac{1}{2}} \\
& \qquad \times \Ll(\fint_{(\cu_{m+1})^{\dbint{1,j}}} \E\Ll[ \frac{1}{\rho_0\vert \cu_m \vert} \int_{\cu_m} \vert \nabla  \psi_m \vert^2 \, \d \mu \Rr]\Rr)^{\frac{1}{2}} \\
& \leq \Ll(\fint_{(\cu_{m+1})^{\dbint{1,j}}} \E\Ll[ \frac{1}{\rho_0\vert \cu_m \vert} \int_{\cu_m} \vert ( \a - \a^{\dbint{1,j}}) \nabla \psi^{\dbint{1,j}}_m \vert^2 \, \d \mu \Rr]\Rr)^{\frac{1}{2}}.
\end{split}
\end{equation}
We introduce the notation
\begin{align*}
A_j := \fint_{(\cu_{m+1})^{\dbint{1,j}}} \E\Ll[ \frac{1}{\rho_0\vert \cu_m \vert} \int_{\cu_m} \vert ( \a - \a^{\dbint{1,j}}) \nabla \psi^{\dbint{1,j}}_m \vert^2 \, \d \mu \Rr].
\end{align*}
We can further split the integrals contributing to $A_j$ according to the subset $E \subset \dbint{1,j}$ of particles outside of $\cu_m$, leading to 
\begin{equation}\label{eq:AB}
\begin{split}
    A_j & = \sum_{E \subset \dbint{1,j}}  \Ll(\frac{\vert \cu_{m+1} \setminus \cu_{m} \vert}{\vert \cu_{m+1} \vert}\Rr)^{\vert E \vert} \Ll(\frac{\vert \cu_m \vert}{\vert \cu_{m+1} \vert}\Rr)^{j - \vert E \vert} B_{j,E}, \\
    B_{j,E} &:=  \fint_{(\cu_{m+1} \setminus \cu_m)^E } \fint_{(\cu_m)^{\dbint{1,j} \setminus E}}  \E\Ll[ \frac{1}{\rho_0 |\cu_m|} \int_{\cu_m} |(\a - \a^{\dbint{1,j}}) \nabla \psi^{\dbint{1,j}}_m  |^2 \, d\mu \Rr].    
\end{split}
\end{equation}
 Now for (Lebesgue) almost every $(x_i)_{i \in E} \in (\cu_{m+1} \setminus \cu_m)^E$ fixed, they can be treated together with $\mu\mres (\cu_m)^c$ as the ``outer environment'', and we apply the improved energy inequality~\eqref{energy.basic.slice} for $(\mu(\cu_m) + j - \vert E\vert)$ particles to obtain that
\begin{align*}
\fint_{(\cu_m)^{\dbint{1,j} \setminus E}} \E\Ll[ \frac{1}{\rho_0 |\cu_m|} \int_{\cu_m} |\nabla \psi^{\dbint{1,j}}_m  |^2  \, d\mu  \, \biggl| \, \mu(\cu_m) , \mu\mres (\cu_m)^c  \Rr] \leq \frac{ (\mu(\cu_m)  + j - \vert E \vert)}{\rho_0|\cu_m|}.
\end{align*}
From this expression and the uniform ellipticity assumption \eqref{a.elliptic}, one obtains that 
\begin{align*}
B_{j,E} \leq C \, \frac{ \rho_0 |\cu_m| + j - \vert E \vert}{\rho_0|\cu_m|},
\end{align*}
and thus
\begin{align*}
A_j  \leq C \sum_{ \ell = 0}^j \binom{j}{\ell} 3^{-d (j-\ell)} (1-3^{-d})^{\ell} \Ll(1+\frac{j-\ell}{ \rho_0 \vert \cu_m \vert} \Rr)  \leq C \Ll(1 + \frac{j}{\rho_0 \vert \cu_m \vert} \Rr).
\end{align*}
We use this estimate with  \eqref{eq:CS} to get that 
\begin{multline*}
     \sum_{j=0}^{\infty}  \frac{(\rho \vert \cu_{m+1} \vert)^j}{j!}\Ll\vert\fint_{(\cu_{m+1})^{\dbint{1,j}}} \E\Ll[ \frac{1}{\rho_0\vert \cu_m \vert} \int_{\cu_m} \nabla  \psi_m \cdot(\a - \a^{\dbint{1,j}}) \nabla \psi^{\dbint{1,j}}_m \, \d \mu \Rr]\Rr \vert \\
     \leq C\sum_{j = 0}^\infty \frac{(\rho |\cu_{m+1}|)^j}{j!} \Ll(1 + \frac{j}{\rho_0 \vert \cu_m \vert} \Rr)^{\frac{1}{2}} < \infty,
\end{multline*}
which implies that the series is absolutely convergent. Since $e^{- \rho \vert \cu_{m+1} \vert}$ is analytic with respect to $\rho$, their product $\Delta^\rho_m$ is also an analytic function of $\rho$. Then we expand $e^{- \rho \vert \cu_{m+1} \vert}$ into its Talyor series
\begin{align*}
\Delta^\rho_m = \sum_{l=0}^\infty \frac{(-\rho \vert \cu_{m+1} \vert)^l}{l!}&\sum_{j=0}^{\infty}  \frac{(\rho \vert \cu_{m+1} \vert)^j}{j!}\\
&\times \Ll(\fint_{(\cu_{m+1})^{\dbint{1,j}}} \E\Ll[ \frac{1}{\rho_0\vert \cu_m \vert} \int_{\cu_m} \nabla  \psi_m \cdot(\a - \a^{\dbint{1,j}}) \nabla \psi^{\dbint{1,j}}_m \, \d \mu \Rr]\Rr),
\end{align*}
and the absolute convergence allows us to reorganize the summations according to
\begin{align*}
\Delta^\rho_m = \sum_{k=0}^{\infty}\sum_{\substack{l,j \in \N, \\ l+j = k}}^\infty &\frac{(-1)^l(\rho \vert \cu_{m+1} \vert)^k}{l!j!}\\
&\times \Ll(\fint_{(\cu_{m+1})^{\dbint{1,j}}} \E\Ll[ \frac{1}{\rho_0\vert \cu_m \vert} \int_{\cu_m} \nabla  \psi_m \cdot(\a - \a^{\dbint{1,j}}) \nabla \psi^{\dbint{1,j}}_m \, \d \mu \Rr]\Rr).
\end{align*}
We also observe that the part $\fint_{(\cu_{m+1})^{\dbint{1,j}}} (\cdots)$ means the adding of $j$ particles in $\cu_{m+1}$, but the indices do not play a specific role. Thus we have 
\begin{multline*}
\fint_{(\cu_{m+1})^{\dbint{1,j}}} \E\Ll[ \frac{1}{\rho_0\vert \cu_m \vert} \int_{\cu_m} \nabla  \psi_m \cdot(\a - \a^{\dbint{1,j}}) \nabla \psi^{\dbint{1,j}}_m \, \d \mu \Rr] \\ = {\binom{k}{j}}^{-1} \sum_{E \subset \dbint{1,k}, \vert E \vert = j} \fint_{(\cu_{m+1})^{\dbint{1,k}}} \E\Ll[ \frac{1}{\rho_0\vert \cu_m \vert} \int_{\cu_m} \nabla  \psi_m \cdot(\a - \a^{E}) \nabla \psi^{E}_m \, \d \mu \Rr].
\end{multline*}
This leads to
\begin{align*}
\Delta^\rho_m &= \sum_{k=0}^{\infty}\sum_{\substack{l,j \in \N, \\ l+j = k}}^\infty \frac{(-1)^l \rho ^k}{k!}   \sum_{E \subset \dbint{1,k}, \vert E \vert = j} \Ll(\int_{(\cu_{m+1})^{\dbint{1,k}}} \E\Ll[ \frac{1}{\rho_0\vert \cu_m \vert} \int_{\cu_m} \nabla  \psi_m \cdot(\a - \a^{E}) \nabla \psi^{E}_m \, \d \mu \Rr]\Rr)  \\
&= \sum_{k=0}^{\infty} \sum_{E \subset \dbint{1,k}} \frac{(-1)^{k-\vert E \vert} \rho ^k}{k!}   \Ll(\int_{(\cu_{m+1})^{\dbint{1,k}}} \E\Ll[ \frac{1}{\rho_0\vert \cu_m \vert} \int_{\cu_m} \nabla  \psi_m \cdot(\a - \a^{E}) \nabla \psi^{E}_m \, \d \mu \Rr]\Rr) \\
&= \sum_{k=1}^{\infty} \frac{\rho ^k}{k!}   \Ll(\int_{(\cu_{m+1})^{\dbint{1,k}}} \E\Ll[ \frac{1}{\rho_0\vert \cu_m \vert} \int_{\cu_m} \nabla  \psi_m \cdot D_{\dbint{1,k}}((\a - \a^{\#}) \nabla \psi^{\#}_m) \, \d \mu \Rr]\Rr).
\end{align*}
From the second to the third line, we use the inclusion-exclusion formula \eqref{eq:Difference}. The term $k=0$ can be dropped since it vanishes. Finally, we can extend $\int_{(\cu_{m+1})^{\dbint{1,k}}}$ to $\int_{(\Rd)^{\dbint{1,k}}}$ and this is the desired result \eqref{eq:ExpansionModi}.
\end{proof}

\subsection{Key estimate for base case}\label{subsec:KeyBasis}
In this part, for clarity of exposition, we prove \eqref{eq:KeyEstimate} in the simpler case $G = \emptyset$. That is, we show that for every finite $F \subset \N_+$,
\begin{align}\label{eq:KeyEstimateBasis}
\int_{(\Rd)^{F}} \E\Ll[ \frac{1}{\rho_0 \vert \cu_m \vert} \int_{\cu_m} \Ll\vert  D_F\nabla \psi_m\Rr \vert^2 \, \d \mu \Rr] \leq C(\vert F \vert, 0) .
\end{align}

We start by introducing some notation (that will mostly be useful in the more general case treated in the next subsection). 
For $x,z \in \R^d$, we write $\Upsilon(E,z)$ to denote the indicator function
\begin{align}\label{eq:SymbolRestriction}
\Upsilon(E,z)(x) := \prod_{i \in E}\Ind{x_i \in z + \cu}.
\end{align}
We record a handful of elementary observations concerning $\Upsilon$: for every finite sets $E,F \subset \N_+$ and $z \in \Rd$, we have
\begin{align}\label{eq:UpsilonProduct}
\Upsilon(E, z)\Upsilon(F,z) = \Upsilon(E \cup F, z),
\end{align}
\begin{align}\label{eq:UpsilonInt}
\int_{(\Rd)^F} \Upsilon(E,z) \leq \int_{(\Rd)^{F \setminus E}} \Upsilon(E \setminus F, z),
\end{align}
and
\begin{align}\label{eq:DAEstimate}
\vert D_E \a(\mu, z)\vert \leq 2^{\vert E \vert} \Lambda \Upsilon(E,z).
\end{align}

\begin{proof}[Proof of \eqref{eq:KeyEstimateBasis}]  %\label{}
The case $F = \emptyset$ is the basic energy estimate in \eqref{energy.basic}, so we now assume that $F \neq \emptyset$. By Proposition~\ref{cor:var.formulation} for $\rho = 0$, we have for any finite $E_1, E_2 \subset \N_+$ that
\begin{multline}
\label{eq.KeyBasis1}
\fint_{(\cu_{m+1})^{E_1 \cup E_2}}\E\Ll[ \frac{1}{\rho_0 \vert \cu_m \vert} \int_{\cu_m} \nabla \psi^{E_2}_m  \cdot \a^{E_1} \nabla \psi^{E_1}_m \, \d \mu \Rr] \\
= \fint_{(\cu_{m+1})^{E_1 \cup E_2}} \E\Ll[ \frac{1}{\rho_0 \vert \cu_m \vert} \int_{\cu_m} \nabla \psi^{E_2}_m \cdot q \, \d \mu\Rr].
\end{multline}
We apply this with $E_1, E_2 \subset F$, thus we can extend as an average over particles in $F$ that
\begin{multline}\label{eq:KeyIdentity0}
\fint_{(\cu_{m+1})^{F}}\E\Ll[ \frac{1}{\rho_0 \vert \cu_m \vert} \int_{\cu_m} \nabla \psi^{E_2}_m  \cdot \a^{E_1} \nabla \psi^{E_1}_m \, \d \mu \Rr] \\
= \fint_{(\cu_{m+1})^{F}}\E\Ll[ \frac{1}{\rho_0 \vert \cu_m \vert} \int_{\cu_m} \nabla \psi^{E_2}_m \cdot q \, \d \mu \Rr].
\end{multline}
We do the linear combination of \eqref{eq:KeyIdentity0} over all the $E_1 \subset F$ and apply the inclusion-exclusion formula \eqref{eq:Difference} to obtain 
\begin{align}\label{eq:KeyIdentity}
\int_{(\cu_{m+1})^{F}}\E\Ll[ \frac{1}{\rho_0 \vert \cu_m \vert} \int_{\cu_m} \nabla \psi^{E_2}_m \cdot D_{F}\Ll(\a^\# \nabla \psi^\#_m\Rr)  \, \d \mu \Rr] = 0.
\end{align}
Here the sum on the {\rhs} is zero thanks to the inclusion-exclusion formula and $F \neq \emptyset$. We extend the integration $\int_{(\cu_{m+1})^{F}}$ to $\int_{(\Rd)^{F}}$ and then apply the linear combination of \eqref{eq:KeyIdentity} over all the $E_2 \subset F$ to obtain 
\begin{align}\label{eq:KeyIdentity2}
\int_{(\Rd)^{F}}\E\Ll[ \frac{1}{\rho_0 \vert \cu_m \vert} \int_{\cu_m} D_{F}(\nabla \psi_m) \cdot D_{F}\Ll(\a^\# \nabla \psi^\#_m\Rr)  \, \d \mu \Rr] = 0.
\end{align}
Now we use the Leibniz's formula in \eqref{eq:Leibniz1} and obtain that 
\begin{align*}
D_{F}\Ll(\a^\# \nabla \psi^\#_m\Rr) = \sum_{G \subset F} D_{F \setminus G} (\a^G) (D_G \nabla \psi_m).
\end{align*}
We put this back to \eqref{eq:KeyIdentity2}, and keep the term ${(D_F \nabla \psi_m) \cdot \a^F  (D_F \nabla \psi_m)}$ on the \lhs, while moving the other terms to the \rhs
\begin{multline*}
\int_{(\Rd)^{F}}\E\Ll[ \frac{1}{\rho_0 \vert \cu_m \vert} \int_{\cu_m} (D_F \nabla \psi_m) \cdot \a^F (D_F \nabla \psi_m)  \, \d \mu \Rr] \\
=  -\sum_{G \subsetneq F} \Ll(\int_{(\Rd)^{F}}\E\Ll[ \frac{1}{\rho_0 \vert \cu_m \vert} \int_{\cu_m} (D_F \nabla \psi_m) \cdot D_{F \setminus G} (\a^G) (D_G \nabla \psi_m)  \, \d \mu  \Rr]\Rr).
\end{multline*}
Using the Cauchy--Schwarz inequality and the triangle inequality, we obtain that 
\begin{multline*}
\Ll(\int_{(\Rd)^{F}}\E\Ll[ \frac{1}{\rho_0 \vert \cu_m \vert} \int_{\cu_m} \vert D_F \nabla \psi_m \vert^2  \, \d \mu  \Rr]\Rr)^{\frac{1}{2}} \\
\leq  \sum_{G \subsetneq F} \Ll(\int_{(\Rd)^{F}}\E\Ll[ \frac{1}{\rho_0 \vert \cu_m \vert} \int_{\cu_m} \vert D_{F \setminus G} (\a^G)\vert^2 \Ll\vert D_G \nabla \psi_m\Rr\vert^2  \, \d \mu \Rr]\Rr)^{\frac{1}{2}}.
\end{multline*}
Then we use Fubini's lemma to pass $\int_{(\Rd)^{F \setminus G}}$
\begin{multline*}
\int_{(\Rd)^{F}}\E\Ll[ \frac{1}{\rho_0 \vert \cu_m \vert} \int_{\cu_m} \vert D_{F \setminus G} (\a^G) \vert^2 \Ll\vert D_G \nabla \psi_m\Rr\vert^2  \, \d \mu \Rr]\\
 = \int_{(\Rd)^{G}}\E\Ll[ \frac{1}{\rho_0 \vert \cu_m \vert} \int_{\cu_m} \Ll( \int_{(\Rd)^{F \setminus G}}\vert D_{F \setminus G} (\a^G)\vert^2 \Rr) \Ll\vert D_G \nabla \psi_m\Rr\vert^2  \, \d \mu \Rr].
\end{multline*}
The last line uses the fact that $D_G \nabla \psi_m$ does not involve the particle in $F \setminus G$. A varied version of \eqref{eq:DAEstimate} and \eqref{eq:UpsilonInt} gives us that 
\begin{align*}
\int_{(\Rd)^{F \setminus G}}\vert D_{F \setminus G} (\a^G)\vert^2 \leq 4^{\vert F \setminus G\vert} \Lambda^2 \int_{(\Rd)^{F \setminus G}} \Upsilon(F \setminus G, \cdot) \leq 4^{\vert F \setminus G\vert} \Lambda^2. 
\end{align*}
Therefore, we obtain an estimate that 
\begin{multline}\label{eq:KeyBasisRecurrence}
\Ll(\int_{(\Rd)^{F}}\E\Ll[ \frac{1}{\rho_0 \vert \cu_m \vert} \int_{\cu_m} \vert D_F \nabla \psi_m \vert^2  \, \d \mu \Rr]\Rr)^{\frac{1}{2}} \\
\leq  \sum_{G \subsetneq F} \Ll(4^{\vert F \setminus G\vert}\Lambda^2 \int_{(\Rd)^{G}}\E\Ll[ \frac{1}{\rho_0 \vert \cu_m \vert} \int_{\cu_m} \Ll\vert D_G \nabla \psi_m\Rr\vert^2  \, \d \mu \Rr]\Rr)^{\frac{1}{2}}.
\end{multline}
This estimate allows us to justify the induction argument. Indeed, the case $\vert F \vert = 0$ is the Dirichlet energy estimate. Suppose \eqref{eq:KeyEstimateBasis} is valid for $\vert F \vert = n$, then for $\vert F \vert = n+1$, we apply \eqref{eq:KeyBasisRecurrence}. As the quantity on the {\rhs} only relies on $G \subsetneq F$, which implies $\vert G \vert \leq n$, we can invoke \eqref{eq:KeyEstimateBasis} for lower order. This completes the proof of~\eqref{eq:KeyEstimateBasis}.
\end{proof} 

\subsection{Key estimate for the general case}\label{subsec:KeyGeneral}
In this part, we now treat the general case of \eqref{eq:KeyEstimate}. 
\begin{proof}[Proof of Proposition~\ref{prop:KeyEstimate}]
We decompose the proof into three steps and we suppose $F \neq \emptyset$.

\textit{Step 1: Expansion.} We start once again from \eqref{eq:KeyIdentity0}, and apply a ``doubling variables trick''. For $G \subset F \subset \N_+$, we add another set $G' \subset \N_+ \setminus F$ such that $\vert G' \vert = \vert G \vert$, and consider \eqref{eq:KeyIdentity0} for some $E_1 \subset F$ and $E'_2 \subset (F \setminus G) \sqcup G'$. Then $(E_1 \cup E'_2) \subset (F \sqcup G')$ and \eqref{eq:KeyIdentity0} becomes 
\begin{multline*}
\fint_{(\cu_{m+1})^{F \sqcup G'}}\E\Ll[ \frac{1}{\rho_0 \vert \cu_m \vert} \int_{\cu_m} \nabla \psi^{E'_2}_m  \cdot \a^{E_1} \nabla \psi^{E_1}_m \, \d \mu \Rr] \\
= \fint_{(\cu_{m+1})^{F \sqcup G'}}\E\Ll[ \frac{1}{\rho_0 \vert \cu_m \vert} \int_{\cu_m} \nabla \psi^{E'_2}_m \cdot q \, \d \mu \Rr].
\end{multline*}
 Then we apply the inclusion-exclusion formula \eqref{eq:Difference} over all $E_1 \subset F$ and obtain that 
\begin{align*}
\int_{(\cu_{m+1})^{F \sqcup G'}}\E\Ll[ \frac{1}{\rho_0 \vert \cu_m \vert} \int_{\cu_m} \nabla \psi^{E'_2}_m \cdot D_{F}\Ll(\a^\# \nabla \psi^\#_m\Rr)  \, \d \mu \Rr] = 0.
\end{align*}
From this line, we can extend $\int_{(\cu_{m+1})^{F \sqcup G'}}$ to $\int_{(\Rd)^{F \sqcup G'}}$. We then apply the inclusion-exclusion formula \eqref{eq:Difference} over all $E'_2 \subset (F \setminus G) \sqcup G'$ and obtain 
\begin{align*}
\int_{(\Rd)^{F \sqcup G'}}\E\Ll[ \frac{1}{\rho_0 \vert \cu_m \vert} \int_{\cu_m} (D_{(F \setminus G) \sqcup G'} \nabla \psi_m) \cdot D_{F}\Ll(\a^\# \nabla \psi^\#_m\Rr)  \, \d \mu \Rr] = 0.
\end{align*}
Notice that the particles in $G'$ only act on the term $(D_{(F \setminus G) \sqcup G'} \nabla \psi_m)$, we can pass $\int_{(\Rd)^{G'}}$ to the interior and this equation becomes 
\begin{align*}
\int_{(\Rd)^{F}}\E\Ll[ \frac{1}{\rho_0 \vert \cu_m \vert} \int_{\cu_m} \Ll( \int_{(\Rd)^{G'}} D_{(F \setminus G) \sqcup G'} \nabla \psi_m\Rr) \cdot D_{F}\Ll(\a^\# \nabla \psi^\#_m\Rr)  \, \d \mu\Rr] = 0.
\end{align*}
Up to a relabelling of the particles, we can write
\begin{align*}
 \int_{(\Rd)^{G'}} D_{(F \setminus G) \sqcup G'} \nabla \psi_m =  \int_{(\Rd)^{G}} D_{F} \nabla \psi_m,
\end{align*}
and obtain a counter-part of \eqref{eq:KeyIdentity2} that 
\begin{align*}
\int_{(\Rd)^{F}}\E\Ll[ \frac{1}{\rho_0 \vert \cu_m \vert} \int_{\cu_m} \Ll( \int_{(\Rd)^{G}} D_{F} \nabla \psi_m \Rr) \cdot D_{F}\Ll(\a^\# \nabla \psi^\#_m\Rr)  \, \d \mu\Rr] = 0.
\end{align*}

Like \eqref{eq:KeyIdentity2}, we do an expansion for this identity for the term $D_{F}\Ll(\a^\# \nabla \psi^\#_m\Rr)$, but we need to treat it more carefully. We apply \eqref{eq:Leibniz2} on ${D_{F}\Ll(\a^\# \nabla \psi^\#_m\Rr)}$ and obtain that 
\begin{align*}
D_{F}\Ll(\a^\# \nabla \psi^\#_m\Rr) = \sum_{F_1 \cup F_2 = F}D_{F_2}(\a) (D_{F_1}\nabla \psi_m).
\end{align*}
We keep the term
\begin{align*}
 \{F_1 \cup F_2 = F\} \cap \{ F_2 \subset (F \setminus G)\} \cap \{ F_1 = F \},
\end{align*}
on the \lhs, while putting the other terms
\begin{align*}
\{ F_1 \cup F_2 = F\} \cap \Ll(\{F_2 \cap G \neq \emptyset\} \cup \{ F_1 \subsetneq F \}\Rr),
\end{align*}
on the \rhs . We also notice \eqref{eq:Telescope} that 
\begin{align*}
\sum_{ F_2 \subset (F \setminus G)} D_{F_2}(\a) = \a^{F \setminus G},
\end{align*}
so we obtain that 
\begin{multline}\label{eq:KeyIdentityGeneral}
\int_{(\Rd)^{F}}\E\Ll[ \frac{1}{\rho_0 \vert \cu_m \vert} \int_{\cu_m} \Ll( \int_{(\Rd)^{G}} D_{F} \nabla \psi_m \Rr) \cdot \a^{F \setminus G} (D_F \nabla \psi_m)  \, \d \mu \Rr] \\
=  \sum_{\substack{F_1 \cup F_2 = F\\
F_2 \cap G \neq \emptyset, \text{ or }  F_1 \subsetneq F}} -\Ll(\int_{(\Rd)^{F}}\E\Ll[ \frac{1}{\rho_0 \vert \cu_m \vert} \int_{\cu_m} \Ll( \int_{(\Rd)^{G}} D_{F} \nabla \psi_m \Rr) \cdot D_{F_2} (\a) (D_{F_1} \nabla \psi_m)  \, \d \mu \Rr]\Rr).
\end{multline}
Because $\a^{F \setminus G}, \int_{(\Rd)^{G}} D_F \psi_m$ and $ \d \mu $ do not depend on the particles indexed by $G$, we can apply Fubini's lemma to pass $\int_{(\Rd)^{G}}$ to interior, thus the \lhs\ of \eqref{eq:KeyIdentityGeneral}
becomes 
\begin{align*}
&\int_{(\Rd)^{F}}\E\Ll[ \frac{1}{\rho_0 \vert \cu_m \vert} \int_{\cu_m} \Ll(\int_{(\Rd)^{G}} D_F\nabla \psi_m \Rr) \cdot \a^{F \setminus G} (D_F \nabla \psi_m)  \, \d \mu \Rr] \\
&= \int_{(\Rd)^{F \setminus G}}\E\Ll[ \frac{1}{\rho_0 \vert \cu_m \vert} \int_{\cu_m} \Ll(\int_{(\Rd)^{G}} D_F\nabla \psi_m \Rr) \cdot \a^{F \setminus G} \Ll(\int_{(\Rd)^{G}} D_F\nabla \psi_m \Rr)  \, \d \mu \Rr]\\
&\geq \int_{(\Rd)^{F \setminus G}}\E\Ll[ \frac{1}{\rho_0 \vert \cu_m \vert} \int_{\cu_m} \Ll\vert \int_{(\Rd)^{G}} D_F\nabla \psi_m \Rr \vert^2  \, \d \mu \Rr].
\end{align*}
For the \rhs , we argue similarly by the Cauchy--Schwarz inequality and the triangle inequality to obtain that 
\begin{multline}\label{eq:KeyGeneralInter}
\Ll(\int_{(\Rd)^{F \setminus G}}\E\Ll[ \frac{1}{\rho_0 \vert \cu_m \vert} \int_{\cu_m} \Ll\vert \int_{(\Rd)^{G}} D_F\nabla \psi_m \Rr \vert^2  \, \d \mu \Rr]\Rr)^{\frac{1}{2}} \\
\leq  \sum_{\substack{F_1 \cup F_2 = F\\
F_2 \cap G \neq \emptyset, \text{ or }  F_1 \subsetneq F}} \Ll(\int_{(\Rd)^{F \setminus G}}\E\Ll[ \frac{1}{\rho_0 \vert \cu_m \vert} \int_{\cu_m} \Ll\vert \int_{(\Rd)^{G}}D_{F_2} (\a) (D_{F_1} \nabla \psi_m) \Rr \vert^2  \, \d \mu\Rr]\Rr)^{\frac{1}{2}}.
\end{multline}

\textit{Step 2: Simplification and recurrence inequality.} The final goal is to get a recurrence like \eqref{eq:KeyBasisRecurrence}, but \eqref{eq:KeyGeneralInter} still needs some further simplification. We focus on the term 
\begin{align*}
\int_{(\Rd)^{G}}D_{F_2} (\a) (D_{F_1} \nabla \psi_m).
\end{align*}
Since $F_1 \cup F_2 = F$, we use the disjoint decomposition that 
\begin{align*}
F = (F_2 \setminus F_1) \sqcup (F_1 \setminus F_2) \sqcup (F_2 \cap F_1),
\end{align*}
which also induces the decomposition of $G$
\begin{align*}
G = ((G \cap F_2) \setminus F_1) \sqcup ((G \cap F_1) \setminus F_2) \sqcup (G \cap F_2 \cap F_1).
\end{align*}
Thus, we can decompose 
\begin{align*}
\int_{(\Rd)^{G}} = \int_{(\Rd)^{(G \cap F_2) \setminus F_1}} \int_{(\Rd)^{(G \cap F_1) \setminus F_2}} \int_{(\Rd)^{G \cap F_2 \cap F_1}},
\end{align*}
and pass them respectively to the proper term
\begin{multline*}
\int_{(\Rd)^{G}}D_{F_2} (\a) (D_{F_1} \nabla \psi_m)  \\
=\int_{(\Rd)^{G \cap F_2 \cap F_1}}\Ll( \Ll(\int_{(\Rd)^{(G \cap F_2) \setminus F_1}} D_{F_2} (\a)\Rr) \Ll( \int_{(\Rd)^{(G \cap F_1) \setminus F_2}} D_{F_1} \nabla \psi_m\Rr)\Rr).
\end{multline*}
Let $z \in \supp(\mu)$ be the particle at which the gradient is computed, then we use the notation \eqref{eq:SymbolRestriction} and the estimate \eqref{eq:DAEstimate} to give its bound
\begin{align*}
&\Ll\vert \int_{(\Rd)^{G}}D_{F_2} (\a) D_{F_1} \nabla \psi_m  \Rr\vert^2(z) \\
& \leq \Ll\vert \int_{(\Rd)^{G \cap F_2 \cap F_1}}\Ll( \Ll(\int_{(\Rd)^{(G \cap F_2) \setminus F_1}} \vert D_{F_2} (\a) \vert \Rr) \Ll\vert \int_{(\Rd)^{(G \cap F_1) \setminus F_2}} D_{F_1} \nabla \psi_m\Rr\vert\Rr)\Rr\vert^2(z) \\
& \leq 4^{\vert F_2\vert}\Lambda^2 \Ll\vert \int_{(\Rd)^{G \cap F_2 \cap F_1}}\Ll( \Ll(\int_{(\Rd)^{(G \cap F_2) \setminus F_1}} \Upsilon(F_2, z)\Rr) \Ll\vert \int_{(\Rd)^{(G \cap F_1) \setminus F_2}} D_{F_1} \nabla \psi_m\Rr\vert\Rr)\Rr\vert^2(z).
\end{align*}
Next, we use the property that $\Upsilon(F_2, z)$ requires all the particles in $F_2$ to live in $z + \cu$ 
\begin{align*}
&\Ll\vert \int_{(\Rd)^{G \cap F_2 \cap F_1}}\Ll( \Ll(\int_{(\Rd)^{(G \cap F_2) \setminus F_1}} \Upsilon(F_2, z)\Rr) \Ll\vert \int_{(\Rd)^{(G \cap F_1) \setminus F_2}} D_{F_1} \nabla \psi_m\Rr\vert\Rr)\Rr\vert^2(z)\\
&=  \Ll\vert \int_{(z + \cu)^{G \cap F_2 \cap F_1}}\Ll( \Ll(\int_{(\Rd)^{(G \cap F_2) \setminus F_1}} \Upsilon(F_2, \cdot)\Rr) \Ll\vert \int_{(\Rd)^{(G \cap F_1) \setminus F_2}} D_{F_1} \nabla \psi_m\Rr\vert\Rr)\Rr\vert^2(z) \\
&\leq \int_{(z + \cu)^{G \cap F_2 \cap F_1}}\Ll( \Ll(\int_{(\Rd)^{(G \cap F_2) \setminus F_1}} \Upsilon(F_2, \cdot)\Rr)^2 \Ll\vert \int_{(\Rd)^{(G \cap F_1) \setminus F_2}} D_{F_1} \nabla \psi_m\Rr\vert^2\Rr)(z)\\
&= \int_{(\Rd)^{G \cap F_2 \cap F_1}}\Ll( \Ll(\int_{(\Rd)^{(G \cap F_2) \setminus F_1}} \Upsilon(F_2, \cdot)\Rr)^2 \Ll\vert \int_{(\Rd)^{(G \cap F_1) \setminus F_2}} D_{F_1} \nabla \psi_m\Rr\vert^2\Rr)(z).
\end{align*}
From the second line to the third line, we use the Cauchy--Schwarz inequality, and from the third line to the fourth line, we reapply the property of $\Upsilon(F_2, z)$. So in this step we gain a small factor for Cauchy--Schwarz inequality. Now, we use the property \eqref{eq:UpsilonInt}
\begin{align*}
\int_{(\Rd)^{(G \cap F_2) \setminus F_1}} \Upsilon(F_2, z) \leq  \Upsilon(F_2 \setminus ((G \cap F_2) \setminus F_1), z). 
\end{align*}
We put all these estimates back to the {\rhs} of \eqref{eq:KeyGeneralInter} to obtain that  
\begin{equation}\label{eq:KeyGeneralInter2}
\begin{split}
&\int_{(\Rd)^{F \setminus G}}\E\Ll[ \frac{1}{\rho_0 \vert \cu_m \vert} \int_{\cu_m} \Ll\vert \int_{(\Rd)^{G}}D_{F_2} (\a) (D_{F_1} \nabla \psi_m) \Rr \vert^2  \, \d \mu\Rr] \\
&\leq 4^{\vert F_2\vert} \Lambda^2 
\int_{(\Rd)^{F \setminus G}}\E\Ll[ \frac{1}{\rho_0 \vert \cu_m \vert} \int_{\cu_m} \Rr. \\
& \qquad \Ll. \int_{(\Rd)^{G \cap F_2 \cap F_1}}\Ll(\Upsilon(F_2 \setminus ((G \cap F_2) \setminus F_1), \cdot)
\Ll\vert \int_{(\Rd)^{(G \cap F_1) \setminus F_2}} D_{F_1} \nabla \psi_m\Rr\vert^2\Rr)   \, \d \mu\Rr].
\end{split}
\end{equation}
In this integral, we can continue some simplification with $\Upsilon(F_2 \setminus ((G \cap F_2) \setminus F_1), \cdot)$. We have the following disjoint union  
\begin{equation*}
F \setminus G = ((F_2 \cap F_1) \setminus G) \sqcup ((F_2 \setminus F_1) \setminus G) \sqcup ((F_1 \setminus F_2) \setminus G),
\end{equation*}
which implies that 
\begin{align}\label{eq:KeyGeneralDecom}
\int_{(\Rd)^{F \setminus G}} = \int_{(\Rd)^{(F_2 \cap F_1) \setminus G}} \int_{(\Rd)^{(F_2 \setminus F_1) \setminus G}} \int_{(\Rd)^{(F_1 \setminus F_2) \setminus G}}.
\end{align}
Because $\int_{(\Rd)^{(G \cap F_1) \setminus F_2}} D_{F_1} \nabla \psi_m$ only involves a subset of particles in $F_1$, which is disjoint from $(F_2 \setminus F_1) \setminus G$, we use Fubini's lemma to pass the integration of $\int_{(\Rd)^{(F_2 \setminus F_1) \setminus G}}$ to the inside
\begin{align*}
&\int_{(\Rd)^{(F_2 \setminus F_1) \setminus G}}\E\Ll[ \frac{1}{\rho_0 \vert \cu_m \vert} \int_{\cu_m} \int_{(\Rd)^{G \cap F_2 \cap F_1}}\Rr. \\
& \qquad \qquad \Ll. \Ll(\Upsilon(F_2 \setminus ((G \cap F_2) \setminus F_1), \cdot)
\Ll\vert \int_{(\Rd)^{(G \cap F_1) \setminus F_2}} D_{F_1} \nabla \psi_m\Rr\vert^2\Rr)   \, \d \mu\Rr] \\
& = \E\Ll[ \frac{1}{\rho_0 \vert \cu_m \vert} \int_{\cu_m}  \int_{(\Rd)^{G \cap F_2 \cap F_1}} 
\Rr.\\
& \qquad  \Ll. \Ll(\Ll(\int_{(\Rd)^{(F_2 \setminus F_1) \setminus G}} \Upsilon(F_2 \setminus ((G \cap F_2) \setminus F_1), \cdot)\Rr) \Ll\vert \int_{(\Rd)^{(G \cap F_1) \setminus F_2}} D_{F_1} \nabla \psi_m\Rr\vert^2\Rr)    \, \d \mu\Rr]\\
&= \E\Ll[ \frac{1}{\rho_0 \vert \cu_m \vert} \int_{\cu_m}  \int_{(\Rd)^{G \cap F_2 \cap F_1}} \Ll( \Upsilon(F_2 \cap F_1, \cdot)
\Ll\vert \int_{(\Rd)^{(G \cap F_1) \setminus F_2}} D_{F_1} \nabla \psi_m\Rr\vert^2\Rr)   \, \d \mu\Rr] \\ 
&= \int_{(\Rd)^{G \cap F_2 \cap F_1}} \E\Ll[ \frac{1}{\rho_0 \vert \cu_m \vert} \int_{\cu_m}   \Ll( \Upsilon(F_2 \cap F_1, \cdot)
\Ll\vert \int_{(\Rd)^{(G \cap F_1) \setminus F_2}} D_{F_1} \nabla \psi_m\Rr\vert^2\Rr)   \, \d \mu\Rr]. 
\end{align*}
From the second line to the third line, we used \eqref{eq:UpsilonInt} and the decomposition
\begin{equation*}
F_2 \setminus ((G \cap F_2) \setminus F_1) = (F_2 \cap F_1) \sqcup ((F_2 \setminus F_1) \setminus G).
\end{equation*}
See the Venn diagram in Figure~\ref{fig:Venn} to help check this equation. From the third line to the fourth line, we put the integral $\int_{(\Rd)^{G \cap F_2 \cap F_1}}$ outside the expectation using Fubini's lemma. We combine this integral together with the rest of integrals in \eqref{eq:KeyGeneralDecom} and we observe that 
\begin{align}\label{eq:KeyGeneralContract}
\int_{(\Rd)^{(F_2 \cap F_1) \setminus G}}\int_{(\Rd)^{(F_1 \setminus F_2) \setminus G}}\int_{(\Rd)^{G \cap F_2 \cap F_1}} = \int_{(\Rd)^{F_1 \setminus ((G \cap F_1) \setminus F_2)}}.
\end{align} 
because of the identity (see Figure~\ref{fig:Venn} to help check this equation)
\begin{align*}
((F_2 \cap F_1) \setminus G) \sqcup ((F_1 \setminus F_2) \setminus G) \sqcup (G \cap F_2 \cap F_1) = F_1 \setminus ((G \cap F_1) \setminus F_2).
\end{align*}

\begin{figure}[h!]
\centering
\includegraphics[scale=0.6]{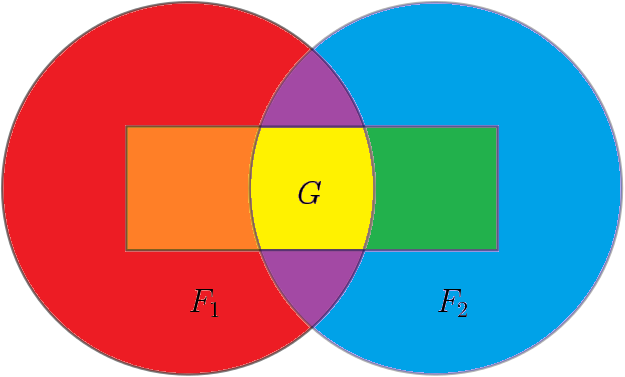}
\caption{A Venn diagram for illustration. The disk on the left represents $F_1$ and the disk on the right represents $F_2$. The rectangle is $G$. We use different colors for the partition of $F = F_1 \cup F_2$, and it has the following bijections.
\begin{align*}
F_1 \cap F_2 &= \{\text{yellow, purple}\},\\
G \cap F_1 \cap F_2 &= \{\text{yellow}\},\\
(F_1 \cap F_2) \setminus G &= \{\text{purple}\},\\
(F_1 \setminus F_2) \setminus G &= \{\text{red}\},\\
(F_2 \setminus F_1) \setminus G &= \{\text{blue}\},\\
F_1 \setminus ((G \cap F_1) \setminus F_2) &= \{\text{red, yellow, purple}\},\\
F_2 \setminus ((G \cap F_2) \setminus F_1) &= \{\text{blue, yellow, purple}\}.\\ 
\end{align*}
}
\label{fig:Venn}
\end{figure}
Therefore, one term in the {\rhs} of \eqref{eq:KeyGeneralInter} can be bounded 
\begin{multline*}
\int_{(\Rd)^{F \setminus G}}\E\Ll[ \frac{1}{\rho_0 \vert \cu_m \vert} \int_{\cu_m} \Ll\vert \int_{(\Rd)^{G}}D_{F_2} (\a) (D_{F_1} \nabla \psi_m) \Rr \vert^2  \, \d \mu\Rr] \\
\leq \int_{(\Rd)^{F_1 \setminus ((G \cap F_1) \setminus F_2)}} \E\Ll[ \frac{1}{\rho_0 \vert \cu_m \vert} \int_{\cu_m}   \Ll( \Upsilon(F_2 \cap F_1, \cdot)
\Ll\vert \int_{(\Rd)^{(G \cap F_1) \setminus F_2}} D_{F_1} \nabla \psi_m\Rr\vert^2\Rr)   \, \d \mu \Rr].
\end{multline*}

We can further drop out the indicator $\Upsilon(F_2 \cap F_1, \cdot)$, and put it back to \eqref{eq:KeyGeneralInter} to obtain that 
\begin{multline}\label{eq:KeyGeneralRecurrence}
\Ll(\int_{(\Rd)^{F \setminus G}}\E\Ll[ \frac{1}{\rho_0 \vert \cu_m \vert} \int_{\cu_m} \Ll\vert \int_{(\cu_m)^{G}} (D_F\nabla \psi_m)\Rr \vert^2  \, \d \mu \Rr]\Rr)^{\frac{1}{2}} \\
\leq  \sum_{\substack{F_1 \cup F_2 = F\\
F_2 \cap G \neq \emptyset, \text{ or }  F_1 \subsetneq F}} \Ll(4^{\vert F_2 \vert}\Lambda^2 \int_{(\Rd)^{F_1 \setminus ((G \cap F_1) \setminus F_2)}} \E\Ll[ \frac{1}{\rho_0 \vert \cu_m \vert} \int_{\cu_m} \Ll\vert \int_{(\Rd)^{(G \cap F_1) \setminus F_2}} D_{F_1} \nabla \psi_m\Rr\vert^2 \, \d \mu \Rr]\Rr)^{\frac{1}{2}}.
\end{multline}

\textit{Step 3: Induction argument.}  Equation \eqref{eq:KeyGeneralRecurrence} is the analogue of \eqref{eq:KeyBasisRecurrence} for the general case. In this step, we describe the induction argument, which consists in obtaining a bound for the constant $C(i,j)$ in \eqref{eq:KeyEstimate} in terms of a linear combination of the $C(i',j')$ with $i' \le i$, $j' \le j$, and $i' + j' <  i + j$. An illustration is in Figure~\ref{fig:recurrence}.

We denote by $\tilde{I}(m, \rho_0, F, G)$ the {\lhs} of \eqref{eq:KeyGeneralRecurrence}. This equation can be rewritten as
\begin{align}
\label{e.induction.inequality}
\tilde{I}(m, \rho_0, F, G) \leq \sum_{\substack{F_1 \cup F_2 = F\\
F_2 \cap G \neq \emptyset, \text{ or }  F_1 \subsetneq F}} 2^{\vert F_2 \vert}\Lambda\,   \tilde{I}(m, \rho_0, F_1, (G \cap F_1) \setminus F_2).
\end{align}
For sets $F_1, F_2$ as in the summands above, we clearly have 
\begin{equation*}  %\label{e.}
|F_1| + |(G \cap F_1) \setminus F_2| \le |F| + |G|.
\end{equation*}
In fact, the inequality is always strict. Indeed, a possible case of equality would require that $F_1 = F$, since $F_1 \subset F$ and $(G \cap F_1) \setminus F_2 \subset G$. But if $F_1 = F$, then we must have $F_2 \cap G \neq \emptyset$, and thus 
\begin{equation*}  %\label{e.}
\vert (G \cap F_1) \setminus F_2 \vert = \vert G  \setminus F_2 \vert \leq \vert G\vert - 1.
\end{equation*}
So all the summands on the right side of \eqref{e.induction.inequality} are such that 
\begin{equation*}  %\label{e.}
|F_1| + |(G \cap F_1) \setminus F_2| < |F| + |G|.
\end{equation*}
The induction argument is then clear: the case when $F = G = \emptyset$ is the basic Dirichlet energy estimate. Next, assuming the boundedness of $\tilde{I}(m, \rho_0, F, G) $ for $|F| + |G| \le k$, we can obtain the result for $|F| + |G| = k+1$ by an application of \eqref{e.induction.inequality}.
This completes the proof of Proposition~\ref{prop:KeyEstimate}.
\end{proof}
\begin{figure}[ht]
\centering
\includegraphics[scale=0.6]{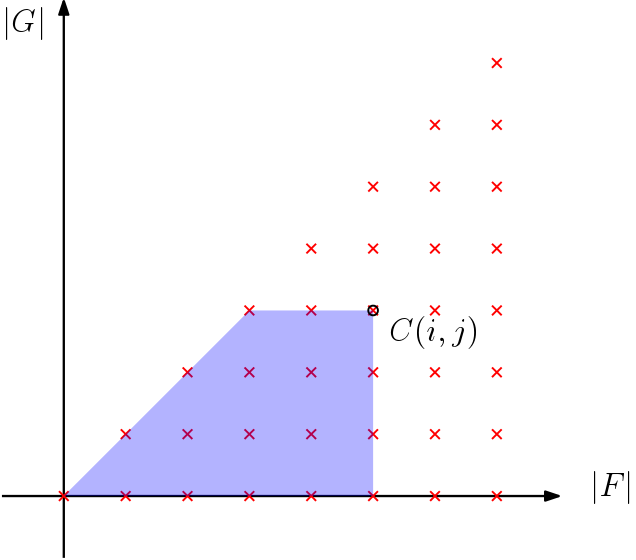}
\caption{An illustration of the recurrence argument. The constant $C(i,j)$ can be bounded by a linear combination of the $C(i',j')$ with $j' \leq j$, $i' \leq i$, and $i' + j' < i + j$.}\label{fig:recurrence}
\end{figure}

\subsection{Smoothness in finite volume}
\label{sub:proof.prop}

We can now combine Lemma~\ref{lem:Expansion} and Proposition~\ref{prop:KeyEstimate} to complete the proof of Proposition~\ref{prop:SmoothFinite}.

\begin{proof}[Proof of Proposition~\ref{prop:SmoothFinite}]
We decompose the proof into three steps.

\textit{Step 1: Decomposition and expansion.}
As stated in Subsection~\ref{subsec:Strategy}, we first expand $\Delta^\rho_m$ with respect to $\rho$ as in \eqref{eq:ExpansionModi} and use the Leibniz formula \eqref{eq:Leibniz2} to get that 
\begin{align}\label{eq:ExpansionCopy}
\Delta^\rho_m(\rho_0)  &= \sum_{k=1}^{\infty} \frac{\rho^k}{k!} c_{k,m}(\rho_0) = \sum_{k=1}^{\infty} \frac{\rho^k}{k!} \sum_{E \cup F = \dbint{1,k}} I(m,\rho_0,E,F),
\end{align}
with $c_{k,m}$ defined in \eqref{e.def.ckm} and $I(m,\rho_0,E,F)$ defined in \eqref{eq:defI}. Lemma~\ref{lem:Expansion} ensures that this series converges, and that $\rho \mapsto \Delta^\rho_m$ is analytic. In the next step, we aim to give a bound to $I(m, \rho_0, E, F)$ which is uniform with respect to $m$ and $\rho_0$.

\textit{Step 2: Reduction of $I(m, \rho_0, E, F)$.} Recall the expression of $I(m, \rho_0, E, F)$ in \eqref{eq:defI}, we use Fubini's lemma and pass the integration $\dbint{1,k} \setminus E$ inside. Notice that we have $\dbint{1,k} \setminus E = F \setminus E$ thanks to $ E \cup F = \dbint{1,k}$. Since the particles in the set $F \setminus E$ do not appear in $D_E (\a - \a^\# )$, we have 
\begin{align*}
I(m,\rho_0,E,F)
= \int_{(\Rd)^{E}} \E\Ll[ \frac{1}{\rho_0 \vert \cu_m \vert} \int_{\cu_m} \nabla \psi_m \cdot  D_E (\a - \a^\# )  \Ll(\int_{(\Rd)^{F \setminus E}} D_{F} \nabla \psi_m\Rr) \, \d \mu \Rr].
\end{align*}
We apply the Cauchy--Schwarz inequality and obtain that 
\begin{align*}
\vert I(m,\rho_0,E,F) \vert &\leq \Ll(\int_{(\Rd)^{E}} \E\Ll[ \frac{1}{\rho_0 \vert \cu_m \vert} \int_{\cu_m}   \vert D_E (\a - \a^\# )\vert  \vert \nabla \psi_m\vert^2  \, \d \mu \Rr]\Rr)^{\frac{1}{2}} \\
& \quad \times \Ll(\int_{(\Rd)^{E}} \E\Ll[ \frac{1}{\rho_0 \vert \cu_m \vert} \int_{\cu_m}  \vert D_E (\a - \a^\# ) \vert  \Ll\vert\int_{(\Rd)^{F \setminus E}} D_F\nabla \psi_m\Rr\vert^2 \, \d \mu \Rr]\Rr)^{\frac{1}{2}}.
\end{align*}
The first term is easy to treat since we can use Fubini's lemma that 
\begin{align*}
&\int_{(\Rd)^{E}} \E\Ll[ \frac{1}{\rho_0 \vert \cu_m \vert} \int_{\cu_m}  \vert D_E (\a - \a^\# ) \vert  \vert\nabla \psi_m\vert^2  \, \d \mu \Rr] \\
& =  \E\Ll[ \frac{1}{\rho_0 \vert \cu_m \vert} \int_{\cu_m}  \Ll(\int_{(\Rd)^{E}} \vert D_E (\a - \a^\# )\vert\Rr)   \vert\nabla \psi_m\vert^2  \, \d \mu \Rr] \\
& \leq 2^{\vert E \vert}\Lambda.
\end{align*}
In the last step, we apply \eqref{eq:DAEstimate} and \eqref{eq:UpsilonInt} that 
\begin{align*}
\int_{(\Rd)^{E}} \vert D_E (\a - \a^\# )\vert \leq 2^{\vert E \vert}\Lambda\int_{(\Rd)^{E}}\Upsilon(E, \cdot) \leq 2^{\vert E \vert} \Lambda.
\end{align*}
For the second term, we use  the decomposition 
\begin{align*}
D_E = D_{E \setminus F} \circ D_{E \cap F}, \qquad \int_{(\Rd)^E} = \int_{(\Rd)^{E \setminus F}} \int_{(\Rd)^{E \cap F}},
\end{align*}
and pass the integration $\int_{(\Rd)^{E \setminus F}}$ inside
\begin{align*}
&\int_{(\Rd)^{E}} \E\Ll[ \frac{1}{\rho_0 \vert \cu_m \vert} \int_{\cu_m}  \vert D_E (\a - \a^\# ) \vert  \Ll\vert\int_{(\Rd)^{F \setminus E}} D_F\nabla \psi_m\Rr\vert^2 \, \d \mu \Rr] \\
& = \int_{(\Rd)^{E \cap F}} \E\Ll[ \frac{1}{\rho_0 \vert \cu_m \vert} \int_{\cu_m} \Ll(\int_{(\Rd)^{E \setminus F}}   \vert D_E (\a - \a^\# ) \vert\Rr)  \Ll\vert \int_{(\Rd)^{F \setminus E}} D_F\nabla \psi_m\Rr \vert^2 \, \d \mu \Rr]. 
\end{align*}
We apply once again the estimate \eqref{eq:UpsilonInt} and \eqref{eq:DAEstimate} that 
\begin{align*}
\int_{(\Rd)^{E \setminus F}}   \vert D_E (\a - \a^\# ) \vert \leq 2^{\vert E \vert} \int_{(\Rd)^{E \setminus F}} \Upsilon(E,\cdot) \leq  2^{\vert E \vert}\Lambda \Upsilon(E \cap F, \cdot) \leq 2^{\vert E \vert}\Lambda.
\end{align*}
Therefore, we bound the second term by 
\begin{multline*}
\int_{(\Rd)^{E}} \E\Ll[ \frac{1}{\rho_0 \vert \cu_m \vert} \int_{\cu_m}  \vert D_E (\a - \a^\# ) \vert  \Ll\vert\int_{(\Rd)^{F \setminus E}} D_F\nabla \psi_m\Rr\vert^2 \, \d \mu \Rr] \\
\leq 2^{\vert E \vert} \Lambda \int_{(\Rd)^{E \cap F}} \E\Ll[ \frac{1}{\rho_0 \vert \cu_m \vert} \int_{\cu_m} \Ll\vert \int_{(\Rd)^{F \setminus E}} D_F\nabla \psi_m\Rr \vert^2 \, \d \mu \Rr].
\end{multline*}
Here we apply the key estimate Proposition~\ref{prop:KeyEstimate} to conclude the proof a the uniform bound of $I(m, \rho_0, E, F)$ with respect to $m$ and $\rho_0$. This also implies the uniform bound \eqref{eq:ckmBound} for $c_{k,m}(\rho_0)$.

\textit{Step 3: Control of the tail $R_k$.} In this step, we need to control the tail in the expansion of $\Delta^\rho_m(\rho_0)$. 
Even if one were to keep track of the dependence on $k$ in the upper bound $\vert c_{k,m}(\rho_0) \vert \leq C_k$ obtained above, one cannot ensure the summability of the series $\sum_{j> k}\frac{C_j}{j!}\rho^j$. On the other hand, we know the function $\rho \mapsto \Delta^\rho_m(\rho_0)$ is indeed analytic for any fixed $\rho_0 \in \R_+$ (using a naive bound of $c_{k,m}$ depending on $m$, see~Lemma~\ref{lem:Expansion} and its proof in Subsection~\ref{subsec:Expansion}), so for $c_{j,m}$ defined in Proposition~\ref{prop:SmoothFinite} and $\rho_0 > 0$, we have 
\begin{align*}
c_{j,m}(\rho_0) = \Ll(\frac{\d}{\d \rho}\Rr)^j_{\vert \rho = 0} \Delta^\rho_m(\rho_0).  
\end{align*}
We also write $\partial^j \Delta^{\rho}_m$ as a shorthand notation for the $j$-th derivative at $\rho$,
\begin{align*}
\partial^j \Delta^{\rho}_m(\rho_0) := \Ll(\frac{\d}{\d \rho}\Rr)^j \Delta^\rho_m(\rho_0).
\end{align*}
Then we apply Taylor's expansion for the function $\rho \mapsto \Delta^{\rho}_m(\rho_0)$ until order $k$
\begin{align}\label{eq:Taylor1}
\Delta^{\rho}_m(\rho_0) = \sum_{j = 0}^k \frac{\partial^j \Delta^{0}_m(\rho_0)}{j!} \rho^j + \int_{0}^\rho \frac{\partial^{k+1}\Delta^{s}_m(\rho_0)}{k!} s^k \, \d s.
\end{align}
Recalling the definition of $\Delta^{\rho}_m$ in \eqref{eq:defPerApp}, we have 
\begin{align*}
\partial^{k+1} \Delta^{s}_m(\rho_0) &= \Ll(\frac{\d}{\d \rho}\Rr)^j_{\vert \rho = s} \Ll( q \cdot \ab_*^{-1}(\cu_m, \rho_0 + \rho) q - q \cdot \ab_*^{-1}(\cu_m, \rho_0 ) q\Rr) \\
&= \Ll(\frac{\d}{\d \rho}\Rr)^j_{\vert \rho = 0} \Ll( q \cdot \ab_*^{-1}(\cu_m, \rho_0 + s+ \rho) q - q \cdot \ab_*^{-1}(\cu_m, \rho_0 + s) q \Rr) \\
&= \partial^{k+1} \Delta^{0}_m(\rho_0 + s).
\end{align*}
Since $\partial^{k+1}\Delta^{0}_m(\rho_0 + s) = c_{k+1,m}(\rho_0+s)$, upon inserting this back into  \eqref{eq:Taylor1}, it follows that
\begin{align}\label{eq:Taylor2}
\Delta^{\rho}_m(\rho_0) = \sum_{j = 0}^k \frac{c_{j,m}(\rho_0)}{j!} \rho^j + \int_{0}^\rho \frac{c_{k+1,m}(\rho_0 + s)}{k!} s^k \, \d s.
\end{align}
This gives us an expression for the remainder of order $k$ in~\eqref{eq:ExpansionFinite}, which is
\begin{align}\label{eq:defRemainder}
R_k(m, \rho_0, \rho) := \int_{0}^\rho \frac{c_{k+1,m}(\rho_0 + s)}{k!} s^k \, \d s.
\end{align}
Using the uniform estimate~\eqref{eq:ckmBound} of $c_{k+1,m}(\rho_0+s)$ with respect to $\rho_0 + s$ and $m$, the remainder is of order $O(\rho^{k+1})$ independent of $\rho_0$ and $m$. This finishes our proof of Proposition~\ref{prop:SmoothFinite}.
\end{proof}

\subsection{Proof of the main theorem}
\label{sub:proof.main}

In this final subsection, we conclude the proof of the main Theorem~\ref{t.smooth}, using Proposition~\ref{prop:SmoothFinite}.

\begin{proof}[Proof of Theorem~\ref{t.smooth}]
As a first step, we show the existence of the limit in~\eqref{e.def.ck}. Let $k \geq 2$ and assume by induction that the existence of $c_j(\rho_0)$ is established for $1 \leq j \leq k-1$ and $\rho_0 > 0$ (recall that the existence of $c_1(\rho_0)$ follows from Theorem~\ref{t.C11}). For $\rho_0 > 0$, the sequence $\{ c_{k,m}(\rho_0) \}_{m\in \N}$ defined in~\eqref{e.def.ckm} is bounded by some positive constant $C_k(d,\Lambda)$ using~Proposition~\ref{prop:SmoothFinite}. Thus, there exists a subsequence $\{ c_{k,m_\ell}(\rho_0) \}_{\ell\in \N}$ (possibly depending on $\rho_0$) such that 
$$
c^*_k(\rho_0) := \lim_{\ell \to +\infty }c_{k,m_\ell}(\rho_0)
$$
exists. By~\eqref{eq:ExpansionFinite}, one has for $\rho > 0$
\begin{equation*}
\Ll\vert \Delta^\rho_{m_\ell}(\rho_0) - \sum_{j = 1}^{k-1} \frac{c_{j,m_\ell}(\rho_0)\rho^j }{j!} - \frac{c_{k,m_\ell}(\rho_0)}{k!} \rho^k \Rr\vert \leq |R_k(m_\ell,\rho_0,\rho)|,
\end{equation*}
and passing to the limit $\ell \rightarrow \infty$ yields (upon using~Proposition~\ref{prop:SmoothFinite}, the induction hypothesis and~\eqref{limit.increments}) 
\begin{equation}
\label{eq:LocUnifBoundDeltaRho}
\sup_{\rho_0 \in (0,\infty)} \Ll\vert \Delta^\rho(\rho_0) - \sum_{j = 1}^{k-1} \frac{c_j(\rho_0)\rho^j }{j!} - \frac{c_k^\ast(\rho_0)}{k!} \rho^k \Rr\vert \leq O(\rho^{k+1}), \qquad \text{for }\rho > 0.
\end{equation}
In particular,~\eqref{eq:LocUnifBoundDeltaRho} implies that $c^\ast_k(\rho_0)$ is the unique limit of the full sequence $\{ c_{k,m}(\rho_0) \}_{m\in \N}$, and we denote it by $c_k(\rho_0)$. This proves~\eqref{e.def.ck}. We note in passing that
\begin{equation}
\label{eq:BoundednessInfVolCoeff}
\text{$c_1,\cdots,c_k : (0,\infty) \rightarrow \R$ are bounded functions,}
\end{equation}
which follows by~\eqref{e.def.ck} and $|c_{k,m}| \leq C_k(d,\Lambda)$, see~Proposition~\ref{prop:SmoothFinite}.
Thus, we can write
\begin{equation}
\label{eq:ExpansionPosRho}
\Delta^\rho(\rho_0) = \sum_{j = 1}^k \frac{c_j(\rho_0)}{j!}\rho^j + O(\rho^{k+1}), \qquad \text{for } \rho > 0,
\end{equation}
with the error term independent of $\rho_0$. To simplify notation, we again set $f(\cdot) := q \cdot \ab^{-1}(\cdot )q$. We claim that the expansion~\eqref{eq:ExpansionPosRho} implies that 
\begin{equation}
\label{eq:LipCont}
\text{$c_1,\cdots,c_k : (0,\infty) \rightarrow \R$ are Lipschitz-continuous,}
\end{equation}
and moreover
\begin{equation}
\label{eq:differentiability_f}
\text{$f$ has $k$ derivatives, and }f^{(j)}(\rho_0) = c_j(\rho_0), \ 1 \leq j \leq k, \ \rho_0 > 0.
\end{equation}
We first define the forward difference of order $\ell$ of $f$ at $\rho_0 > 0$, $\ell \in \N_+$, with step size $\rho \geq 0$ as 
\begin{equation}
\label{eq:ForwardDef}
\Delta^{\ell,\rho}[f](\rho_0) = \Delta^{\ell,\rho}(\rho_0) := \sum_{i = 0}^\ell \binom{\ell}{i} (-1)^{\ell-i} f(\rho_0 + i\rho),
\end{equation}
(note that with our fixed choice of $f$, $\Delta^{1,\rho}(\rho_0) = \Delta^\rho(\rho_0)$). We claim that these quantities fulfill for $1 \leq \ell \leq k$, $\rho_0 > 0$ and $\rho \geq 0$,
\begin{equation}
\label{eq:ForwardEq}
\Delta^{\ell,\rho}(\rho_0)  = c_\ell(\rho_0)\rho^\ell + O(\rho^{\ell+1}),
\end{equation}
with the error term independent of $\rho_0$, and for $1 \leq \ell < k$, $\rho_0 > 0$ and $\rho \geq 0$,
\begin{equation}
\label{eq:RecursionDiff}
\Delta^{\ell,\rho}(\rho_0+\rho) - \Delta^{\ell,\rho}(\rho_0)  = \Delta^{\ell+1,\rho}(\rho_0).
\end{equation}
We prove~\eqref{eq:ForwardEq}. To this end, we infer from~\eqref{eq:ExpansionPosRho} that
\begin{equation}
\label{eq:ExpansionPosRhoRewritten}
f(\rho_0 + i\rho) = \sum_{j = 0}^k \frac{c_j(\rho_0)}{j!}(i\rho)^j + O(\rho^{k+1}),\qquad \text{for } \rho > 0, 1 \leq i \leq k,
\end{equation}
where we defined for convenience $c_0(\rho_0) := f(\rho_0)$. Equation~\eqref{eq:ExpansionPosRhoRewritten} is now inserted into~\eqref{eq:ForwardDef}, which yields for $\rho_0 > 0$, $\rho \geq 0$ and $1 \leq \ell \leq k$ that
\begin{align*}
\Delta^{\ell,\rho}(\rho_0) & = \sum_{i = 0}^\ell \binom{\ell}{i} (-1)^{\ell-i} \left\{ \sum_{j = 0}^\ell \frac{c_j(\rho_0)}{j!} (i\rho)^j + \sum_{j = \ell + 1}^k \frac{c_j(\rho_0)}{j!} (i\rho)^j + O(\rho^{k+1})
\right\} \\
& = \sum_{j = 0}^\ell \frac{c_j(\rho_0)}{j!} \rho^j \left(\sum_{i = 0}^\ell \binom{\ell}{i} (-1)^{\ell-i} i^j \right) + O(\rho^{\ell+1}) \\
& = c_\ell(\rho_0) \rho^\ell + O(\rho^{\ell+1}).
\end{align*}
Here, we combined the terms involving $\rho^j$ with $j \in \{\ell+1,\cdots,k\}$ and $O(\rho^{k+1})$ into a contribution $O(\rho^{\ell+1})$ and using~\eqref{eq:BoundednessInfVolCoeff} in going from the first to the second line. From the second to the third line, we use the fact that for any polynomial $P$ with real coefficients $A_0,\cdots,A_k$, i.e., ${P(X) = A_\ell X^\ell + \cdots + A_0}$ of degree smaller or equal to $\ell$, one has ${\sum_{i = 0}^\ell \binom{\ell}{i} (-1)^{\ell-i} P(i) = \ell! A_\ell}$. Equation~\eqref{eq:RecursionDiff} also follows directly from elementary properties of binomial coefficients. Indeed:  
\begin{align*}
\Delta^{\ell,\rho}(\rho_0+\rho) - \Delta^{\ell,\rho}(\rho_0) & = \sum_{i = 0}^\ell (-1)^{\ell-i} \binom{\ell}{i} f(\rho_0+(i+1)\rho) - \sum_{i = 0}^\ell (-1)^{\ell-i} \binom{\ell}{i} f(\rho_0+i\rho) \\
& = \sum_{i = 1}^{\ell+1} (-1)^{\ell - i +1} \binom{\ell}{i-1} f(\rho_0+i\rho) + \sum_{i = 0}^\ell (-1)^{\ell - i +1} \binom{\ell}{i} f(\rho_0+i\rho) \\
& = \sum_{i = 0}^{\ell+1} (-1)^{\ell+1-i} \left\{\binom{\ell}{i-1} + \binom{\ell}{i}   \right\} f(\rho_0+i\rho) \\
& = \sum_{i = 0}^{\ell+1} (-1)^{\ell+1-i} \binom{\ell+1}{i} f(\rho_0+i\rho) = \Delta^{\ell+1,\rho}(\rho_0).
\end{align*}

Identity~\eqref{eq:ForwardEq} is now used at $\rho_0+\rho$ and $\rho_0$ on the left-hand side of~\eqref{eq:RecursionDiff}, and at $\rho_0$ on the right-hand side of the same equation (recall that the $O(\rho^\ell)$ resp. $O(\rho^{\ell+1})$ terms do not depend on $\rho_0$):
\begin{equation}
\begin{split}
& \frac{1}{\rho^\ell} (\Delta^{\ell,\rho}(\rho_0 + \rho ) - \Delta^{\ell,\rho}(\rho_0)) = \frac{1}{\rho^\ell} \Delta^{\ell+1,\rho}(\rho_0) \\
\Rightarrow \qquad & c_\ell(\rho_0 + \rho) - c_\ell(\rho_0) = c_{\ell+1}(\rho_0)\rho + O(\rho), \qquad \text{for }\rho > 0.
\end{split}
\end{equation}
By the boundedness of $c_{\ell+1}$~\eqref{eq:BoundednessInfVolCoeff}, this establishes the Lipschitz-continuity~\eqref{eq:LipCont} of $c_\ell$. \\

Now we prove the differentiability~\eqref{eq:differentiability_f}: By induction, suppose that we already established that $f^{(\ell-1)}(\rho_0) = c_{\ell-1}(\rho_0)$ for all $\rho_0 \in (0,\infty)$, and $1 \leq \ell < k$. Now, for $\rho > 0$, one has 
\begin{equation}
\begin{split}
\Delta^{\ell-1,\rho}(\rho_0) & = 
\sum_{i = 0}^{\ell-1} \binom{\ell-1}{i} (-1)^{\ell-1-i} \left\{ \sum_{j = 0}^{\ell-1} \frac{c_j(\rho_0)}{j!} (i\rho)^j + \frac{c_\ell(\rho_0)}{\ell!} (i\rho)^\ell\right\} + O(\rho^{\ell+1}) \\
&= c_{\ell-1}(\rho_0) \rho^{\ell-1} + \underbrace{\sum_{i = 0}^{\ell-1} \binom{\ell-1}{i} (-1)^{\ell-1-i} \frac{i^{\ell}}{\ell!}}_{=:C(\ell)} c_\ell(\rho_0) \rho^\ell + O(\rho^{\ell+1}),
\end{split}
\end{equation}
having used the same arguments as in the proof of~\eqref{eq:ForwardEq}, with $C(\ell) \in \R$ some numerical constant. The latter gives us that for $\rho > 0$,
\begin{equation}
\label{eq:Rightderiv}
\begin{split}
\frac{1}{\rho^\ell} \Delta^{\ell,\rho}(\rho_0) & \stackrel{\eqref{eq:RecursionDiff}}{=} \frac{1}{\rho} \left\{ \frac{1}{\rho^{\ell-1}} \Delta^{\ell-1}(\rho_0+\rho) - \frac{1}{\rho^{\ell-1}} \Delta^{\ell-1}(\rho_0)  \right\} \\
& = \underbrace{\frac{1}{\rho} (c_{\ell-1}(\rho_0+\rho) - c_{\ell-1}(\rho_0))}_{ = \frac{1}{\rho}(f^{(\ell-1)}(\rho_0 + \rho) - f^{(\ell-1)}(\rho_0))} + C(\ell)(c_\ell(\rho_0 + \rho) - c_\ell(\rho_0)) + O(\rho).
\end{split}
\end{equation}
On the other hand, the left-hand side of the equation above is also equal to $c_\ell(\rho_0) + O(\rho)$. Letting $\rho \downarrow 0$ then shows that the right-derivative of $f^{(\ell-1)}$ at $\rho_0$ exists and equals $c_\ell(\rho_0)$, upon using~\eqref{eq:LipCont} for $c_\ell$. Replacing $\rho_0$ by $\rho_0-\rho$ in~\eqref{eq:Rightderiv} then gives 
\begin{equation}
\frac{1}{\rho}(f^{(\ell-1)}(\rho_0) - f^{(\ell-1)}(\rho_0-\rho)) = c_\ell(\rho_0-\rho) + O(\rho),
\end{equation}
from which one can then infer the left-derivative of $f^{(\ell-1)}$ as well (using once more~\eqref{eq:LipCont}). This finishes the proof of~\eqref{eq:differentiability_f}. Since $k \in \N_+$ was arbitrary, the proof of Theorem~\ref{t.smooth} is complete.
\end{proof}

\section{Local uniform convergence}
\label{s.local.unif}

The aim of this section is to strengthen the pointwise convergence of the sequences $(\ab(\cu_m,\rho_0))_{m \geq 1}$ and $(\ab_*(\cu_m,\rho_0))_{m \geq 1}$ towards $\ab(\rho_0)$ for each fixed $\rho_0 > 0$  (see~\eqref{eq:defab} and below) to a locally uniform convergence, that is to show the following statement. 

\begin{proposition}
\label{p.loc.unif}
The mappings $\ab(\cu_m,\cdot)$ and $\ab_*(\cu_m,\cdot)$ both converge to $\ab(\cdot)$ locally uniformly over $[0,\infty)$ as $m$ tends to infinity. Moreover, for every $k \in \N_+$, the sequence of approximate derivatives $c_{k,m}$ converges locally uniformly to $c_k$, as $m$ tends to infinity (recall~\eqref{e.def.ckm} and~\eqref{e.def.ck} for the respective definitions).
\end{proposition}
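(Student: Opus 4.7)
The proposition has three components, which I would prove in parallel. For the local uniform convergence of $\rho_0 \mapsto \ab_*(\cu_m, \rho_0)$ and of $c_{k,m}$, the crucial input is the \emph{uniform} bound $|c_{k,m}(\rho_0)| \leq C_k(d, \Lambda)$ from Proposition~\ref{prop:SmoothFinite}, combined with the identification of $c_{k,m}(\rho_0)$ as the $k$-th derivative of $\rho_0 \mapsto q \cdot \ab_*^{-1}(\cu_m, \rho_0) q$ that is proved in Step~3 of the proof of Proposition~\ref{prop:SmoothFinite}. This immediately implies that the families $\{\rho_0 \mapsto q \cdot \ab_*^{-1}(\cu_m, \rho_0) q\}_{m}$ and $\{c_{k,m}\}_{m}$ are uniformly Lipschitz on any compact subset of $(0, \infty)$, with Lipschitz constants depending only on $d$, $\Lambda$, $k$ and $|q|$ (via the bounds on $c_{1,m}$ and $c_{k+1,m}$ respectively). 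Combined with the pointwise convergence statements from Theorem~\ref{t.smooth}, the Arzel\`a--Ascoli theorem then yields local uniform convergence of these scalar families. Applying this for $q$ ranging over a basis of $\R^d$ and using the polarization identity gives entry-wise local uniform convergence of $\ab_*^{-1}(\cu_m, \cdot) \to \ab^{-1}(\cdot)$, and smoothness of matrix inversion on the set of uniformly elliptic matrices upgrades this to local uniform convergence of $\ab_*(\cu_m, \cdot)$ itself.

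For the convergence of $\rho_0 \mapsto \ab(\cu_m, \rho_0)$, the previous approach is not directly available: unlike $\psi_m$, the minimizer of $\nu(\cu_m, p, \rho_0)$ depends essentially on $\rho_0$, so the structural argument of Lemma~\ref{lem:J} fails for the primal functional and we do not have at hand a uniform-in-$m$ derivative bound for $p \cdot \ab(\cu_m, \cdot) p$. I would instead invoke Dini's theorem, which applies in the present setting because the sequence $\ab(\cu_m, \cdot)$ is monotonically decreasing in $m$ in the matrix order, converges pointwise to $\ab(\cdot)$, and the limit $\ab(\cdot)$ is continuous by Theorem~\ref{t.smooth}. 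Dini's theorem then yields local uniform convergence provided each $\ab(\cu_m, \cdot)$ is itself continuous. To obtain this continuity it is enough to show continuity of $\nu(\cu_m, p, \cdot)$ for every $p \in \R^d$: using the minimizer $\phi_m^{\rho_0} \in \cH^1_0(\cu_m)$ at density $\rho_0$ as a trial function at density $\rho_0 + \rho$ through the coupling $\mu + \mu_\rho \sim \mathrm{Poisson}(\rho_0 + \rho)$, and expanding the resulting expectation in $\rho$ as in Lemma~\ref{lem:Expansion}, one gets $\nu(\cu_m, p, \rho_0 + \rho) - \nu(\cu_m, p, \rho_0) = O(\rho)$ with a constant allowed to depend on $m$, $p$ and $\rho_0$; the reverse inequality follows by swapping the roles of $\rho_0$ and $\rho_0 + \rho$.

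The main obstacle is precisely this continuity of $\ab(\cu_m, \cdot)$ needed to invoke Dini. For $\ab_*$ and for the $c_{k,m}$'s, the fact that $\psi_m$ does not depend on the density (Lemma~\ref{lem:J}) is what allowed Proposition~\ref{prop:SmoothFinite} to produce uniform-in-$m$ derivative estimates, which translate directly into equicontinuity and Arzel\`a--Ascoli convergence. For the primal quantity $\ab(\cu_m, \cdot)$ no such structural simplification is available, so one must settle for per-$m$ continuity via a test-function/Poisson-expansion argument and rely on monotonicity through Dini's theorem rather than on equicontinuity.
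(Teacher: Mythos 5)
Your proposal gets the overall strategy for $\ab_\ast(\cu_m,\cdot)$ and $c_{k,m}$ right, and takes a slightly different route from the paper. The paper invokes Dini's theorem for $\ab_\ast(\cu_m,\cdot)$ as well as for $\ab(\cu_m,\cdot)$, getting per-$m$ continuity of $\ab_\ast(\cu_m,\cdot)$ from the first-order expansion \eqref{expansion.11.m}, and then for the $c_{k,m}$ it runs a direct comparison between the finite-volume expansion \eqref{eq:ExpansionFinite} and its infinite-volume counterpart \eqref{eq:ExpansionPosRho}, making crucial use of the already-established uniform convergence of $\ab_\ast^{-1}(\cu_m,\cdot)$. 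Your Arzel\`a--Ascoli argument (uniform Lipschitz bound from $|c_{k+1,m}|\le C_{k+1}$, plus pointwise convergence) is a clean alternative for these parts, does not rely on the monotonicity in $m$, and is correct. The polarization step is harmless.

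The genuine gap is in the continuity of $\ab(\cu_m,\cdot)$, and it is exactly at the spot you flag as the ``main obstacle.'' You propose to plug the primal minimizer $\phi_m^{\rho_0}$ into the problem at density $\rho_0+\rho$ ``through the coupling $\mu+\mu_\rho$'' and then ``expand as in Lemma~\ref{lem:Expansion}.'' This does not go through as stated. If you evaluate $\phi_m^{\rho_0}$ at $\mu+\mu_\rho$, you need to control $\E_{\rho_0}\bigl[\int_{\cu_m}|\nabla\phi_m^{\rho_0}(\mu+\sum_i\delta_{x_i},\cdot)|^2\,\d\mu\bigr]$, i.e., slice-wise energy bounds for $\phi_m$ under the addition of extra particles; but the whole point, which you correctly note, is that the structural identification of Lemma~\ref{lem:J} and the resulting improved energy estimate \eqref{energy.basic.slice} are \emph{not} available for the primal minimizer. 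Finite $\cH^1_0$-energy at density $\rho_0$ does not a priori imply finite energy at density $\rho_0+\rho$. If instead you mean to evaluate $\phi_m^{\rho_0}$ at $\mu$ only (ignoring the $\mu_\rho$ particles), then the resulting object is no longer a function of the total measure $\mu+\mu_\rho$ alone, and hence is not admissible in $\cH^1_0(\cu_m)$ for the problem at density $\rho_0+\rho$; justifying its use is precisely the content of the paper's Lemma~\ref{lem:Enlarge} (enlarging the test-function space to $\cD(\cu_m)$ without changing the infimum), which your sketch does not supply. Finally, ``the reverse inequality follows by swapping the roles of $\rho_0$ and $\rho_0+\rho$'' is not immediate: the natural superposition coupling only goes in the increasing-density direction, and the paper's lower bound in fact requires a separate argument via a quenched functional $\nu(\cu_m,p;\mu_\rho)$ together with the restriction to the event $\{\mu_\rho(\cu_{m+1})=0\}$ of probability $e^{-\rho|\cu_{m+1}|}$. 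So while Dini is the right tool here, the continuity input it needs is substantively harder than your sketch suggests, and the paper has to build new machinery (Lemma~\ref{lem:Enlarge} and the quenched comparison) to obtain it.
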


The local uniform convergence of $\ab(\cu_m,\cdot)$ and $\ab_*(\cu_m,\cdot)$ could in fact be obtained as a consequence of the quantitative estimate~\eqref{eq:rate} and the observation that the exponent $\al > 0$ and the constant $C < \infty$ appearing there can be chosen locally uniformly over $\rho_0 > 0$. However, we think it useful to point out that Proposition~\ref{p.loc.unif} is actually a rather straightforward consequence of the qualitative statement that, for each fixed $\rho_0 > 0$,
\begin{equation}
\label{e.identity.limits}
\ab(\rho_0) = \lim_{m \to \infty} \ab(\cu_m,\rho_0) = \lim_{m \to \infty} \ab_*(\cu_m,\rho_0).
\end{equation}
As will be seen, once \eqref{e.identity.limits} is granted, the fact that these sequences converge locally uniformly as $\rho_0$ varies is an application of Dini's theorem.

\iffalse{

\jc{What is below is some older version of the same thing.}
In this part, we prove the following property:

\begin{proposition}\label{prop:UniformApprox}
We treat $\rho \mapsto \ab(\cu_m, \rho)$ and $\rho \mapsto \ab_*(\cu_m, \rho)$ as functions, then we have the following properties:
\begin{enumerate}
\item For fixed $m \in \N$, $\ab(\cu_m, \cdot)$ and $\ab_*(\cu_m, \cdot)$ are continuous.
\item The limit $\ab(\cdot)$ is continuous.
\item The convergence $\ab(\cu_m, \cdot) \xrightarrow{m \to \infty} \ab(\cdot)$, $\ab_*(\cu_m, \cdot) \xrightarrow{m \to \infty} \ab(\cdot)$ locally uniform.
\end{enumerate}
\end{proposition}

} \fi

Since we will need to show the continuity of $\ab(\cu_m, \cdot)$, we first need to develop some version of Lemma~\ref{lem:J} geared towards $\nu(U,p,\rho_0)$ instead of $\nu_\ast(U,q,\rho_0)$. To state it, we define for a bounded open $U \subseteq \R^d$ the function space $\cD(U)$ to consist of sequences of functions ${f = (f_n)_{n \geq 0}}$, where $f_n : U^n \rightarrow \R$ satisfy
\begin{enumerate}
\item $f_0$ is a constant and for every $n \in \N_+$, $f_n \in C^{\infty}(U^n)$;
\item There exists a compact set $K \subset U$ such that for any $x_i \notin K$
\begin{align*}
f_n(x_1, \cdots, x_{i-1}, x_i, x_{i+1}, \cdots, x_n) = f_{n-1}(x_1, \cdots, x_{i-1}, x_{i+1}, \cdots, x_n).
\end{align*}
\end{enumerate}
The canonical projection in \eqref{eq:Projection} is an injection from $\cC^{\infty}_c(U)$ to $\cD(U)$; in other words, we can think of $\cC^\infty_c(U)$ as a subset of $\cD(U)$.

We then define a version of the minimization problem in the first line of \eqref{eq:defNu} with $\cD(U)$ replacing $\cH^1_0(U)$. Define for $f \in \cD(U)$ the quantity
\begin{multline}\label{eq:defKFunc}
\mcl K(f, U, p, \rho_0) \\
:= \frac{e^{-\rho_0 \vert U \vert} }{2 \rho_0 \vert U \vert}\sum_{n=0}^{\infty} \frac{(\rho_0 \vert U \vert)^n}{n!} \Ll(\fint_{U^n} \sum_{i=1}^n (p + \nabla_{x_i} f_n) \cdot \a\Ll(\sum_{k=1}^n \delta_{x_k}, x_i\Rr)(p + \nabla_{x_i} f_n) \, \d x_1 \cdots \d x_n\Rr).
\end{multline}
With this definition, one has the following result.

\begin{lemma}\label{lem:Enlarge}
For every bounded open set $U$,  
$\nu(U,p, \rho_0) = \inf_{f \in \cD(U)} \mcl K(f, U, p, \rho_0)$.
\end{lemma}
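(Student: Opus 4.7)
The plan is to prove the two inequalities separately, using the canonical projection~\eqref{eq:Projection} as a bridge between $\cH^1_0(U)$ and $\cD(U)$, and matching the two quadratic integrands via the finite-range assumption~\eqref{a.local}.

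First I would show $\inf_{f \in \cD(U)} \mcl K(f, U, p, \rho_0) \leq \nu(U, p, \rho_0)$. I take an arbitrary $v \in \cC^\infty_c(U)$, which is $\mcl F_K$-measurable and smooth for some compact $K \subset U$, and observe that its canonical projection $(v_n)_{n \in \N}$ lies in $\cD(U)$: property~(1) follows from the fiberwise smoothness built into $\cC^\infty(U)$, while property~(2) is exactly the $\mcl F_K$-measurability of $v$ read at the level of canonical projections. Expanding the expectation defining $\nu$ according to the Poisson structure --- conditionally on $\mu(U) = n$, the atoms of $\mu \mres U$ are uniform i.i.d.\ on $U$ and independent of $\mu \mres U^c$ --- and using~\eqref{a.local} to identify $\a(\mu, x_i)$ with $\a(\sum_{k=1}^n \delta_{x_k}, x_i)$ whenever $B_{1/2}(x_i) \subset U$, the $\nu$-energy of $v$ matches $\mcl K((v_n))$; taking the infimum over $v$ yields the inequality after invoking the density of $\cC^\infty_c(U)$ in $\cH^1_0(U)$ from~\eqref{def:spaces}.

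Conversely, for $\nu \leq \inf_{f \in \cD(U)} \mcl K$, I would first reduce to the permutation-invariant case by the symmetrization $\tilde f_n(x_1, \ldots, x_n) := (n!)^{-1} \sum_{\sigma \in S_n} f_n(x_{\sigma(1)}, \ldots, x_{\sigma(n)})$: since the measure $\sum_{k=1}^n \delta_{x_k}$ is already permutation-invariant in its atoms, Jensen's inequality applied to the convex map $\zeta \mapsto \zeta \cdot \a(\sum_k \delta_{x_k}, x_i) \zeta$ produces $\mcl K(\tilde f) \leq \mcl K(f)$, and $\tilde f$ still belongs to $\cD(U)$ with the same compact $K$. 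A permutation-invariant element of $\cD(U)$ is exactly the canonical projection of a unique smooth $\mcl F_K$-measurable function $v$, hence $v \in \cC^\infty_c(U) \subset \cH^1_0(U)$; the matching identity from the previous paragraph then delivers $\nu(U, p, \rho_0) \leq \mcl K(\tilde f) \leq \mcl K(f)$.

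The main obstacle is the apparent mismatch between $\a(\mu, z)$ in $\nu$, which depends on $\mu \mres B_{1/2}(z)$ and can therefore be influenced by particles outside $U$ in the boundary layer $\{z \in U : \dist(z, \partial U) \leq 1/2\}$, and $\a(\sum_{k=1}^n \delta_{x_k}, x_i)$ in $\mcl K$, which sees only atoms inside $U$. By~\eqref{a.local} the two agree on the bulk of $U$, so the matching identity used in both directions above is exact only for test functions whose support, together with its $1/2$-neighborhood, lies in $U$. I would therefore carry out the argument on this smaller class of test functions and remove the restriction by a density argument, relying on~\eqref{def:spaces} on the $\cH^1_0(U)$ side and on a perturbation of the compact set $K$ on the $\cD(U)$ side.
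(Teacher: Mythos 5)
Your second step (the symmetrization $\tilde f_n := \frac{1}{n!}\sum_{\sigma \in S_n} f_n \circ \sigma$, the verification that $\tilde f$ stays in $\cD(U)$ with the same compact set $K$, and the reduction via Jensen's inequality using the permutation invariance of $\sum_k \delta_{x_k}$ and of $\a(\sum_k \delta_{x_k}, x_i)$ in the $x_k$'s) coincides with the paper's proof, which in fact consists only of this symmetrization step followed by the identification of the permutation-invariant subspace of $\cD(U)$ with $\cC^\infty_c(U)$. You attempt to go further by explicitly matching $\mcl K$ with the $\nu$-energy on this subspace, and you correctly spotted that~\eqref{a.local} creates a discrepancy near $\partial U$: $\a(\mu, z)$ may depend on particles of $\mu$ lying outside $U$ whenever $z$ is in the layer $L := \{z \in U : \dist(z, \partial U) < 1/2\}$, while $\a(\sum_{k=1}^n \delta_{x_k}, z)$ in $\mcl K$ never sees such particles.

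The density argument you propose to remove this mismatch does not close the gap, and this is a genuine flaw. Your claim that the matching identity is ``exact'' for $v \in \cC^\infty_c(U)$ whose support $K$ satisfies $\dist(K,\partial U) > 1/2$ is false. For such $v$ the gradient $\nabla v(\mu, \cdot)$ indeed vanishes on $L$, but the integrand on $L$ is then $p\cdot\a(\cdot, z)\,p$, which is \emph{not} zero, and this is precisely where $\a(\mu, z)$ and $\a(\mu\mres U, z)$ disagree. Consequently the difference between the $\nu$-energy of $v$ and $\mcl K((v_n))$ equals
\[
\frac{1}{2\rho_0 |U|}\, \E_{\rho_0}\Ll[\int_{L} p\cdot\bigl(\a(\mu, z)-\a(\mu\mres U, z)\bigr)p\,\d\mu(z)\Rr],
\]
a $v$-independent quantity that does not tend to zero as $K$ grows to fill $U$. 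A density argument therefore cannot restore the equality; the ``matching identity'' you invoke in both directions of the proof never holds on $\cC^\infty_c(U)$, and this leaves both of your inequalities with the same defect. Note that the paper's own proof also leaves this matching step implicit; what is actually used of the lemma downstream is only that $\nu(U,p,\rho_0)$ lower-bounds the \emph{untruncated} Dirichlet energy of a non-permutation-invariant test function in $\cD(U)$, and for that the symmetrization argument alone suffices, since the full $\a(\mu,\cdot)$ is then retained and no truncation occurs.
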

\begin{proof}

For every $f = (f_n)_{n \geq 0} \in \cD(U)$, we consider the symmetrization $\widetilde{f} = (\widetilde{f}_n)_{n \geq 0}$ by defining $\widetilde{f}_n = \frac{1}{n!}\sum_{\sigma \in S_n} f(x_{\sigma(1)}, \cdots , x_{\sigma(n)})$. This function fulfills $\widetilde{f} \in \cD(U)$: Indeed $\widetilde{f}_n \in C^\infty(U^n)$ follows directly from the definition, and letting $K \subseteq U$ be the compact set associated with $f$, one has e.g.~for the case $x_1 \notin K$  
\begin{align*}
\tilde{f}_n(x_1, x_2, \cdots, x_n) &= \frac{1}{n!}\sum_{\sigma \in S_n} f_n(x_{\sigma(1)}, x_{\sigma(2)}, \cdots, x_{\sigma(n)}) \\
&= \frac{1}{(n-1)!}\sum_{\sigma \in S_{n-1}(\{2,\cdots,n\})} f_{n-1}(x_{\sigma(2)}, x_{\sigma(3)}, \cdots, x_{\sigma(n)}) \\
&= \tilde{f}_{n-1}(x_2, \cdots, x_n).
\end{align*}
Here, from the first line to the second line, we can remove $x_1$ in the function, and make use of the natural $n$-to-$1$ bijection from the group of permutations $S_n$ to the group of permutations $S_{n-1}(\{2,\cdots,n\})$. This establishes the second condition for functions in $\cD(U)$, so $\tilde{f} \in \cD(U)$. 

By an application of Jensen's inequality, it follows that
\begin{align*}
\mcl K(\tilde{f}, U, p, \rho_0) & \leq \frac{e^{-\rho_0 \vert U \vert} }{2 \rho_0 \vert U \vert}\sum_{n=0}^{\infty} \frac{(\rho_0 \vert U \vert)^n}{n!}  \frac{1}{n!} \sum_{\sigma \in S_n} \Ll(\fint_{U^n} \sum_{i=1}^n  \Ll((p + \nabla_{x_i}   f_n(x_{\sigma(1)}, \cdots, x_{\sigma(n)})) \Rr. \Rr. \\
& \qquad  \Ll.\Ll. \cdot \a\Ll(\sum_{k=1}^n \delta_{x_k}, x_i\Rr)(p + \nabla_{x_i}   f_n(x_{\sigma(1)}, \cdots, x_{\sigma(n)}))\Rr) \, \d x_1 \cdots \d x_n\Rr),
\end{align*}
which implies that $\mcl K(\tilde{f}, U, p, \rho_0)  \leq \mcl K(f, U, p, \rho_0)$. This establishes that the value $\inf_{f \in \cD(U)} \mcl K(f, U, p, \rho_0)$ can be attained on the subspace with invariance by permutation, which can be identified as $\cC^{\infty}_c(U)$.
\end{proof}

\begin{proof}[Proof of Proposition~\ref{p.loc.unif}]

We need to verify that 
\begin{equation}
\label{eq:Continuity_finite_m}
    \text{For fixed $m \in \N$, $\ab(\cu_m, \cdot)$ and $\ab_*(\cu_m, \cdot)$ are continuous.}
\end{equation}

Once~\eqref{eq:Continuity_finite_m} is established, the uniform convergence follows from Dini's theorem, which states that if a decreasing or increasing sequence of continuous functions $(f_n)_{n \geq 1}$ converges pointwisely to a continuous function $f$, then the convergence is a locally uniform. Recall that $(\ab(\cu_m, \cdot))_{m \geq 1}$ is decreasing and $(\ab_*(\cu_m, \cdot))_{m \geq 1}$ is increasing and the common limit~\eqref{e.identity.limits} is ensured by \cite[Theorem 1.1]{bulk}. Moreover, note that $\ab(\cdot)$ is continuous by Theorem~\ref{t.smooth} (in fact, to establish the continuity of $\ab(\cdot)$ it suffices to establish its upper and lower semicontinuity, which follows from the monotone convergence of $\ab(\cu_m,\cdot)$ and $\ab_*(\cu_m,\cdot)$, respectively). Therefore, it suffices to justify the continuity condition~\eqref{eq:Continuity_finite_m}.

\iffalse{

\textit{Step 2: Continuity of $\ab(\cdot)$.} For the moment, we admits the continuity of $\ab_*(\cu_m, \cdot)$ and $\ab(\cu_m, \cdot)$, whose proof is in Step 3 and 4, and aim to prove the two semi-continuities 
\begin{align}
\liminf_{\rho_1 \to \rho_0}\ab(\rho_1) \geq \ab(\rho_0), \label{eq:lsc} \\ 
\limsup_{\rho_1 \to \rho_0}\ab(\rho_1) \leq \ab(\rho_0). \label{eq:usc}
\end{align}
For \eqref{eq:lsc}, we use the definition that $\ab(\rho_0) = \sup_{m} \ab_*(\cu_m, \rho_0)$, then for any $\epsilon > 0$, there exists a $M$ such that for any $m \geq  M$
\begin{align*}
\ab_*(\cu_m, \rho_0) \geq \ab(\rho_0) - \frac{\epsilon}{2}.
\end{align*}
Moreover, we use the continuity  $\ab_*(\cu_M, \cdot)$, there exists a neighborhood $(\rho_0 - \delta, \rho_0 + \delta)$, such that for any $\rho_1 \in (\rho_0 - \delta, \rho_0 + \delta)$, we have 
\begin{align*}
\ab_*(\cu_M, \rho_1) \geq \ab_*(\cu_M, \rho_0) - \frac{\epsilon}{2} \geq \ab(\rho_0) - \epsilon.
\end{align*} 
Since $\ab_*(\cu_m, \cdot)$ is increasing, for any $m \geq M$ and any $\rho_1 \in (\rho_0 - \delta, \rho_0 + \delta)$ we have 
\begin{align*}
\ab_*(\cu_m, \rho_1) \geq \ab_*(\cu_M, \rho_1) \geq \ab(\rho_0) - \epsilon.
\end{align*}
This implies that 
\begin{align*}
\liminf_{\rho_1 \to \rho_0}\ab(\rho_1) =  \liminf_{\rho_1 \to \rho_0} \sup_m \ab_*(\cu_m, \rho_1) \geq \ab(\rho_0) - \epsilon.
\end{align*}
By the arbitrary choice of $\epsilon$, we prove \eqref{eq:lsc} and \eqref{eq:usc} can be done similarly using $\ab(\cu_m, \cdot)$.

}\fi

\textit{Step 1: Continuity of $\ab_*(\cu_m, \cdot)$.} The continuity of $\ab_*^{-1}(\cu_m, \cdot)$ follows immediately from~\eqref{expansion.11.m}, and this implies the continuity of $\ab_*(\cu_m, \cdot)$. 

\textit{Step 2: Continuity of $\ab(\cu_m, \cdot)$.} 
%This is the most difficult part in the proof since for $\ab(\cu_m, \cdot)$ we cannot rely on a PDE representation as we could for $\ab_*(\cu_m, \cdot)$. 
We use the exact expression of the subadditive quantity
\begin{align*}
\nu(\cu_m, p,\rho_0+\rho) & = p \cdot \ab(\cu_m, \rho_0 + \rho) p  \\
& = \E \Ll[ \frac{1}{(\rho_0 + \rho) \vert \cu_m \vert} \int_{\cu_m} (p + \nabla \phi^{\rho}_{m}) \cdot \a^\rho (p + \nabla \phi^{\rho}_{m}) \, \d (\mu + \mu_\rho)\Rr],
\end{align*} 
for $m \in \N$, $p \in \R^d$ and $\phi^{\rho}_m$ denotes the minimizer in the definition of $\nu(\cu_m,p,\rho_0+\rho)$.
We now derive an upper bound on the above expression. Using Lemma~\ref{lem:Enlarge}, we know that $\phi_{m}(\mu)$ is a sub-minimizer for the problem $\nu(\cu_m, p,\rho_0 + \rho)$ with density $\rho + \rho_0$. Also with the help of Mecke's identity \eqref{mecke}, we obtain that 
\begin{align*}
& p \cdot \ab(\cu_m, \rho_0 + \rho) p \\
& \leq \E \Ll[ \frac{1}{(\rho_0 + \rho) \vert \cu_m \vert} \int_{\cu_m} (p + \nabla \phi_{m}(\mu, \cdot)) \cdot \a(\mu+\mu_\rho, \cdot) (p + \nabla \phi_{m}(\mu, \cdot)) \, \d (\mu + \mu_\rho)\Rr] \\
& \leq \E \Ll[ \frac{1}{(\rho_0 + \rho) \vert \cu_m \vert} \int_{\cu_m} (p + \nabla \phi_{m}(\mu, \cdot)) \cdot \a(\mu+\mu_\rho, \cdot) (p + \nabla \phi_{m}(\mu, \cdot)) \, \d \mu \Rr] + \frac{\rho \Lambda \vert p \vert^2}{\rho_0 + \rho} \\
& = \E \Ll[ \frac{1}{(\rho_0 + \rho) \vert \cu_m \vert} \int_{\cu_m} (p + \nabla \phi_{m, \xi}(\mu, \cdot)) \cdot (\a(\mu+\mu_\rho, \cdot) - \a(\mu, \cdot)) (p + \nabla \phi_{m}(\mu, \cdot)) \, \d \mu \Rr] \\
& \qquad + \Ll(\frac{\rho_0}{\rho_0 + \rho}\Rr) p \cdot \ab(\cu_m, \rho_0) p  + \Ll(\frac{\rho}{\rho_0 + \rho}\Rr)  \Lambda \vert p \vert^2.
\end{align*}
For the first term, we perform an expansion with respect to $\mu_\rho$, and note that $\a(\mu + \mu_\rho,\cdot) - \a(\mu) = 0$ on the event $\{\mu_\rho = 0\}$. Therefore,
\begin{align*}
& \E \Ll[ \frac{1}{(\rho_0 + \rho) \vert \cu_m \vert} \int_{\cu_m} (p + \nabla \phi_{m}(\mu, \cdot)) \cdot (\a(\mu+\mu_\rho, \cdot) - \a(\mu, \cdot)) (p + \nabla \phi_{m}(\mu, \cdot)) \, \d \mu \Rr] \\
&= e^{-\rho \vert \cu_m \vert} \sum_{k=1}^{\infty} \Ll(\frac{(\rho \vert \cu_m \vert)^k}{k!}  \frac{1}{(\rho_0 + \rho) \vert \cu_m \vert}  \Rr.\\
& \quad \Ll. \times \fint_{(\cu_m)^k} \E_{\rho_0} \Ll[\int_{\cu_m} (p + \nabla \phi_{m}(\mu, \cdot)) \cdot \Ll(\a(\mu+ \sum_{i=1}^k \delta_{x_i}, \cdot) - \a(\mu, \cdot)\Rr) (p + \nabla \phi_{m}(\mu, \cdot)) \, \d \mu \Rr] \, \d x_1 \cdots \d x_k \Rr)\\
& \leq \rho \vert \cu_m \vert \Ll( e^{-\rho \vert \cu_m \vert} \sum_{k=1}^{\infty}\frac{(\rho \vert \cu_m \vert)^{(k-1)}}{(k-1)!} \Lambda^2 \vert p \vert^2\Rr) \\
& = \rho \vert \cu_m \vert \Lambda^2 \vert p \vert^2.
\end{align*}
This gives us 
\begin{multline}\label{eq:BoundRightUp}
p \cdot \ab(\cu_m, \rho_0 + \rho) p -  p \cdot \ab(\cu_m, \rho_0) p \\
\leq  \rho \vert \cu_m \vert \Lambda^2 \vert p \vert^2 + \Ll(\frac{\rho}{\rho_0 + \rho}\Rr)  \Lambda \vert p \vert^2 - \Ll(\frac{\rho}{\rho_0 + \rho}\Rr) p \cdot \ab(\cu_m, \rho_0) p.
\end{multline}
Taking $\rho \searrow 0$ we obtain that 
\begin{align}\label{eq:RightUp}
\lim_{\rho \searrow 0} \ab(\cu_m, \rho_0 + \rho) \leq  \ab(\cu_m, \rho_0).
\end{align}

 We now establish that $\lim_{\rho \searrow 0} \ab(\cu_m, \rho_0 + \rho) = \ab(\cu_m, \rho_0)$. To this end, we drop out the part of integration against $\mu_\rho$ and obtain 
\begin{multline}\label{eq:Dropout}
p \cdot \ab(\cu_m, \rho_0 + \rho) p \\
 \geq \frac{\rho_0}{\rho_0 + \rho} \E \Ll[ \frac{1}{\rho_0 \vert \cu_m \vert} \int_{\cu_m} (p + \nabla \phi^{\rho}_{m}) \cdot \a(\mu + \mu_\rho, \cdot) (p + \nabla \phi^{\rho}_{m}) \, \d \mu\Rr].
\end{multline}
We compare this with the following minimization problem, in which we fix $\mathcal{M}_\delta(\R^d)$, $p \in \R^d$ and $U \subseteq \R^d$ a bounded domain, 
\begin{align}\label{eq:QuenchedNu}
\nu(U,p; \mu_\rho) := \inf_{v \in \cH^1_0(U)} \int \Ll( \frac{1}{\rho \vert U \vert} \int_{U} \frac{1}{2} (p + \nabla v) \cdot \a(\mu + \mu_\rho, \cdot) (p + \nabla v) \, \d \mu \Rr) \d \P_{\rho_0}(\mu).
\end{align}
This can always be seen as the problem like \eqref{eq:defNu}, but with a perturbation with a fixed point process $\mu_\rho$. We denote by $\mu \mapsto \phi_{m}(\mu; \mu_\rho) \in \cH^1_0(U)$ its minimizer, and for every fixed $\mu_\rho \in \mathcal{M}_\delta(\R^d)$, $\phi^{\rho}_{m}(\cdot + \mu_\rho)$ is a sub-minimizer for \eqref{eq:QuenchedNu}. Therefore, \eqref{eq:Dropout} gives that 
\begin{multline*}
p \cdot \ab(\cu_m, \rho_0 + \rho) p \\
\geq \frac{\rho_0}{\rho_0 + \rho} \E_{\rho_0} \Ll[ \frac{1}{\rho_0 \vert \cu_m \vert} \int_{\cu_m} (p + \nabla\phi_{m}(\mu; \mu_\rho)) \cdot \a(\mu + \mu_\rho, \cdot) (p + \nabla \phi_{m}(\mu; \mu_\rho)) \, \d \mu\Rr].
\end{multline*}
We perform an expansion with respect to $\mu_\rho$ and notice that, when $\mu_\rho = 0$ the problem \eqref{eq:QuenchedNu} is exactly the same as \eqref{eq:defNu} and ${\phi_{m}(\mu; 0) = \phi_{m}(\mu)}$, so we obtain that 
\begin{align}\label{eq:BoundRightLow}
p \cdot \ab(\cu_m, \rho_0 + \rho) p \geq \Ll(\frac{\rho_0  e^{-\rho \vert \cu_m \vert}}{\rho_0 + \rho}\Rr) p \cdot \ab(\cu_m, \rho_0) p.
\end{align}
This also concludes that 
\begin{align}\label{eq:RightLow}
\lim_{\rho \searrow 0} \ab(\cu_m, \rho_0 + \rho) \geq  \ab(\cu_m, \rho_0).
\end{align}
Combining \eqref{eq:RightLow} and \eqref{eq:RightUp} yields the right continuity of $\ab(\cu_m, \cdot)$.

We also need to verify the left continuity. We define $\rho_1 := \rho_0 + \rho$ which is fixed, then \eqref{eq:BoundRightUp} becomes 
\begin{align*}
p \cdot \ab(\cu_m, \rho_1) p -  p \cdot \ab(\cu_m, \rho_0) p 
\leq  \rho \vert \cu_m \vert \Lambda^2 \vert p \vert^2 + \Ll(\frac{\rho}{\rho_1}\Rr)  \Lambda \vert p \vert^2 - \Ll(\frac{\rho}{\rho_1}\Rr) \vert p \vert^2.
\end{align*}
We let $\rho_1 \nearrow \rho_0$ and obtain that 
\begin{align*}
\ab(\cu_m, \rho_1) \leq \lim_{\rho_0 \nearrow \rho_1}\ab(\cu_m, \rho_0).
\end{align*}
Similarly, we put fixed $\rho_1 = \rho_0 + \rho$ into \eqref{eq:BoundRightLow} and get 
\begin{align*}
p \cdot \ab(\cu_m, \rho_1) p - p \cdot \ab(\cu_m, \rho_0) p \geq \Ll(\frac{\rho_0  e^{-\rho \vert \cu_m \vert}}{\rho_1} - 1\Rr) \Lambda \vert p \vert^2, 
\end{align*}
which means that 
\begin{align*}
\ab(\cu_m, \rho_1) \geq \lim_{\rho_0 \nearrow \rho_1}\ab(\cu_m, \rho_0).
\end{align*}
These prove the left continuity of $\ab(\cu_m, \cdot)$, establishing that $\ab(\cu_m, \cdot)$ is continuous. 

\textit{Step 3: Locally uniform convergence of $c_{k,m}$.} We now turn to the proof of the locally uniform convergence of $\{c_{k,m}\}_{m \in \N}$. Let $K > 0$ and $\rho > 0$. For the case $k=1$, by~\eqref{eq:ExpansionFinite} and~\eqref{eq:ExpansionPosRho}, we find that
\begin{align*}
    \sup_{\rho_0 \in [0,K]} |c_{1,m}(\rho_0) - c_1(\rho_0)| & \leq \frac{1}{\rho} \sup_{\rho_0 \in [0,K]} |R_1(m,\rho_0,\rho)| + O(\rho) \\
    & + \frac{1}{\rho} \sup_{\rho_0 \in [0,K]} | q \cdot (\ab^{-1}(\rho_0 + \rho) - \ab_*^{-1}(\cu_m,\rho_0+\rho) )q| \\
    & + \frac{1}{\rho} \sup_{\rho_0 \in [0,K]} | q \cdot (\ab^{-1}(\rho_0) - \ab_*^{-1}(\cu_m,\rho_0) )q|.
\end{align*}
Using the statement of~Proposition~\ref{prop:SmoothFinite}, the first line on the right-hand side of the previous display is uniformly bounded by a constant $O(\rho)$ independent of $m$ and $\rho_0$, and the locally uniform convergence of $(\ab_*(\cu_m,\cdot))_{m \geq 1}$ towards $\ab$ makes the second line and third line vanish when $m$ tends to infinity. Thus, we obtain 
\begin{align*}
    \limsup_{m\to\infty} \sup_{\rho_0 \in [0,K]} |c_{1,m}(\rho_0) - c_1(\rho_0)| \leq O(\rho).
\end{align*}
Since the left hand side of the display above does not depend on $\rho$, we can let $\rho$ be arbitrarily small, which proves the locally uniform convergence of  $\{c_{1,m}\}_{m \in \N}$. For the case $k \geq 2$, the claim about $\{c_{k,m}\}_{m \in \N}$ follows in the same manner by induction.
\end{proof}

\subsection*{Acknowledgements}
CG was supported by a PhD scholarship from Ecole Polytechnique, and part of this project was developed during his visit at Fudan University. JCM was partially supported by NSF grant DMS-1954357.

\bibliographystyle{abbrv}
\bibliography{KawasakiRef}

\end{document}